\tikzstyle{component}=[rectangle, draw=black, rounded corners, fill=blue!40, drop shadow, text centered, anchor=north, text=white, minimum height=1cm]
\tikzstyle{arrow}=[->, thick]
\pgfplotsset{compat=1.12}
\definecolor{myblue}{RGB}{34,31,217}
\definecolor{mycyan}{gray}{.7}
\definecolor{Gray}{gray}{0.9}
\newtheorem{remark}{Remark}
\newtheorem{theorem}{Theorem}
\newtheorem{corollary}{Corollary}
\newtheorem{definition}{Definition}
\newtheorem{lemma}{Lemma}
\DeclareMathOperator*{\argmax}{argmax}
\DeclareMathOperator*{\argmin}{argmin}
\newcommand{\pref}{\prettyref}
\title{\vspace{-1ex}\LARGE\textbf{Knee Point Identification Based on Trade-Off Utility}\footnote{This manuscript is submitted for potential publication. Reviewers can use this version in peer review.}}
\author[1]{\normalsize Ke Li}
\author[2]{\normalsize Haifeng Nie}
\author[2]{\normalsize Huiru Gao}
\author[3,4]{\normalsize Xin Yao}
\affil[1]{\normalsize Department of Computer Science, University of Exeter, EX4 4QF, Exeter, UK}
\affil[2]{\normalsize College of Computer Science and Engineering, University of Electronic Science and Technology of China, 611731, Chengdu, China}
\affil[3]{\normalsize Shenzhen Key Lab of Computational Intelligence, Department of Computer Science and Engineering, Southern University of Science and Technology, Shenzhen 518055, China}
\affil[4]{\normalsize CERCIA, School of Computer Science, University of Birmingham, Edgbaston, Birmingham, B15 2TT, UK}
\affil[$\ast$]{\normalsize Email: \texttt{k.li@exeter.ac.uk}}
\date{}
\begin{document}
\maketitle

\vspace{-3ex}
{\normalsize\textbf{Abstract: }Knee points, characterised as their smallest trade-off loss at all objectives, are attractive to decision makers in multi-criterion decision-making. In contrast, other Pareto-optimal solutions are less attractive since a small improvement on one objective can lead to a significant degradation on at least one of the other objectives. In this paper, we propose a simple and effective knee point identification method based on trade-off utility, dubbed KPITU, to help decision makers identify knee points from a given set of trade-off solutions. The basic idea of KPITU is to sequentially validate whether a solution is a knee point or not by comparing its trade-off utility with others within its neighbourhood. In particular, a solution is a knee point if and only if it has the best trade-off utility among its neighbours. Moreover, we implement a GPU version of KPITU that carries out the knee point identification in a parallel manner. This GPU version reduces the worst-case complexity from quadratic to linear. To validate the effectiveness of KPITU, we compare its performance with five state-of-the-art knee point identification methods on 134 test problem instances. Empirical results fully demonstrate the outstanding performance of KPITU especially on problems with many local knee points. At the end, we further validate the usefulness of KPITU for guiding EMO algorithms to search for knee points on the fly during the evolutionary process.}

{\normalsize\textbf{Keywords: } }Multi-criterion decision-making, knee point, evolutionary multi-objective optimisation.


\section{Introduction}
\label{Introduction}

The multi-objective optimisation problem (MOP) considered in this paper is formulated as:
\begin{equation}
\begin{aligned}
& \mathrm{minimize} \quad \mathbf{F}(\mathbf{x}) = (f_1(\mathbf{x}),\cdots,f_m(\mathbf{x}))^T  \label{equ:MOOP} \\
& \mathrm{subject \ to} \quad  \mathbf{x} \in \Omega  \\
  \end{aligned},
\end{equation}
where $\mathbf{x} = (x_1,\cdots,x_n)^T$ is a decision (variable) vector (also known as a candidate solution) from the decision (variable) space $\Omega=\Pi_{i=1}^n[a_i,b_i]\subseteq\mathbb{R}^n$. $\mathbf{F}(\mathbf{x})$ constitutes a vector of objective functions which are conflicting to each other in the objective space $\mathbb{R}^m$. In multi-objective optimisation, a solution $\mathbf{x}^1\in\Omega$ is said to be better than $\mathbf{x}^2\in\Omega$ (also known as $\mathbf{x}^1$ dominates $\mathbf{x}^2$, denoted as $\mathbf{x}^1\preceq\mathbf{x}^2$) if and only if $f_i(\mathbf{x}^1)\leq f_i(\mathbf{x}^2)$ for all $i\in\{1,\cdots,m\}$ and $f_i(\mathbf{x}^1)<f_i(\mathbf{x}^2)$ for at least one $i\in\{1,\cdots,m\}$. A solution $\mathbf{x}^{\ast}$ is Pareto optimal in case there does not exist a solution $\mathbf{x}\in\Omega$ that dominates $\mathbf{x}^{\ast}$. The set of all Pareto optimal solutions is called the Pareto set (PS). $ PF = \{\mathbf{F}(\mathbf{x})|\mathbf{x} \in PS\} $ is called the Pareto front (PF).

Due to the population-based property, evolutionary algorithms (EAs) have been widely accepted as a major approach for multi-objective optimisation. In other words, EAs are able to approximate a set of non-dominated solutions simultaneously thus provide decision makers (DMs) a diverse set of trade-off alternatives in decision-making. However, having excessive trade-off alternatives is a double-edged sword because the quality of decision-making will be compromised in the presence of a large number of alternatives~\cite{Jacoby74,LiDY18,Li19,LiCSY19}. This will be further aggravated with the increase of the number of objectives. To alleviate the DMs' cognitive burden, it might be more acceptable to pre-screen the trade-off solution set obtained by an evolutionary multi-objective optimisation (EMO) algorithm and only choose a few trade-off solutions, commonly referred to as \textit{knee} points, before handing over to DMs for multi-criterion decision-making (MCDM).

Generally speaking, one of the key characteristics of the knee point, which makes it intriguing to DMs, is a small improvement on one objective can lead to a significant degradation on at least one of the other objectives. Although knee points are of extreme importance for MCDM, the relevant research is surprisingly lukewarm in the EMO community, comparing to the flourishing developments of various EMO algorithms. In the past two decades, there have been some efforts devoted to the development of effective methods for identifying knee points among a set of trade-off alternatives obtained by an EMO algorithm. Their ideas can be divided into two categories: one is mainly based on the trade-off information collected from pair-wise comparisons among solutions~\cite{DebG11,ChiuYJ16,BhattacharjeeSR17}; whilst the other is according to the geometry characteristics of the approximated PF~\cite{Das99,SchutzeLC08,YuJO18}. It is worth noting that some methods (e.g.,~\cite{DebG11}) were merely designed for the two-objective scenario. In addition, some methods (e.g.,~\cite{ChiuYJ16} and~\cite{BrankeDDO04}) just consider the global information of the approximated PF thus they can only identify one knee point, also known as the global knee point. However, it is not uncommon that there exist more than one knee region. On the other hand, instead of identifying knee points after running an EMO algorithm, there is another line of research that aims to guide the population towards the knee points on the fly during the evolutionary process~\cite{Ikeda2001,BrankeDDO04,RachmawatiS06,RachmawatiS06a,BechikhSG10,BechikhSG11,ZhangTJ15,SudengW15,Ramirez-Atencia17,YuJO19}. Note that most, if not all, of these methods have at least one parameter, the setting of which depends on the shape of the approximated PF. To facilitate the study and understanding of knee points, there are some benchmark test problems with understandable and controllable setup of knee points developed in the literature~\cite{BrankeDDO04,YuJO18,YuJO19a}.

In this paper, we propose a simple and effective knee point identification (KPI) method based on trade-off utility, dubbed KPITU, to help DMs select knee point(s) among a set of trade-off solutions. Our major contributions are outlined as follows. 
\begin{itemize}
    \item The basic idea of KPITU is to sequentially locate knee point(s) based on trade-off utility obtained by pair-wise comparisons among neighbouring solutions. To this end, we first use a set of evenly distributed weight vectors to divide the objective space into several subregions, each of which is associated with solution(s) from the given trade-off solution set. Afterwards, for each solution, we compare it with its neighbouring solutions whilst this solution is a knee point if and only if its trade-off utility is better than other solutions in its neighbourhood.
    \item In addition to the sequential implementation which has a quadratic complexity, we develop a GPU version of KPITU that implements the knee point search at each solution in a parallel manner. As a result, the worst case time complexity of this GPU version of KPITU is linear.
    \item The effectiveness of the proposed KPITU is compared with five state-of-the-art KPI methods on 134 test problem instances. In addition to a visual comparison, we use a metric to carry out a quantitative evaluation of the performance of different KPI methods.
    \item Experimental results demonstrate that KPITU is scalable to any number of objectives. Its performance is comparable to the other five state-of-the-art KPI methods when there is only one global knee point whilst its superiority becomes more evident with the increase of the number of knee points.
    \item Last but not the least, KPITU is incorporated into the environmental selection of NSGA-II~\cite{DebAPM02} to demonstrate its usefulness as an operator to guide an EMO algorithm to search for knee points on the fly during the evolutionary process.
\end{itemize}

The rest of this paper is organised as follows. \pref{sec:literature} provides a literature review on the existing developments related to knee points. \pref{sec:proposal} describes the implementation of our proposed KPITU for finding knee points. \pref{sec:experiments} compares and analyses the performance of our proposed KPITU against five state-of-the-art algorithms. Finally, \pref{sec:conclusion} concludes this paper and shed some lights on future directions.


\section{Related Works}
\label{sec:literature}

\begin{figure*}[htbp]
    \centering
    \subfloat[KPI method based on cone-domination.]{\includegraphics[width=.33\linewidth]{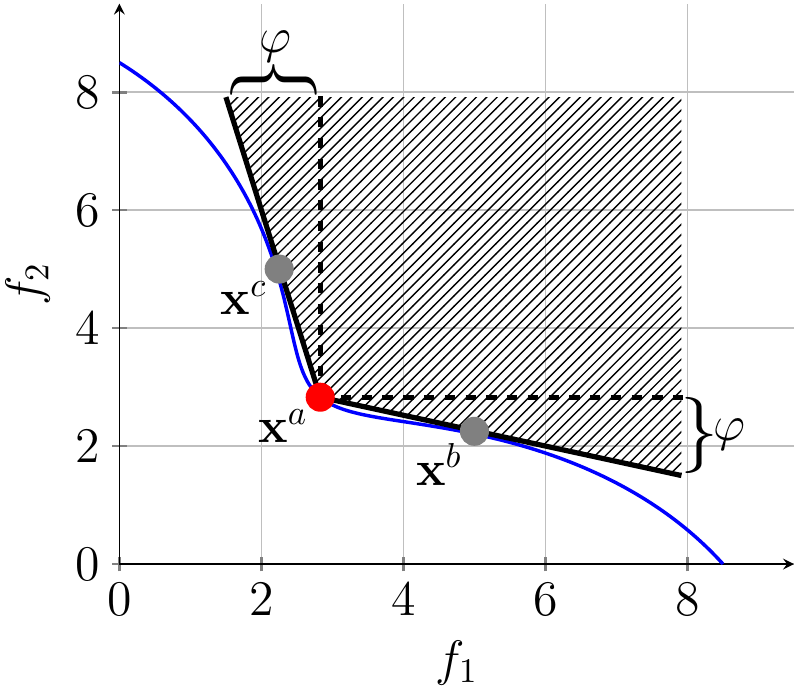}}
    \subfloat[KPI method based on EMU.]{\includegraphics[width=.33\linewidth]{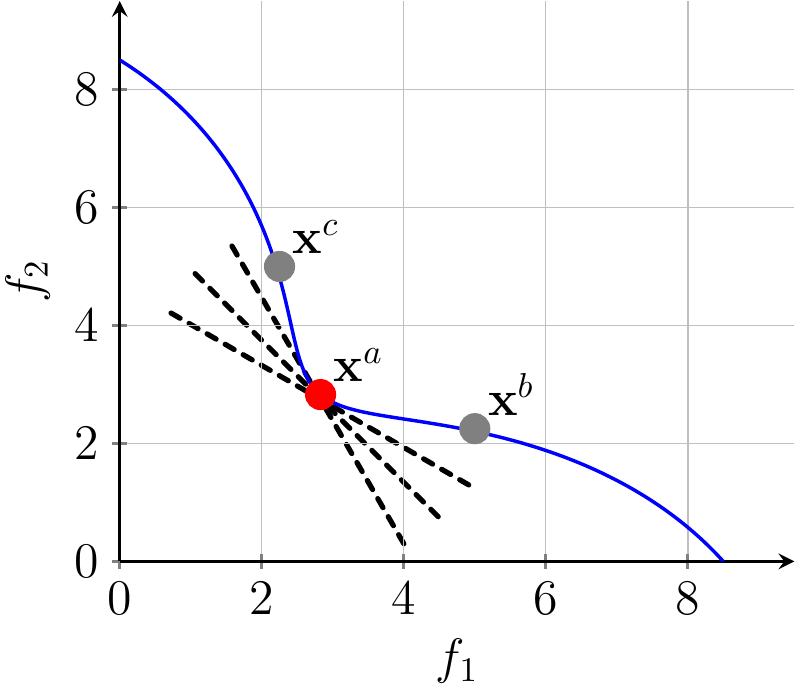}}
    \subfloat[KPI method based on hyperplane.]{\includegraphics[width=.33\linewidth]{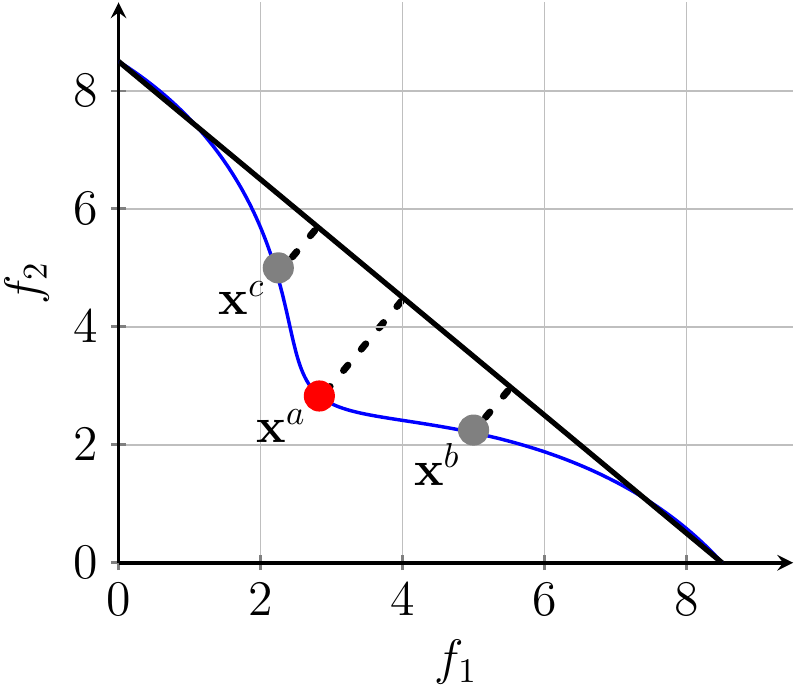}}
    \caption{Illustration of working mechanisms of different KPI methods.}
    \label{fig:knee_methods}
\end{figure*}

In this section, we provide a systematic overview of the existing developments of KPI either posteriori or progressively during the evolutionary process. Our literature review is organised according to the criteria used to identify knee point(s), i.e., trade-off information and geometry characteristics.

\subsection{KPI Based on Trade-Off Information}
\label{sec:trade-off}

One of the key characteristics of knee point is its large trade-off offset at different objectives. Thus, it is nature to take advantages of the trade-off information to find knee point(s).

\subsubsection{Approaches Based on Modified Dominance Relation}
\label{sec:domination}

The original Pareto dominance is a strict partial order relation. It implies an equal importance and the same amount of trade-off at all objectives thus entails no preference at any objective. Considering the example shown in~\pref{fig:knee_methods}(a), $\mathbf{x}^a$ is the knee point. However, $\mathbf{x}^a$, $\mathbf{x}^b$ and $\mathbf{x}^c$ will be treated as equally good according to the Pareto dominance relation. Therefore, there is no selection pressure to guide the search towards the knee point. A natural idea to amend this is to relax the dominance conditions by amending extra trade-off rates to different objectives. For example, Ram\'irez-Atencia et al. proposed a cone-domination~\cite{Ramirez-Atencia17} where a weighted function of the objectives is defined as:
\begin{equation}
    \mathbf{\Omega}(\mathbf{x})=
\begin{bmatrix}
    1 & a_{12} & \dots & a_{1m} \\
    a_{21} & 1 & \dots  & a_{2m} \\
    \vdots & \vdots & \ddots & \vdots \\
    a_{m1} & a_{m2} & \dots  & 1
\end{bmatrix}\cdot\mathbf{F}(\mathbf{x}),
\end{equation}
where $a_{ij}$, both $i$ and $j\in\{1,\cdots,m\}$, is the amount of gain in the $j$-th objective function for a loss of one unit in the $i$-th objective function. By leveraging this definition, the cone-domination is simply defined by replacing the original objective functions $\mathbf{F}(\mathbf{x})$ by these weighted functions $\mathbf{\Omega}(\mathbf{x})$. As demonstrated in~\pref{fig:knee_methods}(a), the effect of such a relaxed dominance relation can be understood as an extension of the coverage of dominance area to an angle $\varphi$ where $\tan\frac{\varphi-90}{2}=a_{ij}$. By doing so, we can see that $\mathbf{x}^b$ and $\mathbf{x}^c$ are cone-dominated by $\mathbf{x}^a$ thus this cone-domination can provide a necessary selection pressure to drive the search towards knee point(s) with prescribed trade-off offsets at different objectives. Similar ideas were also explored in~\cite{SudengW15} and~\cite{YuJO19}, respectively.

\subsubsection{Approaches Based on Utility Function}
\label{sec:utility}

Utility function, or as known as scalarising function, have been widely used in the MCDM community to aggregate a MOP into a single-objective optimisation problem. Trade-off among different objectives is usually represented as a weight vector when used in a utility function. For example, Branke et al.~\cite{BrankeDDO04} proposed an individual marginal utility function:
\begin{equation}
    \hat{U}(\mathbf{x}_i,\lambda)=
    \begin{cases}
        \min\limits_{j\neq i}U(\mathbf{x}_j,\lambda)-U(\mathbf{x}_i,\lambda), & \text{if } i=\argmin U(\mathbf{x}_j,\lambda) \\
        0, & \text{otherwise}
    \end{cases},
\end{equation}
where $\lambda\in[0,1]$ is a weight vector specified by the DM to elicit the trade-off among objectives and $U(\mathbf{x},\lambda)=\lambda f_1(\mathbf{x})+(1-\lambda)f_2(\mathbf{x})$ in the two-objective case as an example. Based on this definition, the expected marginal utility (EMU) is calculated as an integral of the individual marginal utility function across a set of different weight vectors. Solution(s) having the largest EMU is(are) recognised as the knee point(s). In other words, the knee point is the solution that finds the minimum value of the utility function for the largest number of different weight vectors as the example shown in~\pref{fig:knee_methods}(b). In particular, dash lines represent the contours of utility functions with different  different weight vector settings. It is not difficult to envisage that the EMU method is scalable to problems with a large number of objectives. However, as discussed in~\cite{BhattacharjeeSR17}, the EMU becomes less discriminative with the increase of the number of objectives. Accordingly, Bhattacharjee et al. proposed an improved version of EMU, dubbed EMU$^r$, that is not only able to identify knee point(s) but also can discriminate their positions (i.e., whether they are on the internal or peripheral part of the PF). In~\cite{RachmawatiS06a} and \cite{RachmawatiS09}, Rachmawati and Srinivasan proposed a metric $\mu(\mathbf{x}^i,\mathrm{S})$ to evaluate the value of a solution $\mathbf{x}^i$, in terms of trade-off performance, with respect to the trade-off solution set $\mathrm{S}$ as:
\begin{equation}
    \mu(\mathbf{x}^i,S)=\min\limits_{\mathbf{x}^j\in\mathrm{S},\mathbf{x}^i\npreceq\mathbf{x}^j,\mathbf{x}^j\npreceq\mathbf{x}^i}T(\mathbf{x}^i,\mathbf{x}^j),
\end{equation}
where
\begin{equation}
    T(\mathbf{x}^i,\mathbf{x}^j)=\frac{\sum_{i=1}^m\max[0,\frac{f_i(\mathbf{x}^j)-f_i(\mathbf{x}^i)}{f_i^{\max}-f_i^{\min}}]}{\sum_{i=1}^m\max[0,\frac{f_i(\mathbf{x}^i)-f_i(\mathbf{x}^j)}{f_i^{\max}-f_i^{\min}}]},
\end{equation}
where $f_i^{\max}$ and $f_i^{\min}$ are the maximum and the minimum values at the $i$-th objective function of $\mathrm{S}$. $\mu(\mathbf{x}^i,\mathrm{S})$ evaluates the least amount of improvement per unit deterioration obtained by exchanging any alternative solution $\mathbf{x}^j$ in $\mathrm{S}$ with $\mathbf{x}^i$. In particular, the solution $\mathbf{x}^i$ is recognised as a knee point in case $\mu(\mathbf{x}^i,\mathrm{S})$ is the local maximum over $\mathrm{S}$. In~\cite{BechikhSG11}, Bechikh et al. applied this metric in the environmental selection of an EMO algorithm to guide the search towards knee point(s).

\subsection{KPI Based on Geometry Characteristics}
\label{sec:geomety}

Different from the other part of the PF, the knee region, where knee point(s) locate, has a distinctive geometry characteristic, i.e., it incurs an abrupt change in the curvature of the PF manifold. This characteristic has also been widely used to find knee point(s).

\subsubsection{Approaches Based on Angle Selection}
\label{sec:angle}

In~\cite{BrankeDDO04}, Branke et al. proposed a reflex angle based selection mechanism to replace the crowding distance calculation in the original NSGA-II. Specifically, the reflex angle is defined as the angle formed by two lines that cover the neighbouring area of the underlying solution. As for a given set of trade-off solutions, the one with the largest reflex angle is recognised as the knee point. In fact, the basic idea of the reflex angle is geometrically similar to the effect of the cone-domination discussed in~\pref{fig:knee_methods}(a). Since the reflex angle only take the local information of the PF into consideration, Deb and Gupta proposed a bend angle based selection mechanism to remedy this issue~\cite{DebG11}. Note that both the reflex angle and bend angle based selection mechanisms assume a normalised objective space, which can hardly be met in real-world applications. Deb and Gupta~\cite{DebG11} proposed a $(\alpha,\beta)$-selection that takes user prescribed trade-off information into consideration. By doing so, the selection pressure can be adjusted according to the scales of different objectives.
\vspace{-.19em}

\subsubsection{Approaches Based on Hyperplane}
\label{sec:hyperplane}

In order to identify the maximum convex bulge at the PF, Das~\cite{Das99} proposed to first setup a convex hull of individual minima (CHIM), i.e., a hyperplane formed by the individual minimum at each objective in the normalised objective space as shown in~\pref{fig:knee_methods}(c). Given a trade-off solution set, the solution having the largest perpendicular distance to this hyperplane is recognised as the knee point. Based on the same idea, Sch\"utze et al. proposed two improved strategies to find the maximal convex bulge, i.e., knee point(s)~\cite{SchutzeLC08}. In~\cite{BechikhSG10} and~\cite{ZhangTJ15}, the environmental selection of an EMO algorithm is guided by the \lq knee points\rq\ of the current approximated PF. In particular, the \lq knee points\rq\ are identified by using the method proposed by Das. Instead of calculating the perpendicular distance towards the hyperplane, Yu et al.~\cite{YuJO18} proposed to first project the trade-off solutions onto the hyperplane. Afterwards, the knee point(s) are identified according to the density of those projected solutions on the hyperplane. In~\cite{ChiuYJ16}, Chiu et al. proposed to use the minimum Manhattan distance (MMD) to a hyperplane to identify knee point(s) from the given trade-off solution set $\mathrm{S}$. Specifically,
\begin{equation}\label{eq:mmd}
	\mathbf{x}^{\ast}=\argmin_{\mathbf{x}\in\mathrm{S}}\|\mathbf{F}(\mathbf{x})-\mathbf{z}^{\ast}\|_1,
\end{equation}
where $\mathbf{x}^{\ast}$ is the knee point, $\mathbf{z}^{\ast}=(z_1^{\ast},\cdots,z_m^{\ast})$ is the ideal point where $z_i^{\ast}=\min\limits_{\mathbf{x}\in\mathrm{S}}f_i(\mathbf{x})$, $i\in\{1,\cdots,m\}$ and $\|\cdot\|_1$ is the Manhattan distance. Different from the other methods, the hyperplane used in the MMD is formed by $\mathbf{z}^{\ast}$ with a prescribed orientation, i.e., the predefined importance of different objectives. Different from the aforementioned methods, Bhattacharjee et al.~\cite{BhattacharjeeSR16} proposed a method to estimate the local curvature of the underlying PF shape within a neighborhood to identify knee points. Instead of calculating the distance towards a hyperplane, this method uses a general format of a hyperplane to facilitate the local curvature estimation.

\subsection{Remarks on Existing Methods}
\label{sec:comments}

In the following paragraphs, we make some remarks on the aforementioned KPI methods.
\begin{remark}
    Some KPI methods (e.g.,~\cite{DebG11,SchutzeLC08} and \cite{BrankeDDO04}) are merely designed for the two-objective scenarios and are not scalable to higher dimensional cases. It is controversial to use computational methods to identify knee point(s) in the two-objective case where the PF manifold can be easily visualised to assist the decision-making.
\end{remark}
\begin{remark}
    Most, if not all, KPI methods are designed for problems with a convex PF shape given that the knee point(s) are presumably lying in the convex bulge. Although the KPI methods proposed in~\cite{BhattacharjeeSR17} and~\cite{YuJO18} are supposed to identify knee point(s) on problems with both convex and concave PF shapes, their performance depend on some parameters.
\end{remark}
\begin{remark}
    Followed the previous issue, many existing KPI methods have at least one control parameter, the setting of which is problem dependent. Unfortunately, there is no thumb rule to set those parameter(s) in order to control the behaviour of the corresponding KPI methods, including the number of knee regions identified (e.g.,~\cite{BhattacharjeeSR17} and~\cite{YuJO18}) and the selection pressure towards the knee points (e.g.,~\cite{SudengW15,Ramirez-Atencia17,YuJO19}).
\end{remark}
\begin{remark}
    Some KPI methods have an assumption of only one global knee point (e.g.,~\cite{ChiuYJ16,DebG11} and~\cite{BrankeDDO04}). However, it is not uncommon that there exist more than one knee region for problems with complex PF shapes.
\end{remark}
Bearing these aforementioned issues in mind, this paper proposes a simple and effective KPI method based on trade-off utility. In particular, the trade-off utility measures the trade-off offsets of a pair of non-dominated solutions at different objectives. The proposed KPITU is parameterless and is able to identify as many knee point(s) as possible on problems with various PF shapes and is scalable to any number of objectives.

\section{Proposed Algorithm}
\label{sec:proposal}

This section delineates the technical details of our proposed KPITU to identify the knee point(s) of a given trade-off solution set $\mathrm{S}=\{\mathbf{x}^i\}_{i=1}^N$. First, we start with the foundation to understand knee point from a trade-off perspective, i.e., trade-off utility which is the main crux of KPITU. Thereafter, we explain the algorithmic implementation of KPITU step by step. At the end, we explore a GPU implementation of KPITU that accelerates the KPI process in a parallel manner.

\subsection{Understanding Knee Point from a Trade-Off Perspective}
\label{sec:TOU}

Trade-off is one of the key characteristics of a pair of non-dominated solutions. It is characterised as an advantage at one objective accompanied by an inadequacy of at least one other objective. On one hand, trade-off is the source of incomparability among non-dominated solutions. On the other hand, trade-off can be accountable for decision-making. Specifically, given a pair of non-dominated solutions, the one whose net gain of improvement in advantageous objectives exceeds the deterioration in inferior objectives is preferred by the DM. Let us consider the example shown in~\pref{fig:knee_methods} again, a DM will prefer $\mathbf{x}^a$ over $\mathbf{x}^b$ and $\mathbf{x}^c$ without any \textit{a priori} preference information on the underlying problem, e.g., the importance of different objectives. In other words, $\mathbf{x}^a$ is more likely to be a knee point than $\mathbf{x}^b$ and $\mathbf{x}^c$. Conceptually, the knee point is the one that has the largest improvement versus deterioration rate. Based on this intuition, we define the trade-off utility as follows.

\begin{definition}\label{def:knee_dominance}
    Given two non-dominated solutions $\mathbf{x}^1$ and $\mathbf{x}^2$ of a trade-off solution set $\mathrm{S}$, the trade-off utility of $\mathbf{x}^1$ over $\mathbf{x}^2$ is defined as: 
    \begin{equation}
		\mathbb{U}(\mathbf{x}^1,\mathbf{x}^2)=\mathbb{G}(\mathbf{x}^1,\mathbf{x}^2)+\mathbb{D}(\mathbf{x}^1,\mathbf{x}^2),
        \label{eq:tradeoff_utility}
    \end{equation}
    where
    \begin{equation}\label{eq:gain}
        \mathbb{G}(\mathbf{x}^1,\mathbf{x}^2)=\sum_{i=1}^m[\frac{f_i(\mathbf{x}^1)-f_i(\mathbf{x}^2)}{z^{\max}_i-z^{\min}_i}]^+,
    \end{equation}
    where $z^{\max}_i=\max\limits_{\mathbf{x}\in\mathrm{S}}f_i(\mathbf{x})$ and $z^{\min}_i=\min\limits_{\mathbf{x}\in\mathrm{S}}f_i(\mathbf{x})$, $i\in\{1,\cdots,m\}$ and
    \begin{equation}
        [x]^+=
        \begin{cases}
            x, & \text{if } x<0\\
            0, & \text{otherwise}
        \end{cases},
    \end{equation}
    and
    \begin{equation}\label{eq:loss}
        \mathbb{D}(\mathbf{x}^1,\mathbf{x}^2)=\sum_{i=1}^m[\frac{f_i(\mathbf{x}^1)-f_i(\mathbf{x}^2)}{z^{\max}_i-z^{\min}_i}]^-,
    \end{equation}
    where 
    \begin{equation}
        [x]^-=
        \begin{cases}
            x, & \text{if } x>0\\
            0, & \text{otherwise}
        \end{cases}.
    \end{equation}
\end{definition} 
In particular, $\mathbb{G}(\mathbf{x}^1,\mathbf{x}^2)$ represents the net gain of improvement of $\mathbf{x}^1$ over $\mathbf{x}^2$ in the advantageous objectives whilst $\mathbb{D}(\mathbf{x}^1,\mathbf{x}^2)$ evaluates the deterioration of $\mathbf{x}^1$ with respect to $\mathbf{x}^2$ in the inferior objectives. Based on the trade-off utility, we have the knee-dominance concept as follows. 
\begin{definition}\label{def:knee_dominance}
    Given two non-dominated solutions $\mathbf{x}^1$ and $\mathbf{x}^2$, $\mathbf{x}^1$ is defined to knee-dominate $\mathbf{x}^2$, denoted as $\mathbf{x}^1\preceq_k\mathbf{x}^2$, in case $\mathbb{U}(\mathbf{x}^1,\mathbf{x}^2)<0$; $\mathbf{x}^1$ is defined to be non-knee-dominated with $\mathbf{x}^2$, denoted as $\mathbf{x}^1\cong_k\mathbf{x}^2$, in case $\mathbb{U}(\mathbf{x}^1,\mathbf{x}^2)=0$; otherwise, $\mathbb{U}(\mathbf{x}^1,\mathbf{x}^2)>0$.
\end{definition}
The basic idea of the knee-dominance is to provide the ability to discriminate non-dominated solutions from a trade-off perspective. In the following paragraphs, we will discuss some interesting properties of this knee-dominance relation.

\vspace{0.5em}
\noindent\textbf{Property 1.} \textit{The knee-dominance is an irreflexive relation on the given trade-off solution set $\mathrm{S}$.}
\begin{proof}
    We would like to show that $\forall\mathbf{x}\in\mathrm{S}, \mathbf{x}\npreceq_k\mathbf{x}$.
    
    Because $\mathbb{U}(\mathbf{x},\mathbf{x})=\mathbb{G}(\mathbf{x},\mathbf{x})+\mathbb{D}(\mathbf{x},\mathbf{x})=0$, $\mathbf{x}\cong_k\mathbf{x}$. Thus, knee-dominance is irreflexive.
\end{proof}
\noindent\textbf{Property 2.} \textit{The knee-dominance is an asymmetric relation on the given trade-off solution set $\mathrm{S}$.}
\begin{proof} 
    We would like to show that if $\mathbf{x}^1\preceq_k\mathbf{x}^2$ then $\mathbf{x}^2\npreceq_k\mathbf{x}^1$.

    If $\mathbf{x}^1\preceq_k\mathbf{x}^2$ then we have $\mathbb{U}(\mathbf{x}^1,\mathbf{x}^2)=\mathbb{G}(\mathbf{x}^1,\mathbf{x}^2)+\mathbb{D}(\mathbf{x}^1,\mathbf{x}^2)<0$. It is easy to verify that $\mathbb{G}(\mathbf{x}^1,\mathbf{x}^2)=-\mathbb{D}(\mathbf{x}^2,\mathbf{x}^1)$ and $\mathbb{D}(\mathbf{x}^1,\mathbf{x}^2)=-\mathbb{G}(\mathbf{x}^2,\mathbf{x}^1)$. Thus, we have $\mathbb{U}(\mathbf{x}^2,\mathbf{x}^1)=\mathbb{G}(\mathbf{x}^2,\mathbf{x}^1)+\mathbb{D}(\mathbf{x}^2,\mathbf{x}^1)=-[\mathbb{G}(\mathbf{x}^1,\mathbf{x}^2)+\mathbb{D}(\mathbf{x}^1,\mathbf{x}^2)]>0$. That is to say, $\mathbf{x}^2\npreceq_k\mathbf{x}^1$.
\end{proof} 
\noindent\textbf{Property 3.} \textit{The knee-dominance is a transitive relation on the given trade-off solution set $\mathrm{S}$.}
\begin{proof}
    We would like to show that if $\mathbf{x}^1\preceq_k\mathbf{x}^2$ and $\mathbf{x}^2\preceq_k\mathbf{x}^3$, then $\mathbf{x}^1\preceq_k\mathbf{x}^3$.

	According to~\pref{def:knee_dominance}, we can rewrite $\mathbb{U}(\mathbf{x}^1,\mathbf{x}^2)=\sum_{i=1}^m[\frac{f_i(\mathbf{x}^1)-f_i(\mathbf{x}^2)}{z^{\max}_i-z^{\min}_i}]$. Since we have $\mathbb{U}(\mathbf{x}^1,\mathbf{x}^2)=\sum_{i=1}^m[\frac{f_i(\mathbf{x}^1)-f_i(\mathbf{x}^2)}{z^{\max}_i-z^{\min}_i}]<0$ and $\mathbb{U}(\mathbf{x}^2,\mathbf{x}^3)=\sum_{i=1}^m[\frac{f_i(\mathbf{x}^2)-f_i(\mathbf{x}^3)}{z^{\max}_i-z^{\min}_i}]<0$, we can derive that $\mathbb{U}(\mathbf{x}^1,\mathbf{x}^3)=\sum_{i=1}^m[\frac{f_i(\mathbf{x}^1)-f_i(\mathbf{x}^2)+f_i(\mathbf{x}^2)-f_i(\mathbf{x}^3)}{z^{\max}_i-z^{\min}_i}]<0$. As a result, $\mathbf{x}^1\preceq_k\mathbf{x}^3$.
\end{proof} 
Note that $\mathbf{x}^1\preceq_k\mathbf{x}^2$ means $\mathbf{x}^1$ has a larger chance to become a knee point than $\mathbf{x}^2$. In other words, a DM prefers $\mathbf{x}^1$ over $\mathbf{x}^2$ in decision-making. Based on the knee-dominance concept, we have the following theorem to support the identification of the knee point.

\begin{theorem}
    Given a set of trade-off solution set $\mathrm{S}$, $\mathbf{x}^{\ast}\in\mathrm{S}$ is the knee point if and only if there does not exist another solution $\mathbf{x}^{\prime}\in\mathrm{S}$ that $\mathbf{x}^{\prime}\preceq_k\mathbf{x}^{\ast}$.
    \label{theorem:global_knee}
\end{theorem}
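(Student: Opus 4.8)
The plan is to show that the knee-dominance relation, despite its piecewise definition through $\mathbb{G}$ and $\mathbb{D}$, collapses to the order induced by a single linear scalarising function; once this is in hand, the theorem is essentially the tautology \emph{``a point is a minimiser exactly when nothing beats it''}.

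First I would record the linearisation identity. For any coordinate $i$ the normalised gap $\delta_i=\frac{f_i(\mathbf{x}^1)-f_i(\mathbf{x}^2)}{z^{\max}_i-z^{\min}_i}$ is either negative (contributing $\delta_i$ to $\mathbb{G}$ and $0$ to $\mathbb{D}$), positive (the reverse), or zero (contributing $0$ to both), so in every case its two contributions sum to $\delta_i$. Hence $\mathbb{U}(\mathbf{x}^1,\mathbf{x}^2)=\mathbb{G}(\mathbf{x}^1,\mathbf{x}^2)+\mathbb{D}(\mathbf{x}^1,\mathbf{x}^2)=\sum_{i=1}^m\delta_i$, exactly the collapse already exploited in the proof of Property~3. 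Introducing $g(\mathbf{x}):=\sum_{i=1}^m\frac{f_i(\mathbf{x})-z^{\min}_i}{z^{\max}_i-z^{\min}_i}$, the $z^{\min}_i$ terms cancel in the difference and we obtain $\mathbb{U}(\mathbf{x}^1,\mathbf{x}^2)=g(\mathbf{x}^1)-g(\mathbf{x}^2)$, so that $\mathbf{x}^1\preceq_k\mathbf{x}^2$ iff $g(\mathbf{x}^1)<g(\mathbf{x}^2)$. In passing this re-derives Properties~1--3 in one stroke, since any real-valued map induces an irreflexive, asymmetric, transitive strict order. I would note here that $g$ is well defined: every pair in a trade-off set $\mathrm{S}$ is mutually non-dominated (so $\preceq_k$ is defined on all of $\mathrm{S}$), and for a non-degenerate $\mathrm{S}$ with $|\mathrm{S}|\ge 2$ one has $z^{\max}_i>z^{\min}_i$.

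Next I would invoke the working notion of the (global) knee point: $\mathbf{x}^{\ast}$ is the solution of $\mathrm{S}$ with the smallest overall normalised trade-off loss, i.e. $g(\mathbf{x}^{\ast})=\min_{\mathbf{x}\in\mathrm{S}}g(\mathbf{x})$. The two implications are then immediate. If $\mathbf{x}^{\ast}$ is the knee point, then $g(\mathbf{x}')\ge g(\mathbf{x}^{\ast})$ for every $\mathbf{x}'\in\mathrm{S}$, hence $\mathbb{U}(\mathbf{x}',\mathbf{x}^{\ast})=g(\mathbf{x}')-g(\mathbf{x}^{\ast})\ge 0$, so no $\mathbf{x}'$ knee-dominates $\mathbf{x}^{\ast}$. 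Conversely, if no $\mathbf{x}'\in\mathrm{S}$ satisfies $\mathbf{x}'\preceq_k\mathbf{x}^{\ast}$, then $\mathbb{U}(\mathbf{x}',\mathbf{x}^{\ast})\ge 0$, i.e. $g(\mathbf{x}')\ge g(\mathbf{x}^{\ast})$, for all $\mathbf{x}'\in\mathrm{S}$; thus $\mathbf{x}^{\ast}$ attains the minimum of $g$ over $\mathrm{S}$ and is the knee point.

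I do not anticipate a serious obstacle: essentially all of the content lives in the linearisation step, and the remainder is the triviality that an element is a minimiser iff nothing is strictly smaller. The points that do require care are bookkeeping ones — making explicit that knee-dominance is only defined on mutually non-dominated pairs (automatic for a trade-off set), and that ties in $g$ are a genuine degeneracy: if several solutions share the minimal value of $g$ then none of them knee-dominates another, so the statement is best read as characterising the knee point(s) up to this tie, with uniqueness holding precisely when $g$ is injective on $\mathrm{S}$.
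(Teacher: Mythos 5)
Your proposal is correct, and it takes a genuinely different route from the paper's own argument. The paper's proof is a two-sentence contradiction argument in one direction only: it assumes some $\mathbf{x}''\preceq_k\mathbf{x}^{\ast}$ exists, reads this (via the informal gloss immediately preceding the theorem) as ``$\mathbf{x}''$ has a larger chance to become a knee point than $\mathbf{x}^{\ast}$,'' and declares a contradiction with $\mathbf{x}^{\ast}$ being \emph{the} knee point; the converse implication is not addressed, and no formal definition of ``knee point'' is ever supplied, so the theorem functions essentially as a definition in disguise. You instead make the hidden structure explicit: the identity $[\delta_i]^++[\delta_i]^-=\delta_i$ collapses $\mathbb{U}(\mathbf{x}^1,\mathbf{x}^2)$ to $g(\mathbf{x}^1)-g(\mathbf{x}^2)$ with $g(\mathbf{x})=\sum_{i=1}^m\frac{f_i(\mathbf{x})-z_i^{\min}}{z_i^{\max}-z_i^{\min}}$, which the paper only uses piecemeal (in the proofs of Property~3 and Lemma~1) without drawing the conclusion that $\preceq_k$ is the strict order induced by a single linear scalarisation. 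Once the knee point is pinned down as the minimiser of $g$ over $\mathrm{S}$, both directions follow trivially, and your remark about ties correctly identifies the only failure mode of the ``the'' in the statement. What your approach buys is rigour, both implications, and a structural insight the paper arrives at only empirically: since $g$ is (up to the constant $\sum_i z_i^{\min}/(z_i^{\max}-z_i^{\min})$) the $\ell_1$ norm of the normalised objective vector, the global knee point of Theorem~1 coincides with the MMD/CHIM selection of equations~(\ref{eq:new_chim}) and~(\ref{eq:new_mmd}), which explains the identical behaviour of KPITU, CHIM and MMD observed in Table~\ref{tab:global_knee_results}. What it costs is that you must commit to a formal definition of ``knee point'' that the paper deliberately leaves informal; given the paper's own proof leans on exactly the same unstated notion, this is a feature rather than a gap.
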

\begin{proof}
    This theorem can be proved by contradiction. If $\exists\mathbf{x}^{''}\in \mathrm{S}$ such that $\mathbf{x}^{''}\preceq_k\mathbf{x}^{\ast}$. Then $\mathbf{x}^{''}$ has a larger chance to become a knee point $\mathbf{x}^{\ast}$. This contradicts the assertion that $\mathbf{x}^{\ast}$ is the knee point.
\end{proof}
\begin{remark}
    It is worth noting that~\pref{theorem:global_knee} is valid for the problem that has only one global knee point. As discussed in some recent studies (e.g.,~\cite{YuJO19a} and~\cite{BhattacharjeeSR16}), it is not uncommon that a problem has more than one knee point, as known as local knee points. In this case, \pref{theorem:global_knee} does not work appropriately for identifying all local knee points. Considering the example shown in~\pref{fig:subregion}(a), both $\mathbf{x}^1$ and $\mathbf{x}^2$ are knee points of the underlying PF. However, since $\mathbb{U}(\mathbf{x}^3,\mathbf{x}^2)=\mathbb{G}(\mathbf{x}^3,\mathbf{x}^2)+\mathbb{D}(\mathbf{x}^3,\mathbf{x}^2)<0$, we have $\mathbf{x}^3\preceq_k\mathbf{x}^2$ whereas $\mathbf{x}^3$ is obviously not a knee point.
\end{remark}

To enable the identification of local knee points, we restrict the comparison of knee-dominance within the neighbourhood of each solution in $\mathrm{S}$. Henceforth, the first (probably the most important) hassle is the setup of an appropriate neighbourhood. In this paper, we borrow the idea from MOEA/D-M2M~\cite{LiuGZ14}, a decomposition-based EMO algorithm, to use a set of evenly distributed weight vectors $\mathrm{W}=\{\mathbf{w}^i\}_{i=1}^{\hat{N}}$, generated by the Das and Dennis' method~\cite{DasD98}, to divide the objective space into several subregions $\Upsilon=\{\Delta^i\}_{i=1}^{\hat{N}}$ where
\begin{equation}
    \Delta^i=\{\mathbf{F}(\mathbf{x})\in\mathbb{R}^m|\langle\mathbf{F}(\mathbf{x}),\mathbf{w}^i\rangle\leq\langle\mathbf{F}(\mathbf{x}),\mathbf{w}^j\rangle\},
\end{equation} 
where $j\in\{1,\cdots,\hat{N}\}$ and $\langle\mathbf{F}(\mathbf{x}),\mathbf{w}\rangle$ is the acute angle between $\mathbf{F}(\mathbf{x})$ and $\mathbf{w}$. In particular, $\hat{N}\leq N$ is the largest possible number of weight vectors that can be generated by the Das and Dennis' method. Based on the closeness of weight vectors, we can build the neighborhood for each subregion.
\begin{definition}\label{def:neighour_subregion}
    For each subregion $\Delta^i$, $i\in\{1,\cdots,\hat{N}\}$, its neighborhood is defined as:
    \begin{equation}
        \Omega^i=\{\Delta^k|k=\argmin_{j\in\{1,\cdots,\hat{N}\}}\langle\mathbf{w}^i,\mathbf{w}^j\rangle\}.
    \end{equation}
\end{definition}
\noindent As the example shown in~\pref{fig:subregion}(b), $\Omega^2=\{\Delta^1,\Delta^2,\Delta^3\}$. After the setup of subregions and their neighborhoods, each solution $\mathbf{x}$ of $\mathrm{S}$ is associated with a unique subregion whose index is determined as:
\begin{equation}
    k=\argmin\limits_{i\in\{1,\cdots,\hat{N}\}}\langle\mathbf{\overline{F}}(\mathbf{x}),\mathbf{w}^i\rangle,
    \label{eq:association}
\end{equation}
where $\mathbf{\overline{F}}(\mathbf{x})$ is the normalized objective vector of $\mathbf{x}$, and its $i$-th objective function is calculated as:
\begin{equation}
    \overline{f}_i(\mathbf{x})=\frac{f_i(\mathbf{x})-z^{\ast}_i}{z^{\max}_i-z_i^{\min}},
\end{equation}
where $i\in\{1,\cdots,m\}$. After associating solutions with subregions, we can build the neighbourhood of each solution.
\begin{definition}\label{def:neighbor_sol}
    For each solution $\mathbf{x}^i$, $i\in\{1,\cdots,N\}$, its neighbourhood is defined as:
    \begin{equation}
        \Psi^i=\{\mathbf{x}|\mathbf{x}\in\Delta\ \text{where}\ \Delta\in\Omega^j\ \text{and}\ \mathbf{x}\neq\mathbf{x}^i\},
    \end{equation}
    where $j$ is the index of the subregion with which $\mathbf{x}^i$ is associated.
\end{definition}

\begin{figure}[htbp]
    \centering
    \subfloat[Counter example of \pref{theorem:global_knee}.]{\includegraphics[width=.4\linewidth]{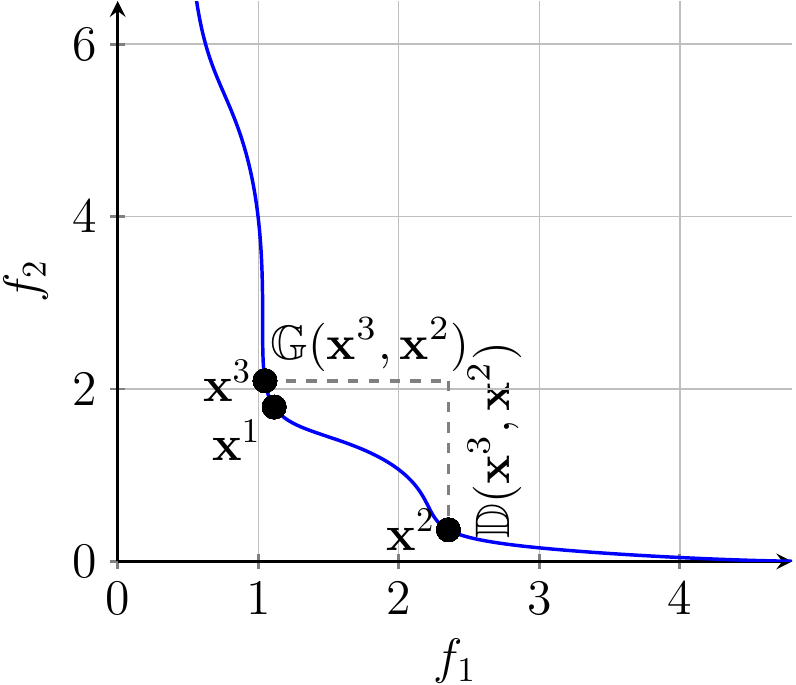}}
    \subfloat[Illustraion of neighborhood.]{\includegraphics[width=.4\linewidth]{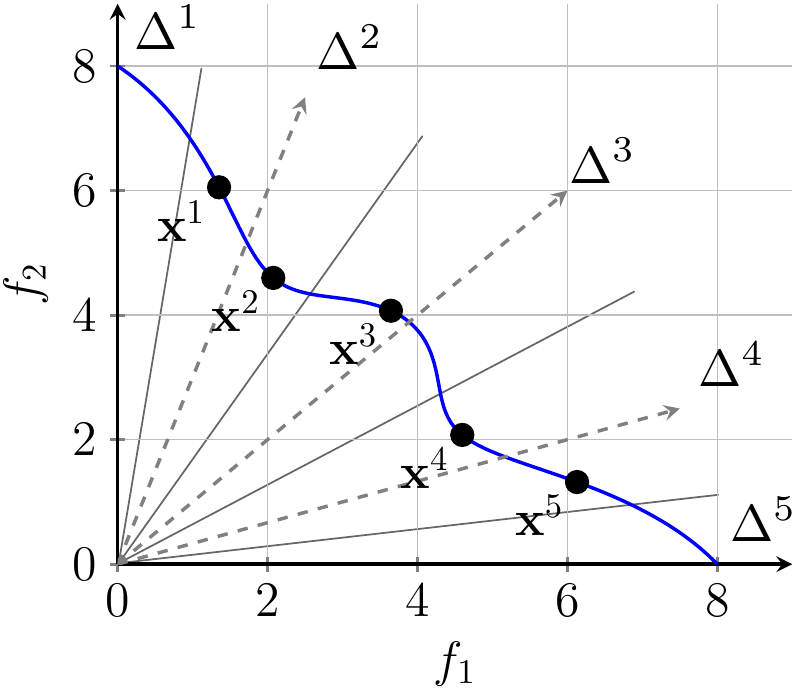}}
    \caption{Illustrative examples of some important concepts.}
    \label{fig:subregion}
\end{figure}

\noindent As for the example shown in~\pref{fig:subregion}(b), $\Psi^2=\{\mathbf{x}^1,\mathbf{x}^3\}$. Based on the definition of neighbourhood of a solution in~\pref{def:neighbor_sol}, we have the following corollaries and theorem to support the identification of local knee point(s).
\begin{corollary}
    Given a set of trade-off solution set $\mathrm{S}$, $\mathbf{x}^{\ast}\in\mathrm{S}$ is a local knee point if and only if there does not exist another solution $\mathbf{x}^{\prime}\in\Psi^{\ast}$ that $\mathbf{x}^{\prime}\preceq_k\mathbf{x}^{\ast}$.
    \label{corollary:local_knee}
\end{corollary}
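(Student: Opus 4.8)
The plan is to mirror the contradiction argument of \pref{theorem:global_knee}, but with every knee-dominance comparison confined to the neighbourhood $\Psi^{\ast}$ of $\mathbf{x}^{\ast}$ constructed in \pref{def:neighbor_sol}. The only genuinely non-mechanical ingredient is fixing what a \emph{local knee point} should mean here: I would read it as ``$\mathbf{x}^{\ast}$ has the best trade-off utility among all solutions sharing its knee region'', namely among $\{\mathbf{x}^{\ast}\}\cup\Psi^{\ast}$, because the subregion decomposition $\Upsilon$ together with the subregion neighbourhoods $\Omega^j$ is precisely the device that carves $\mathrm{S}$ into such regions. With this reading the corollary becomes the localisation of \pref{theorem:global_knee}, and Properties~1--3 (irreflexivity, asymmetry, transitivity) guarantee that $\preceq_k$ restricted to the finite set $\{\mathbf{x}^{\ast}\}\cup\Psi^{\ast}$ is a strict partial order, so that ``being the best trade-off there'' and ``not being knee-dominated by anybody in $\Psi^{\ast}$'' coincide.

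For the ``only if'' direction I would argue by contradiction. Suppose $\mathbf{x}^{\ast}$ is a local knee point yet some $\mathbf{x}'\in\Psi^{\ast}$ satisfies $\mathbf{x}'\preceq_k\mathbf{x}^{\ast}$, i.e.\ $\mathbb{U}(\mathbf{x}',\mathbf{x}^{\ast})<0$ by \pref{def:knee_dominance}. As shown in the proofs of Properties~2 and~3, $\mathbb{U}$ reduces to the plain sum of normalised objective differences, so $\mathbb{U}(\mathbf{x}^{\ast},\mathbf{x}')=-\mathbb{U}(\mathbf{x}',\mathbf{x}^{\ast})>0$, whence $\mathbf{x}^{\ast}\npreceq_k\mathbf{x}'$ and there is no tie. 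Then, exactly as in the preference discussion of \pref{sec:TOU}, $\mathbf{x}'$ is strictly more attractive to the DM than $\mathbf{x}^{\ast}$ within the same knee region, contradicting the assumption that $\mathbf{x}^{\ast}$ is the locally best trade-off; if desired, transitivity together with the finiteness of $\Psi^{\ast}$ lets me replace $\mathbf{x}'$ by the solution of $\Psi^{\ast}$ with the best trade-off utility to obtain an even cleaner witness.

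For the ``if'' direction I would take the contrapositive, which is immediate: if no $\mathbf{x}'\in\Psi^{\ast}$ has $\mathbf{x}'\preceq_k\mathbf{x}^{\ast}$, then $\mathbb{U}(\mathbf{x}',\mathbf{x}^{\ast})\geq 0$ for every $\mathbf{x}'\in\Psi^{\ast}$, equivalently $\mathbb{U}(\mathbf{x}^{\ast},\mathbf{x}')\leq 0$, so $\mathbf{x}^{\ast}$ is never knee-dominated by a neighbour and hence is the best trade-off in $\{\mathbf{x}^{\ast}\}\cup\Psi^{\ast}$, i.e.\ a local knee point under the reading fixed above. The hard part will not be any calculation --- the arithmetic is the one-line cancellation already carried out for Properties~2 and~3 --- but making this reading of ``local knee point'' explicit, so that the statement is a genuine theorem rather than a definition in disguise; I would justify it by appealing to the PF-curvature picture of \pref{fig:subregion}(a), where the spurious candidate $\mathbf{x}^3$ is excluded precisely because it falls outside the neighbourhood of $\mathbf{x}^2$, which is exactly what \pref{def:neighbor_sol} is engineered to enforce. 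A secondary, minor point to verify is that $\Psi^{\ast}$ is always well defined: \pref{eq:association} assigns $\mathbf{x}^{\ast}$ to a unique subregion whose neighbourhood is non-empty by \pref{def:neighour_subregion}, and should $\Psi^{\ast}$ contain no other solution, $\mathbf{x}^{\ast}$ is vacuously a local knee point, consistent with the statement.
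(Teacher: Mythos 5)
Your proposal matches the paper's own argument: the paper disposes of this corollary in one sentence, stating that it follows from the proof of \pref{theorem:global_knee} by restricting the knee-dominance comparison to $\Psi^{\ast}$, which is precisely the contradiction argument you carry out. Your version is in fact more careful than the paper's --- you make explicit the (otherwise undefined) meaning of ``local knee point'', the sign-flip identity $\mathbb{U}(\mathbf{x}^1,\mathbf{x}^2)=-\mathbb{U}(\mathbf{x}^2,\mathbf{x}^1)$, and the empty-neighbourhood edge case --- but the underlying approach is the same.
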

The proof of this corollary can be easily derived from that of~\pref{theorem:global_knee} by restricting the comparison within $\Psi^{\ast}$.
\begin{corollary}
    If there is only one global knee point, it is the local knee point identified by \pref{corollary:local_knee}.
\end{corollary}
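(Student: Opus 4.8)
The plan is to obtain this corollary directly from \pref{theorem:global_knee} and \pref{corollary:local_knee} by a set-inclusion argument, with no new computation needed. The single structural fact I would lean on is that the solution-neighbourhood $\Psi^{\ast}$ is a subset of $\mathrm{S}$: by \pref{def:neighbor_sol} (together with \pref{def:neighour_subregion} and the association rule~\pref{eq:association}), $\Psi^{\ast}$ contains only members of $\mathrm{S}$ that are associated with subregions lying in the neighbourhood of the subregion containing $\mathbf{x}^{\ast}$, with $\mathbf{x}^{\ast}$ itself excluded; hence $\Psi^{\ast}\subseteq\mathrm{S}\setminus\{\mathbf{x}^{\ast}\}$.

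Given this, I would argue as follows. Let $\mathbf{x}^{\ast}$ be the unique global knee point of $\mathrm{S}$. By \pref{theorem:global_knee}, there is no $\mathbf{x}^{\prime}\in\mathrm{S}$ with $\mathbf{x}^{\prime}\preceq_k\mathbf{x}^{\ast}$. Since $\Psi^{\ast}\subseteq\mathrm{S}$, it follows immediately that there is no $\mathbf{x}^{\prime}\in\Psi^{\ast}$ with $\mathbf{x}^{\prime}\preceq_k\mathbf{x}^{\ast}$ either. Applying the ``if'' direction of \pref{corollary:local_knee} to $\mathbf{x}^{\ast}$ then yields that $\mathbf{x}^{\ast}$ is a local knee point, which is exactly the assertion.

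The only places that call for a sentence of care --- and these constitute the (mild) obstacle here, since the core implication is otherwise immediate --- are the boundary cases of the reduction. First, I would note that $\mathbf{x}^{\ast}\in\mathrm{S}$ is always associated with a well-defined subregion under~\pref{eq:association}, so $\Psi^{\ast}$ is itself well-defined. Second, if $\Psi^{\ast}=\emptyset$, the non-existence of a knee-dominator inside $\Psi^{\ast}$ holds vacuously and \pref{corollary:local_knee} still applies. Finally, it is worth emphasising what is \emph{not} being claimed: \pref{corollary:local_knee} may identify more than one local knee point, and the present corollary only asserts that the global knee point is necessarily among those so identified --- it does not assert that the local knee point is unique --- so no converse direction needs to be established.
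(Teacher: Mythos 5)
Your proposal is correct and is essentially the paper's own argument in contrapositive form: the paper proves the same statement by contradiction (assuming a knee-dominator exists in $\Psi^{\ast}$ and noting this contradicts global-knee-ness via \pref{theorem:global_knee}), which rests on exactly the set-inclusion $\Psi^{\ast}\subseteq\mathrm{S}$ that you make explicit. Your added remarks on well-definedness of $\Psi^{\ast}$ and the vacuous case $\Psi^{\ast}=\emptyset$ are harmless refinements of the same route, not a different one.
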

\begin{proof}
    This corollary can be proved by contradiction. Let us assume that $\mathbf{x}^{\ast}$ is the global knee point but is not the local knee point identified by \pref{corollary:local_knee}. According to~\pref{def:neighbor_sol}, we can build a neighbourhood $\Psi^{\ast}$ of $\mathbf{x}^{\ast}$. Since $\mathbf{x}^{\ast}$ is not the local knee point, there should exist another solution $\mathbf{x}^{\prime}\in\Psi^{\ast}$ such that $\mathbf{x}^{\prime}\preceq_k\mathbf{x}^{\ast}$. Obviously, this contradicts the assertion that $\mathbf{x}^{\ast}$ is the global knee point.
\end{proof}

\begin{definition}\label{def:accumulative}
    If there is a set of local knee points $\mathrm{K}=\{\mathbf{x}^1_k,\cdots,\mathbf{x}^{|K|}_k\}$, the accumulative trade-off utility of a local knee point $\mathbf{x}^i_k$, $i\in\{1,\cdots,|K|\}$, is defined as:
    \begin{equation}
        \mathbb{K}(\mathbf{x}^i_k)=\sum_{j=1,j\neq i}^{|K|}\mathbb{U}(\mathbf{x}^i_k,\mathbf{x}^j_k).
    \end{equation}
\end{definition}

\begin{lemma}\label{lemma:accumulative}
    Given a pair of local knee points $\mathbf{x}^i_k$ and $\mathbf{x}^j_k$ from a set of local knee points $\mathrm{K}=\{\mathbf{x}^1_k,\cdots,\mathbf{x}^{|K|}_k\}$, if $\mathbf{x}^i_k\preceq_k\mathbf{x}^j_k$, then we have $\mathbb{K}(\mathbf{x}^i_k)<\mathbb{K}(\mathbf{x}^j_k)$.
\end{lemma}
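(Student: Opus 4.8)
The plan is to exploit the observation already made in the proof of Property~3: because $[x]^{+}+[x]^{-}=x$ for every real $x$, the trade-off utility collapses to a plain signed sum,
\[
\mathbb{U}(\mathbf{x}^1,\mathbf{x}^2)=\sum_{i=1}^m\frac{f_i(\mathbf{x}^1)-f_i(\mathbf{x}^2)}{z^{\max}_i-z^{\min}_i}.
\]
From this representation two elementary facts are immediate: antisymmetry, $\mathbb{U}(\mathbf{x}^1,\mathbf{x}^2)=-\mathbb{U}(\mathbf{x}^2,\mathbf{x}^1)$ (already used in Property~2), and additivity along a chain, $\mathbb{U}(\mathbf{x}^1,\mathbf{x}^3)=\mathbb{U}(\mathbf{x}^1,\mathbf{x}^2)+\mathbb{U}(\mathbf{x}^2,\mathbf{x}^3)$. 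These two identities are all the machinery the proof needs.

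Next I would compute the difference $\mathbb{K}(\mathbf{x}^i_k)-\mathbb{K}(\mathbf{x}^j_k)$ directly from~\pref{def:accumulative}, splitting each of the sums $\sum_{l\neq i}\mathbb{U}(\mathbf{x}^i_k,\mathbf{x}^l_k)$ and $\sum_{l\neq j}\mathbb{U}(\mathbf{x}^j_k,\mathbf{x}^l_k)$ according to whether the running index $l$ lies in $\{i,j\}$ or not. For every $l\notin\{i,j\}$, additivity gives $\mathbb{U}(\mathbf{x}^i_k,\mathbf{x}^l_k)-\mathbb{U}(\mathbf{x}^j_k,\mathbf{x}^l_k)=\mathbb{U}(\mathbf{x}^i_k,\mathbf{x}^j_k)$, so these $|K|-2$ indices contribute $(|K|-2)\,\mathbb{U}(\mathbf{x}^i_k,\mathbf{x}^j_k)$ in total. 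The two leftover terms are $\mathbb{U}(\mathbf{x}^i_k,\mathbf{x}^j_k)$ (the $l=j$ term of the first sum) and $-\mathbb{U}(\mathbf{x}^j_k,\mathbf{x}^i_k)=+\mathbb{U}(\mathbf{x}^i_k,\mathbf{x}^j_k)$ (minus the $l=i$ term of the second sum, by antisymmetry), contributing a further $2\,\mathbb{U}(\mathbf{x}^i_k,\mathbf{x}^j_k)$. Hence $\mathbb{K}(\mathbf{x}^i_k)-\mathbb{K}(\mathbf{x}^j_k)=|K|\cdot\mathbb{U}(\mathbf{x}^i_k,\mathbf{x}^j_k)$.

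Finally, the hypothesis $\mathbf{x}^i_k\preceq_k\mathbf{x}^j_k$ means, by the definition of knee-dominance, that $\mathbb{U}(\mathbf{x}^i_k,\mathbf{x}^j_k)<0$; since $|K|\ge 2$ whenever the statement is non-vacuous, this yields $\mathbb{K}(\mathbf{x}^i_k)-\mathbb{K}(\mathbf{x}^j_k)<0$, i.e.\ $\mathbb{K}(\mathbf{x}^i_k)<\mathbb{K}(\mathbf{x}^j_k)$, as claimed. I expect the only point requiring care to be the bookkeeping in the middle step — correctly tracking the two leftover terms and the sign flip from antisymmetry — but there is no analytic obstacle whatsoever: the whole argument rests on the linearity of $\mathbb{U}$ in each of its arguments.
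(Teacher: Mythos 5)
Your proposal is correct and follows essentially the same route as the paper: both collapse $\mathbb{U}$ to the signed sum $\sum_{i=1}^m\frac{f_i(\mathbf{x}^1)-f_i(\mathbf{x}^2)}{z^{\max}_i-z^{\min}_i}$, derive the identity $\mathbb{K}(\mathbf{x}^i_k)-\mathbb{K}(\mathbf{x}^j_k)=|K|\,\mathbb{U}(\mathbf{x}^i_k,\mathbf{x}^j_k)$, and conclude from $\mathbb{U}(\mathbf{x}^i_k,\mathbf{x}^j_k)<0$. The only difference is cosmetic: you carry out the bookkeeping for general indices $i,j$ via antisymmetry and additivity, whereas the paper computes the case of $\mathbf{x}^1_k$ and $\mathbf{x}^2_k$ explicitly and then asserts the generalisation.
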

\begin{proof}
    To facilitate the notation, let us first prove this lemma for $\mathbf{x}^1_k$ and $\mathbf{x}^2_k$. According to \pref{def:accumulative}, we have:
    \begin{align*}
        \mathbb{K}(\mathbf{x}^1_k) &= \sum_{i=2}^{|K|}\mathbb{U}(\mathbf{x}^1_k,\mathbf{x}^i_k)
                                   = \sum_{i=2}^{|K|}\bigg\{\sum_{j=1}^m[\frac{f_j(\mathbf{x}^1_k)-f_j(\mathbf{x}^i_k)}{z^{\max}_j-z_j^{\min}}]\bigg\}\\
                                   &= \sum_{j=1}^m[\frac{f_j(\mathbf{x}^1_k)-f_j(\mathbf{x}^2_k)}{z^{\max}_j-z_j^{\min}}]+\cdots+\sum_{j=1}^m[\frac{f_j(\mathbf{x}^1_k)-f_j(\mathbf{x}^{|K|}_k)}{z^{\max}_j-z_j^{\min}}],
    \end{align*}
\begin{align*}
        \mathbb{K}(\mathbf{x}^2_k) &= \sum_{i=1}^{|K|}\mathbb{U}(\mathbf{x}^2_k,\mathbf{x}^i_k)
                                   = \sum_{i=1}^{|K|}\bigg\{\sum_{j=1}^m[\frac{f_j(\mathbf{x}^2_k)-f_j(\mathbf{x}^i_k)}{z^{\max}_j-z_j^{\min}}]\bigg\}\\
                                   &= \sum_{j=1}^m[\frac{f_j(\mathbf{x}^2_k)-f_j(\mathbf{x}^1_k)}{z^{\max}_j-z_j^{\min}}]+\cdots+\sum_{j=1}^m[\frac{f_j(\mathbf{x}^2_k)-f_j(\mathbf{x}^{|K|}_k)}{z^{\max}_j-z_j^{\min}}].
    \end{align*}
    Then we can derive
\begin{align*}
    \mathbb{K}(\mathbf{x}^1_k)-\mathbb{K}(\mathbf{x}^2_k)&=|K|\sum_{j=1}^m\frac{f_j(\mathbf{x}^1_k)}{z^{\max}_j-z_j^{\min}}-|K|\sum_{j=1}^m\frac{f_j(\mathbf{x}^2_k)}{z^{\max}_j-z_j^{\min}}\\
                                   &= |K|\bigg\{\sum_{j=1}^m[\frac{f_j(\mathbf{x}^1_k)-f_j(\mathbf{x}^2_k)}{z^{\max}_j-z_j^{\min}}]\bigg\}.
\end{align*}
Since $\mathbf{x}^1_k\preceq_k\mathbf{x}^2_k$, we have $\mathbb{U}(\mathbf{x}^1_k,\mathbf{x}^2_k)=\sum_{i=1}^m[f_i(\mathbf{x}^1_k)-f_i(\mathbf{x}^2_k)]<0$. Accordingly, $\mathbb{K}(\mathbf{x}^1_k)-\mathbb{K}(\mathbf{x}^2_k)<0$. It is not difficult to envisage that this conclusion can be generalised to any pair of local knee points.
\end{proof}

\begin{theorem}\label{theorem:best_knee}
    Based on~\pref{lemma:accumulative}, given a set of local knee points $\mathrm{K}=\{\mathbf{x}^1_k,\cdots,\mathbf{x}^{|K|}_k\}$, the best one is $\argmin_{\mathbf{x}^i_k\in\mathrm{K}}(\mathbb{K}(\mathbf{x}^i_k))$.
\end{theorem}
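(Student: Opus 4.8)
The statement is an essentially immediate consequence of \pref{lemma:accumulative}, so the plan is to extract the one identity that already appears inside its proof and then argue in both directions. Since $[x]^++[x]^-=x$ for every real $x$, the trade-off utility collapses to $\mathbb{U}(\mathbf{x}^1,\mathbf{x}^2)=\sum_{i=1}^m\frac{f_i(\mathbf{x}^1)-f_i(\mathbf{x}^2)}{z^{\max}_i-z^{\min}_i}=h(\mathbf{x}^1)-h(\mathbf{x}^2)$, where $h(\mathbf{x}):=\sum_{i=1}^m f_i(\mathbf{x})/(z^{\max}_i-z^{\min}_i)$; substituting this into \pref{def:accumulative} makes the term $\sum_\ell h(\mathbf{x}^\ell_k)$ common to all $\mathbb{K}(\mathbf{x}^i_k)$ and yields
\[
    \mathbb{K}(\mathbf{x}^i_k)-\mathbb{K}(\mathbf{x}^j_k)=|K|\cdot\mathbb{U}(\mathbf{x}^i_k,\mathbf{x}^j_k)
    \qquad\text{for all }\mathbf{x}^i_k,\mathbf{x}^j_k\in\mathrm{K}.
\]
First I would record this identity (quoting \pref{lemma:accumulative} rather than re-deriving it) and observe that it shows $\preceq_k$ restricted to $\mathrm{K}$ is driven by the single scalar $h$, hence is a total preorder on the finite set $\mathrm{K}$ with $\cong_k$ being exactly the tie $h(\mathbf{x}^i_k)=h(\mathbf{x}^j_k)$; in particular a $\preceq_k$-minimal (``best'') element exists, and no cycles can occur.

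Next I would fix the meaning of ``the best one'': recalling that $\mathbf{x}^1\preceq_k\mathbf{x}^2$ means the DM prefers $\mathbf{x}^1$ to $\mathbf{x}^2$, a local knee point is a best one iff no other local knee point is preferred to it, i.e.\ iff no $\mathbf{x}^j_k\in\mathrm{K}$ satisfies $\mathbf{x}^j_k\preceq_k\mathbf{x}^i_k$. The two inclusions are then one line each. If $\mathbf{x}^\ast_k\in\argmin_{\mathbf{x}^i_k\in\mathrm{K}}\mathbb{K}(\mathbf{x}^i_k)$, then $\mathbb{K}(\mathbf{x}^\ast_k)\le\mathbb{K}(\mathbf{x}^j_k)$ for every $j$, so the identity gives $\mathbb{U}(\mathbf{x}^\ast_k,\mathbf{x}^j_k)\le 0$; by the antisymmetry $\mathbb{U}(\mathbf{x}^j_k,\mathbf{x}^\ast_k)=-\mathbb{U}(\mathbf{x}^\ast_k,\mathbf{x}^j_k)\ge 0$ noted in \textbf{Property 2}, hence $\mathbf{x}^j_k\npreceq_k\mathbf{x}^\ast_k$, i.e.\ $\mathbf{x}^\ast_k$ is a best local knee point. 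Conversely, if $\mathbf{x}'_k$ is a best local knee point then $\mathbf{x}^j_k\npreceq_k\mathbf{x}'_k$, so $\mathbb{U}(\mathbf{x}^j_k,\mathbf{x}'_k)\ge 0$, equivalently $\mathbb{U}(\mathbf{x}'_k,\mathbf{x}^j_k)\le 0$, and the identity gives $\mathbb{K}(\mathbf{x}'_k)\le\mathbb{K}(\mathbf{x}^j_k)$ for all $j$, i.e.\ $\mathbf{x}'_k\in\argmin_{\mathbf{x}^i_k\in\mathrm{K}}\mathbb{K}(\mathbf{x}^i_k)$. Thus the set of best local knee points coincides with the set of minimizers of $\mathbb{K}$ over $\mathrm{K}$, which is exactly the claim.

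I expect the only real friction to be the informal phrase ``the best one'' rather than any genuine mathematical difficulty. Because $\preceq_k$ on $\mathrm{K}$ is a total preorder and not necessarily a strict total order, two distinct local knee points with the same value of $h$ are merely $\cong_k$, so there can be several co-minimizers of $\mathbb{K}$; I would either point out that such coincidences are genuine indifferences for the DM (so the theorem reads ``every minimizer of $\mathbb{K}$ is a best one'') or note that a fixed tie-break makes the minimizer unique. I would also dispatch the degenerate cases in a sentence --- $|K|=1$ makes the unique local knee point vacuously best, and the normalization tacitly assumes $z^{\max}_i>z^{\min}_i$, as everywhere in \pref{sec:proposal} --- and otherwise keep the argument as short as the computation above.
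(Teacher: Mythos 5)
Your proof is correct and follows essentially the route the paper intends: the paper states this theorem with no separate proof, relying entirely on \pref{lemma:accumulative}, and your key identity $\mathbb{K}(\mathbf{x}^i_k)-\mathbb{K}(\mathbf{x}^j_k)=|K|\,\mathbb{U}(\mathbf{x}^i_k,\mathbf{x}^j_k)$ is exactly what that lemma's proof establishes. Your write-up is in fact more complete than the paper's, since you prove both directions (minimiser of $\mathbb{K}$ iff not knee-dominated within $\mathrm{K}$) and explicitly handle ties via the total preorder induced by $h$, whereas the paper records only the forward implication in the lemma and leaves the rest implicit.
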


\begin{remark}
    \pref{theorem:best_knee} is essentially equivalent to \pref{theorem:global_knee}. Besides, it also provides the foundation to prioritise the importance of different local knee points. Specifically, local knee points can be sorted according to their accumulative trade-off utility. In particular, they are incomparable if and only if they have the same accumulative trade-off utility.
\end{remark}


\begin{algorithm}[t]
    \caption{$\mathtt{KPITU}(\mathrm{S})$: main procedure of keen point identification based on trade-off utility}
    \label{alg:KPITU}
    \KwIn{Trade-off solution set $\mathrm{S}=\{\mathbf{x}^i\}_{i=1}^N$}
    \KwOut{Local knee point set $\mathrm{K}$}

    $\mathrm{K}\leftarrow\emptyset$;\\
    Generate a set of weight vectors $\mathrm{W}\leftarrow\{\mathbf{w}^i\}_{i=1}^{\hat{N}}$;\\
    $\Psi\leftarrow\mathtt{NeighbourSearch}(\mathrm{S},\mathrm{W})$;\\
    \For{$i\leftarrow 1$ \KwTo $N$}{
        $flag\leftarrow 1$;\\
        \ForEach{$\mathbf{x}\in\Psi^i$}{
            \If{$\mathbb{U}(\mathbf{x}^i,\mathbf{x})>1$}{
                $flag\leftarrow 0$; \textbf{break};
            } 
        }
        \If{$flag==1$}{
            $\mathrm{K}\leftarrow\mathrm{K}\bigcup\{\mathbf{x}^i\}$;
        }
    }
    $\mathrm{K}\leftarrow\mathtt{Sort}(\mathrm{K})$;\\
    \Return $\mathrm{K}$
\end{algorithm}

\subsection{Algorithmic Implementation of KPITU}
\label{sec:algorithm}

Based on the foundation discussed in~\pref{sec:TOU}, this section provides the algorithmic implementation of our proposed KPITU step by step. The pseudo code of the main procedure of KPITU is given in~\pref{alg:KPITU}. Some important parts are discussed further in the following paragraphs.

\begin{itemize}
    \item The input of KPITU is a set of trade-off solutions $\mathrm{S}$ whilst its output is a set of knee points $\mathrm{K}$ sorted based on their trade-off utility.

    \item As discussed in~\pref{sec:TOU}, the KPI process is restricted to the neighbourhood of each solution in $\mathrm{S}$ in order to find as many knee points as possible. \pref{alg:neighbor} gives the pseudo code of the process for identifying the neighbourhood of each solution in $\mathrm{S}$. In particular, lines 2 to 8 are the algorithmic implementation of \pref{def:neighour_subregion} for identifying the neighbourhood of each subregion; lines 9 to 16 implement the solution association given in~\pref{eq:association} and lines 17 to 21 provide the algorithmic implementation of \pref{def:neighbor_sol} for identifying the neighbourhood of each solution in $\mathrm{S}$.
    \item Lines 4 to 10 of~\pref{alg:KPITU} are the algorithmic implementation of~\pref{corollary:local_knee}.
    \item If our KPITU identifies more than one local knee point, it is arguable that these knee points in $\mathrm{K}$ are of equal importance given the difference of their trade-off utility. Based on~\pref{theorem:best_knee}, \pref{alg:sort} gives the procedure that sorts the local knee points in $\mathrm{K}$ according to their accumulative trade-off utility.

    \item In KPITU, there are three major parts. The first one is the process to identify the neighbourhood of each solution in $\mathrm{S}$ given in~\pref{alg:neighbor}. Specifically, lines 2 to 8 of~\pref{alg:neighbor} cost at most $\hat{N}^2$ comparisons; lines 9 to 14 cost at most $N\times\hat{N}$ comparisons; and lines 15 to 19 cost at most $N\times\hat{N}\times|\Delta|$ comparisons. Since $\hat{N}\leq N$, the time complexity of $\mathtt{NeighbourSearch}(\mathrm{S},\mathrm{W})$ is $\mathcal{O}(N^2)$. As for the main iteration of $\mathtt{KPITU(S)}$ shown in~\pref{alg:KPITU}, lines 4 to 10 cost $N\times|\Psi^i|$ comparisons. As for the procedure $\mathtt{Sort}(\mathrm{K})$, the calculation of accumulative trade-off utility requires $\mathcal{O}(|\mathrm{K}|^2)$ computations whilst the worst case complexity of sorting $\mathrm{K}$ is $\mathcal{O}(|\mathrm{K}|\log|\mathrm{K}|)$. In summary, the time complexity of KPITU is $\mathcal{O}(N^2)$.
\end{itemize}

\begin{algorithm}[t]
    \caption{$\mathtt{NeighbourSearch}(\mathrm{S},\mathrm{W})$: the procedure used to identify the neighbourhood of each solution in $\mathrm{S}$}
    \label{alg:neighbor}
    \KwIn{Trade-off solution set $\mathrm{S}$, weight vector set $\mathrm{W}$}
    \KwOut{Neighborhood set $\Psi$}
    $\Psi\leftarrow\emptyset$;\\
    \For{$i\leftarrow 1$ \KwTo $\hat{N}$}{
        $\Omega^i\leftarrow\emptyset$; $d_{\min}\leftarrow\infty$;\\
        \For{$j\leftarrow 1$ \KwTo $\hat{N}$ $\&\&$ $j\neq i$}{
            \uIf{$\langle\mathbf{w}^i,\mathbf{w}^j\rangle<d_{\min}$}{
                $d_{\min}\leftarrow\langle\mathbf{w}^i,\mathbf{w}^j\rangle$; $\Omega^i\leftarrow\emptyset$; $\Omega^i\leftarrow\{j\}$;
            }\ElseIf{$\langle\mathbf{w}^i,\mathbf{w}^j\rangle==d_{\min}$}{
                $\Omega^i\leftarrow\Omega^i\bigcup\{j\}$;
            }
        }
    }
    \For{$i\leftarrow 1$ \KwTo $\hat{N}$}{
    	$\Delta^i\leftarrow\emptyset$;
    }
    \For{$i\leftarrow 1$ \KwTo $N$}{
        $d_{\min}\leftarrow\infty$;\\
        \For{$j\leftarrow 1$ \KwTo $\hat{N}$}{
            \If{$\langle\overline{\mathbf{F}}(\mathbf{x}^i),\mathbf{w}^j\rangle<d_{\min}$}{
                $d_{\min}\leftarrow\langle\overline{\mathbf{F}}(\mathbf{x}^i),\mathbf{w}^j\rangle$; $index\leftarrow j$;
            }
        }
        $\Delta^{index}\leftarrow\Delta^{index}\bigcup\{\mathbf{x}^i\}$;
    }
    \For{$i\leftarrow 1$ \KwTo $N$}{
        \For{$j\leftarrow 1$ \KwTo $\hat{N}$}{
            \If{$\mathbf{x}^i\in\Delta^j$}{
                \ForEach{$\Delta\in\Omega^j$}{
                    $\Psi^i\leftarrow\{\mathbf{x}|\mathbf{x}\in\Delta\ \text{where } \mathbf{x}\neq\mathbf{x}^i\}$;
                }
            }
        }
    }
    \Return $\Psi\leftarrow\{\Psi^1,\cdots,\Psi^N\}$
\end{algorithm}

\begin{algorithm}[t]
    \caption{$\mathtt{Sort}(\mathrm{K})$: the procedure used to sort the knee points in $\mathrm{K}$ according to their accumulative trade-off utility}
    \label{alg:sort}
    \KwIn{Knee point set $\mathrm{K}$}
    \KwOut{Sorted knee point set $\mathrm{K}$}
    \For{$i\leftarrow 1$ \KwTo $|\mathrm{K}|$}{
        $\mathbb{K}(\mathbf{x}^i)\leftarrow 0$;\\
        \ForEach{$\mathbf{x}\in\Psi^i$ $\&\&$ $\mathbf{x}\neq\mathbf{x}^i$}{
            $\mathbb{K}(\mathbf{x}^i)\leftarrow\mathbb{K}(\mathbf{x}^i)+\mathbb{U}(\mathbf{x}^i,\mathbf{x})$;
        }
    }
    Sort $\mathrm{K}$ in descending order of $\mathbb{K}(\mathbf{x})$ where $\mathbf{x}\in\mathrm{K}$;\\
    \Return Sorted $\mathrm{K}$
\end{algorithm}

\subsection{Parallel Implementation of KPITU}
\label{sec:GPU}

According to the time complexity analysis discussed in~\pref{sec:algorithm}, we find that the most time consuming parts of KPITU are $\mathtt{NeighbourSearch}(\mathrm{S},\mathrm{W})$ and the implementation of~\pref{corollary:local_knee} (lines 4 to 10), the complexity of which are quadratic of $N$.

In principle, both the search of neighbourhood for each candidate solution (i.e., $\mathtt{NeighbourSearch}(\mathrm{S},\mathrm{W})$) and the identification of knee point among each neighbourhood (i.e., lines 4 to 10) are implemented in a sequential manner. They require comparisons of either distances or trade-off utility of each solution with others in $\mathrm{S}$. However, it is not difficult to see that such comparisons are independent from different solutions of $\mathrm{S}$. Thus, it is a natural idea to parallelise these two processes across different solutions of $\mathrm{S}$. Accordingly, this reduces the time complexity of KPITU to be linear, i.e., $\mathcal{O}(N)$. Bearing this consideration in mind, we implement KPITU under a GPU environment (NVIDIA GeForce RTX 2080Ti, 11GB GDDR6) where each computing unit carries out each for-loop of~\pref{alg:KPITU}.


\section{Empirical Studies}
\label{sec:experiments}

To validate the effectiveness of our proposed KPITU, we test and compare its performance on benchmark problems with two to ten objectives against five state-of-the-art KPI methods. Before we present the empirical results, we delineate the experimental setup including the peer algorithms, the benchmark problems and the performance indicator.

\subsection{Experimental Setup}
\label{sec:setup}

\subsubsection{Peer Algorithms}
\label{sec:algorithms}

According to the categorisation in~\pref{sec:literature}, we choose five state-of-the-art KPI methods, including cone-domination (CD)~\cite{Ramirez-Atencia17}, EMU$^r$~\cite{BhattacharjeeSR17}, reflex angle (RA)~\cite{BrankeDDO04}, CHIM~\cite{Das99} and MMD~\cite{ChiuYJ16}, from each of those four classes to compare with our proposed KPITU. Note that the dominance angle $\varphi$ of CD is set to 135 and the number of weight vectors used in EMU$^r$ is set to be $1/6$ of the cardinality of the underlying solution set.

\subsubsection{Benchmark Test Problems}
\label{sec:problems}

All benchmark test problems proposed in this literature for testing KPI, including DO2DK, DEB2DK and DEB3DK~\cite{BrankeDDO04}, CKP~\cite{YuJO18} and PMOP~\cite{YuJO19a}, are used in our experiments. DO2DK, DEB2DK, and CKP only have two objectives; DEB3DK is a three-objective benchmark test problem whilst PMOP (i.e., PMOP1 to PMOP14) is scalable to higher dimensional cases. In particular, the number of objectives is set as $m\in\{2,3,5,8,10\}$ in our experiments. Moreover, DO2DK, DEB2DK and DEB3DK have a convex PF shape; CKP is designed to have a concave PF whilst the shape of PMOP is tuneable to be linear, concave and convex. Furthermore, the number of knee points is controllable in our experiments where we consider the scenarios of having only one global knee point and with $2^{m-1}$ local knee points as suggested in~\cite{YuJO19a}. More detailed information of these benchmark test problems can be found from their corresponding references (i.e., \cite{BrankeDDO04,YuJO18} and \cite{YuJO19a}). Since all benchmark test problems have analytical forms, we sample a set of representative points from the PF of the corresponding problem formulation to setup the baseline for KPI, denoted as $\mathrm{P}$. In particular, $|\mathrm{P}|$ is set to 200,676,$4,096$,$5,000$,$8,000$ and $10,000$ for different settings of $m$\footnote{The data sets along with the source codes are downloable from our project page \url{https://github.com/COLA-Laboratory/kpi}.}.

\subsubsection{Performance Indicator}
\label{sec:indicator}

In addition to a visual comparison of different KPI methods, which is widely used in the KPI literature, we use the following performance indicator $\mathbb{I}(\mathrm{S})$ to have a quantitative evaluation:
\begin{equation}
    \mathbb{I}(\mathrm{S})=\frac{1}{|\mathrm{S}|}\sum_{i=1}^{|\mathrm{S}|}dist(\mathbf{x}^i,\mathrm{S}^{\ast}),
\end{equation}
where $\mathrm{S}$ is the set of knee points identified by a KPI method, $\mathrm{S}^{\ast}$ is the set of true knee points of the underlying benchmark test problem and $dist(\mathbf{x}^i,\mathrm{S}^{\ast})$ is the Euclidean distance between $\mathbf{x}^i\in\mathrm{S}$ and its nearest neighbour in $\mathrm{S}^{\ast}$. Essentially, $\mathbb{I}(\mathrm{S})$ is similar to the IGD metric~\cite{BosmanT03} widely used in the EMO literature~\cite{LiZZL09,LiZLZL09,LiKWCR12,LiKCLZS12,CaoKWL12,LiKWTM13,LiK14,CaoKWL14,WuKZLWL15,LiKZD15,LiKD15,LiDZ15,LiODY16,LiDZZ17,WuKJLZ17,WuLKZZ17,LiWKC13,ChenLY18,KumarBCLB18,WuLKZ20,ChenLBY18,LiCFY19,WuLKZZ19,LiuLC19,LiXT19,ZouJYZZL19,BillingsleyLMMG19,GaoNL19}. In particular, $\mathbb{I}(S)=0$ if and only if the corresponding KPI method perfectly identifies all knee points of the underlying benchmark test problem. It is worth noting that each KPI method is only required to run once since all of them are deterministic method.

\subsection{Experimental Results}
\label{sec:results}

In this section, we will present and analyse the experimental results of our proposed KPITU against five state-of-the-art KPI methods. In particular, the discussion is separated into three parts according to the characteristics of knee point(s) of the underlying benchmark test problems.

\subsubsection{Results on Problems with Only One Global Knee Point}
\label{sec:global_knee_results}

\begin{table*}[htbp]
    \centering
    \caption{Comparison Results of KPITU with Other Five KPI Methods on Problems with Only One Global Knee Point}
    \resizebox{\textwidth}{!}{
        \begin{tabular}{c|c|cccccc||c|c|cccccc}
            \cline{2-8}\cline{10-16}
            & $m$     & \texttt{KPITU} & \texttt{CD}    & \texttt{EMU$^r$}  & \texttt{RA}    & \texttt{CHIM}  & \texttt{MMD}   &       & $m$     & \texttt{KPITU} & \texttt{CD}    & \texttt{EMU$^r$}  & \texttt{RA}    & \texttt{CHIM}  & \texttt{MMD} \\
            \hline
            \texttt{DO2DK} & 2     & \cellcolor[rgb]{ .749,  .749,  .749}\textbf{2.210E-2} & \cellcolor[rgb]{ .749,  .749,  .749}\textbf{2.210E-2} & 2.281E+0 & 1.948E-1 & \cellcolor[rgb]{ .749,  .749,  .749}\textbf{2.210E-2} & \cellcolor[rgb]{ .749,  .749,  .749}\textbf{2.210E-2} & \texttt{CKP}   & 2     & \cellcolor[rgb]{ .749,  .749,  .749}\textbf{0.000E+0} & 2.009E-1 & \cellcolor[rgb]{ .749,  .749,  .749}\textbf{0.000E+0} & 1.670E-1 & \cellcolor[rgb]{ .749,  .749,  .749}\textbf{0.000E+0} & \cellcolor[rgb]{ .749,  .749,  .749}\textbf{0.000E+0} \\
            \hline
            \hline
            \texttt{DEB2DK} & 2     & \cellcolor[rgb]{ .749,  .749,  .749}\textbf{0.000E+0} & 1.590E-1 & \cellcolor[rgb]{ .749,  .749,  .749}\textbf{0.000E+0} & \cellcolor[rgb]{ .749,  .749,  .749}\textbf{0.000E+0} & \cellcolor[rgb]{ .749,  .749,  .749}\textbf{0.000E+0} & \cellcolor[rgb]{ .749,  .749,  .749}\textbf{0.000E+0} & \texttt{DEB3DK} & 3     & \cellcolor[rgb]{ .749,  .749,  .749}\textbf{0.000E+0} & 1.453E-1 & \cellcolor[rgb]{ .749,  .749,  .749}\textbf{0.000E+0} & \diagbox{}{} & 1.299E-1 & 1.299E-1 \\
            \hline
            \hline
            \multirow{5}[1]{*}{\texttt{PMOP1}} & 2     & \cellcolor[rgb]{ .749,  .749,  .749}\textbf{0.000E+0} & 4.840E-2 & \cellcolor[rgb]{ .749,  .749,  .749}\textbf{0.000E+0} & \cellcolor[rgb]{ .749,  .749,  .749}\textbf{0.000E+0} & \cellcolor[rgb]{ .749,  .749,  .749}\textbf{0.000E+0} & \cellcolor[rgb]{ .749,  .749,  .749}\textbf{0.000E+0} & \multirow{5}[1]{*}{\texttt{PMOP7}} & 2     & \cellcolor[rgb]{ .749,  .749,  .749}\textbf{0.000E+0} & 2.320E+0 & 2.365E+0 & \cellcolor[rgb]{ .749,  .749,  .749}\textbf{0.000E+0} & \cellcolor[rgb]{ .749,  .749,  .749}\textbf{0.000E+0} & \cellcolor[rgb]{ .749,  .749,  .749}\textbf{0.000E+0} \\
            & 3     & \cellcolor[rgb]{ .749,  .749,  .749}\textbf{0.000E+0} & 7.380E-2 & 8.580E-2 & \diagbox{}{} & 4.460E-2 & 4.460E-2 &       & 3     & \cellcolor[rgb]{ .749,  .749,  .749}\textbf{1.059E-1} & 5.879E-1 & 5.476E-1 & \diagbox{}{} & 1.222E-1 & 1.222E-1 \\
            & 5     & \cellcolor[rgb]{ .749,  .749,  .749}\textbf{3.120E-2} & 2.967E-1 & \cellcolor[rgb]{ .749,  .749,  .749}\textbf{3.120E-2} & \diagbox{}{} & \cellcolor[rgb]{ .749,  .749,  .749}\textbf{3.120E-2} & \cellcolor[rgb]{ .749,  .749,  .749}\textbf{3.120E-2} &       & 5     & \cellcolor[rgb]{ .749,  .749,  .749}\textbf{6.504E-1} & 9.780E-1 & 8.582E-1 & \diagbox{}{} & \cellcolor[rgb]{ .749,  .749,  .749}\textbf{6.504E-1} & \cellcolor[rgb]{ .749,  .749,  .749}\textbf{6.504E-1} \\
            & 8     & \cellcolor[rgb]{ .749,  .749,  .749}\textbf{4.177E-1} & 4.366E+0 & 9.628E+0 & \diagbox{}{} & \cellcolor[rgb]{ .749,  .749,  .749}\textbf{4.177E-1} & \cellcolor[rgb]{ .749,  .749,  .749}\textbf{4.177E-1} &       & 8     & \cellcolor[rgb]{ .749,  .749,  .749}\textbf{1.925E-1} & 6.218E-1 & 5.095E-1 & \diagbox{}{} & \cellcolor[rgb]{ .749,  .749,  .749}\textbf{1.925E-1} & \cellcolor[rgb]{ .749,  .749,  .749}\textbf{1.925E-1} \\
            & 10    & \cellcolor[rgb]{ .749,  .749,  .749}\textbf{3.026E+0} & 6.471E+0 & 8.964E+0 & \diagbox{}{} & \cellcolor[rgb]{ .749,  .749,  .749}\textbf{3.026E+0} & \cellcolor[rgb]{ .749,  .749,  .749}\textbf{3.026E+0} &       & 10    & \cellcolor[rgb]{ .749,  .749,  .749}\textbf{2.361E-1} & 8.687E-1 & 9.214E-1 & \diagbox{}{} & \cellcolor[rgb]{ .749,  .749,  .749}\textbf{2.361E-1} & \cellcolor[rgb]{ .749,  .749,  .749}\textbf{2.361E-1} \\
            \hline\hline
            \multirow{5}[1]{*}{\texttt{PMOP2}} & 2     & 1.203E+0 & \cellcolor[rgb]{ .749,  .749,  .749}\textbf{1.660E-1} & 3.453E-1 & 4.092E-1 & 1.203E+0 & 1.203E+0 & \multirow{5}[1]{*}{\texttt{PMOP8}} & 2     & \cellcolor[rgb]{ .749,  .749,  .749}\textbf{0.000E+0} & 1.123E-1 & \cellcolor[rgb]{ .749,  .749,  .749}\textbf{0.000E+0} & 4.960E-1 & \cellcolor[rgb]{ .749,  .749,  .749}\textbf{0.000E+0} & \cellcolor[rgb]{ .749,  .749,  .749}\textbf{0.000E+0} \\
            & 3     & 8.481E-1 & \cellcolor[rgb]{ .749,  .749,  .749}\textbf{3.553E-1} & 9.514E-1 & \diagbox{}{} & 8.996E-1 & 8.996E-1 &       & 3     & \cellcolor[rgb]{ .749,  .749,  .749}\textbf{6.950E-2} & 9.530E-2 & 8.410E-2 & \diagbox{}{} & \cellcolor[rgb]{ .749,  .749,  .749}\textbf{6.950E-2} & \cellcolor[rgb]{ .749,  .749,  .749}\textbf{6.950E-2} \\
            & 5     & \cellcolor[rgb]{ .749,  .749,  .749}\textbf{4.719E-1} & 7.060E-1 & 7.821E-1 & \diagbox{}{} & 6.946E-1 & 6.946E-1 &       & 5     & \cellcolor[rgb]{ .749,  .749,  .749}\textbf{4.426E-1} & \cellcolor[rgb]{ .749,  .749,  .749}\textbf{4.426E-1} & 6.651E-1 & \diagbox{}{} & \cellcolor[rgb]{ .749,  .749,  .749}\textbf{4.426E-1} & \cellcolor[rgb]{ .749,  .749,  .749}\textbf{4.426E-1} \\
            & 8     & 5.992E-1 & \cellcolor[rgb]{ .749,  .749,  .749}\textbf{5.689E-1} & 7.679E-1 & \diagbox{}{} & 5.992E-1 & 5.992E-1 &       & 8     & 9.143E-1 & \cellcolor[rgb]{ .749,  .749,  .749}\textbf{6.319E-1} & 9.076E-1 & \diagbox{}{} & 9.143E-1 & 9.143E-1 \\
            & 10    & \cellcolor[rgb]{ .749,  .749,  .749}\textbf{5.772E-1} & 8.219E-1 & 7.682E-1 & \diagbox{}{} & \cellcolor[rgb]{ .749,  .749,  .749}\textbf{5.772E-1} & \cellcolor[rgb]{ .749,  .749,  .749}\textbf{5.772E-1} &       & 10    & \cellcolor[rgb]{ .749,  .749,  .749}\textbf{2.383E+0} & \cellcolor[rgb]{ .749,  .749,  .749}\textbf{2.383E+0} & \cellcolor[rgb]{ .749,  .749,  .749}\textbf{2.383E+0} & \diagbox{}{} & \cellcolor[rgb]{ .749,  .749,  .749}\textbf{2.383E+0} & \cellcolor[rgb]{ .749,  .749,  .749}\textbf{2.383E+0} \\
            \hline\hline
            \multirow{5}[1]{*}{\texttt{PMOP3}} & 2     & \cellcolor[rgb]{ .749,  .749,  .749}\textbf{0.000E+0} & 8.952E-1 & 2.062E+0 & \cellcolor[rgb]{ .749,  .749,  .749}\textbf{0.000E+0} & \cellcolor[rgb]{ .749,  .749,  .749}\textbf{0.000E+0} & \cellcolor[rgb]{ .749,  .749,  .749}\textbf{0.000E+0} & \multirow{5}[1]{*}{\texttt{PMOP9}} & 2     & \cellcolor[rgb]{ .749,  .749,  .749}\textbf{0.000E+0} & \cellcolor[rgb]{ .749,  .749,  .749}\textbf{0.000E+0} & \cellcolor[rgb]{ .749,  .749,  .749}\textbf{0.000E+0} & 6.800E-3 & \cellcolor[rgb]{ .749,  .749,  .749}\textbf{0.000E+0} & \cellcolor[rgb]{ .749,  .749,  .749}\textbf{0.000E+0} \\
            & 3     & \cellcolor[rgb]{ .749,  .749,  .749}\textbf{1.558E-1} & 9.505E-1 & 6.805E-1 & \diagbox{}{} & \cellcolor[rgb]{ .749,  .749,  .749}\textbf{1.558E-1} & \cellcolor[rgb]{ .749,  .749,  .749}\textbf{1.558E-1} &       & 3     & \cellcolor[rgb]{ .749,  .749,  .749}\textbf{7.900E-3} & 2.060E-2 & 1.714E-1 & \diagbox{}{} & \cellcolor[rgb]{ .749,  .749,  .749}\textbf{7.900E-3} & \cellcolor[rgb]{ .749,  .749,  .749}\textbf{7.900E-3} \\
            & 5     & 5.310E-1 & \cellcolor[rgb]{ .749,  .749,  .749}\textbf{4.693E-1} & 5.703E-1 & \diagbox{}{} & 5.310E-1 & 5.310E-1 &       & 5     & \cellcolor[rgb]{ .749,  .749,  .749}\textbf{6.240E-2} & 8.790E-2 & 7.085E-1 & \diagbox{}{} & \cellcolor[rgb]{ .749,  .749,  .749}\textbf{6.240E-2} & \cellcolor[rgb]{ .749,  .749,  .749}\textbf{6.240E-2} \\
            & 8     & \cellcolor[rgb]{ .749,  .749,  .749}\textbf{8.200E-1} & 8.258E-1 & 8.216E-1 & \diagbox{}{} & 8.280E-1 & 8.280E-1 &       & 8     & \cellcolor[rgb]{ .749,  .749,  .749}\textbf{4.860E-2} & 5.501E-1 & 6.700E-1 & \diagbox{}{} & \cellcolor[rgb]{ .749,  .749,  .749}\textbf{4.860E-2} & \cellcolor[rgb]{ .749,  .749,  .749}\textbf{4.860E-2} \\
            & 10    & 7.823E-1 & \cellcolor[rgb]{ .749,  .749,  .749}\textbf{7.617E-1} & 2.480E+0 & \diagbox{}{} & 7.823E-1 & 7.823E-1 &       & 10    & \cellcolor[rgb]{ .749,  .749,  .749}\textbf{9.480E-2} & 8.093E-1 & 1.310E+0 & \diagbox{}{} & \cellcolor[rgb]{ .749,  .749,  .749}\textbf{9.480E-2} & \cellcolor[rgb]{ .749,  .749,  .749}\textbf{9.480E-2} \\
            \hline\hline
            \multirow{5}[1]{*}{\texttt{PMOP4}} & 2     & \cellcolor[rgb]{ .749,  .749,  .749}\textbf{0.000E+0} & 1.095E+0 & \cellcolor[rgb]{ .749,  .749,  .749}\textbf{0.000E+0} & 3.083E-1 & \cellcolor[rgb]{ .749,  .749,  .749}\textbf{0.000E+0} & \cellcolor[rgb]{ .749,  .749,  .749}\textbf{0.000E+0} & \multirow{5}[1]{*}{\texttt{PMOP11}} & 2     & 2.000E-2 & \cellcolor[rgb]{ .749,  .749,  .749}\textbf{0.000E+0} & 2.930E-2 & 8.440E-2 & 2.000E-2 & 2.000E-2 \\
            & 3     & \cellcolor[rgb]{ .749,  .749,  .749}\textbf{0.000E+0} & \cellcolor[rgb]{ .749,  .749,  .749}\textbf{0.000E+0} & \cellcolor[rgb]{ .749,  .749,  .749}\textbf{0.000E+0} & \diagbox{}{} & \cellcolor[rgb]{ .749,  .749,  .749}\textbf{0.000E+0} & \cellcolor[rgb]{ .749,  .749,  .749}\textbf{0.000E+0} &       & 3     & \cellcolor[rgb]{ .749,  .749,  .749}\textbf{2.080E-2} & 6.043E-1 & 3.030E-2 & \diagbox{}{} & \cellcolor[rgb]{ .749,  .749,  .749}\textbf{2.080E-2} & \cellcolor[rgb]{ .749,  .749,  .749}\textbf{2.080E-2} \\
            & 5     & \cellcolor[rgb]{ .749,  .749,  .749}\textbf{4.833E-1} & 6.754E+0 & 7.918E-1 & \diagbox{}{} & 1.727E+0 & 1.727E+0 &       & 5     & \cellcolor[rgb]{ .749,  .749,  .749}\textbf{8.700E-3} & \cellcolor[rgb]{ .749,  .749,  .749}\textbf{8.700E-3} & \cellcolor[rgb]{ .749,  .749,  .749}\textbf{8.700E-3} & \diagbox{}{} & \cellcolor[rgb]{ .749,  .749,  .749}\textbf{8.700E-3} & \cellcolor[rgb]{ .749,  .749,  .749}\textbf{8.700E-3} \\
            & 8     & \cellcolor[rgb]{ .749,  .749,  .749}\textbf{1.828E+0} & 3.669E+0 & 3.669E+0 & \diagbox{}{} & 3.669E+0 & 3.669E+0 &       & 8     & 1.240E-2 & 1.240E-2 & \cellcolor[rgb]{ .749,  .749,  .749}\textbf{1.300E-3} & \diagbox{}{} & 1.240E-2 & 1.240E-2 \\
            & 10    & 1.903E+0 & \cellcolor[rgb]{ .749,  .749,  .749}\textbf{1.219E+0} & 6.457E+0 & \diagbox{}{} & 6.013E+0 & 6.013E+0 &       & 10    & \cellcolor[rgb]{ .749,  .749,  .749}\textbf{2.000E-4} & \cellcolor[rgb]{ .749,  .749,  .749}\textbf{2.000E-4} & \cellcolor[rgb]{ .749,  .749,  .749}\textbf{2.000E-4} & \diagbox{}{} & 2.900E-3 & 2.900E-3 \\
            \hline\hline
            \multirow{5}[1]{*}{\texttt{PMOP6}} & 2     & \cellcolor[rgb]{ .749,  .749,  .749}\textbf{0.000E+0} & \cellcolor[rgb]{ .749,  .749,  .749}\textbf{0.000E+0} & \cellcolor[rgb]{ .749,  .749,  .749}\textbf{0.000E+0} & \cellcolor[rgb]{ .749,  .749,  .749}\textbf{0.000E+0} & \cellcolor[rgb]{ .749,  .749,  .749}\textbf{0.000E+0} & \cellcolor[rgb]{ .749,  .749,  .749}\textbf{0.000E+0} & \multirow{5}[1]{*}{\texttt{PMOP12}} & 2     & \cellcolor[rgb]{ .749,  .749,  .749}\textbf{0.000E+0} & 1.757E-1 & 8.326E-1 & \cellcolor[rgb]{ .749,  .749,  .749}\textbf{0.000E+0} & \cellcolor[rgb]{ .749,  .749,  .749}\textbf{0.000E+0} & \cellcolor[rgb]{ .749,  .749,  .749}\textbf{0.000E+0} \\
            & 3     & \cellcolor[rgb]{ .749,  .749,  .749}\textbf{4.100E-2} & 3.230E-1 & 6.913E-1 & \diagbox{}{} & \cellcolor[rgb]{ .749,  .749,  .749}\textbf{4.100E-2} & \cellcolor[rgb]{ .749,  .749,  .749}\textbf{4.100E-2} &       & 3     & \cellcolor[rgb]{ .749,  .749,  .749}\textbf{4.070E-2} & 7.250E-2 & 1.036E-1 & \diagbox{}{} & \cellcolor[rgb]{ .749,  .749,  .749}\textbf{4.070E-2} & \cellcolor[rgb]{ .749,  .749,  .749}\textbf{4.070E-2} \\
            & 5     & \cellcolor[rgb]{ .749,  .749,  .749}\textbf{3.290E-2} & \cellcolor[rgb]{ .749,  .749,  .749}\textbf{3.290E-2} & 4.297E-1 & \diagbox{}{} & \cellcolor[rgb]{ .749,  .749,  .749}\textbf{3.290E-2} & \cellcolor[rgb]{ .749,  .749,  .749}\textbf{3.290E-2} &       & 5     & \cellcolor[rgb]{ .749,  .749,  .749}\textbf{2.580E-2} & 3.180E-2 & 6.464E-1 & \diagbox{}{} & 2.770E-2 & 2.770E-2 \\
            & 8     & \cellcolor[rgb]{ .749,  .749,  .749}\textbf{1.199E-1} & 7.373E-1 & 1.831E+0 & \diagbox{}{} & \cellcolor[rgb]{ .749,  .749,  .749}\textbf{1.199E-1} & \cellcolor[rgb]{ .749,  .749,  .749}\textbf{1.199E-1} &       & 8     & 8.000E-3 & 8.500E-3 & \cellcolor[rgb]{ .749,  .749,  .749}\textbf{7.200E-3} & \diagbox{}{} & 8.000E-3 & 8.000E-3 \\
            & 10    & \cellcolor[rgb]{ .749,  .749,  .749}\textbf{5.638E-1} & 5.865E-1 & 2.756E+0 & \diagbox{}{} & \cellcolor[rgb]{ .749,  .749,  .749}\textbf{5.638E-1} & \cellcolor[rgb]{ .749,  .749,  .749}\textbf{5.638E-1} &       & 10    & \cellcolor[rgb]{ .749,  .749,  .749}\textbf{5.000E-3} & 6.400E-3 & 1.040E-2 & \diagbox{}{} & \cellcolor[rgb]{ .749,  .749,  .749}\textbf{5.000E-3} & \cellcolor[rgb]{ .749,  .749,  .749}\textbf{5.000E-3} \\
            \hline
        \end{tabular}}%
        \label{tab:global_knee_results}
\end{table*}

Note that PMOP5 and PMOP10 are not considered in this experiment since they cannot be set to have only one global knee point. In addition, the discussion of experimental results on PMOP13 and PMOP14 is left in~\pref{sec:degenerate_knee_results} due to their degeneration characteristics.

\begin{figure}[htbp]
    \centering
    \includegraphics[width=.4\linewidth]{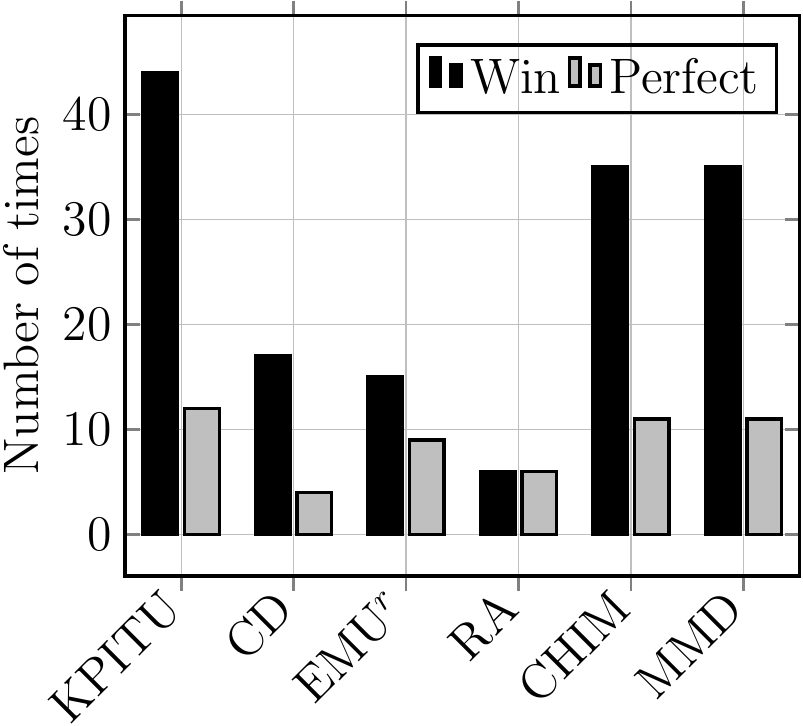}
    \caption{Bar graphs of the number of times that different algorithms obtain the best $\mathbb{I}(\mathrm{S})$ metric values and perfectly identify the global knee point.}
    \label{fig:bar_graph_g}
\end{figure}

According to the comparison results shown in~\pref{tab:global_knee_results} and the bar graphs shown in~\pref{fig:bar_graph_g}, we can clearly see that KPITU, CHIM, MMD are the most competitive algorithms. From some selected examples shown in Figs.~\ref{fig:PMOP7_M2_A1} to~\ref{fig:PMOP9_M10_A2}\footnote{The complete set of plots of population distribution can be found from the supplementary document \url{https://cola-laboratory.github.io/supp-kpi.pdf}.}, we can see that these three KPI methods are able to find a meaningful knee point even for 10-objective problems. In particular, KPITU is the best algorithm that obtains the best $\mathbb{I}(\mathrm{S})$ metric values on 44 out of 54 ($\approx$ 82\%) test problem instances and it perfectly identifies the knee point on 12 test problem instances. On the other hand, RA is the worst algorithm and it is only applicable for the two-objective problems. Its inferior performance might be because the selection angle is determined by neighbouring solutions. Therefore, the performance of RA highly depend on the solution distribution of the given trade-off solution set, i.e., $\mathrm{S}$ in our experiments. The performance of CD is not promising comparing to KPITU, CHIM and MMD. This can be partially attributed to the setting of its control parameter $\varphi$ which is problem dependent. Furthermore, since CD is an extension of the Pareto dominance, its performance degenerates with the increase of the number of objectives. As for EMU$^r$, its performance is similar to CD. This might be because the evaluation of EMU$^r$ depends on the choice of the weight vectors. It is interesting to notice that all KPI methods failed to find the knee point on the PMOP2 problem as an example shown in~\pref{fig:PMOP2_M2_A2}. In particular, we can see that KPITU, CHIM and MMD end up with the singular solution lying on the $f_2$ axis. According to the mechanism of KPITU, this solution should be a knee point as no other solution lying in its neighbourhood.

\begin{figure*}[htbp]
    \includegraphics[width=1.0\linewidth]{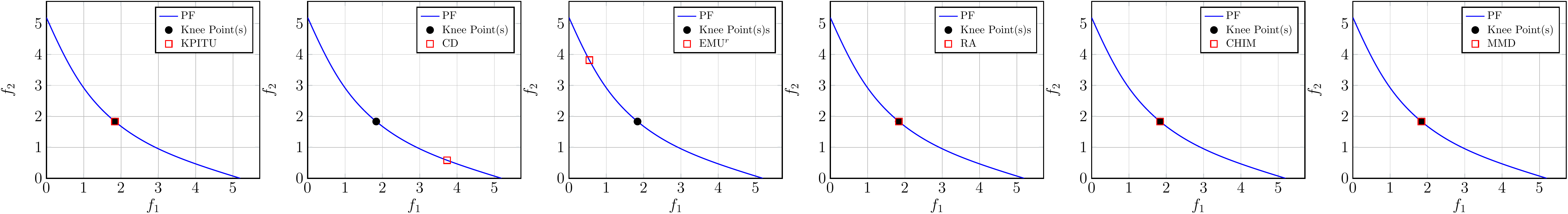}
    \caption{Knee points identified by different KPI methods on 2-objective PMOP7 with one global knee point.}
    \label{fig:PMOP7_M2_A1}
\end{figure*}

\begin{figure*}[htbp]
    \includegraphics[width=1.0\linewidth]{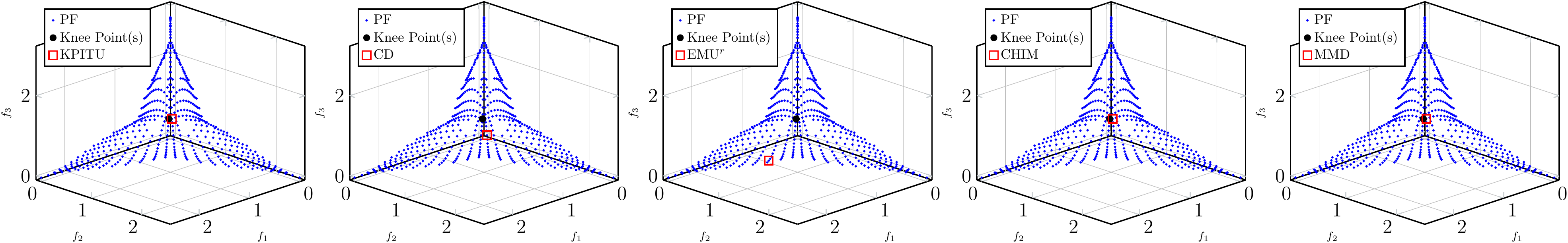}
    \caption{Knee points identified by different KPI methods on 3-objective PMOP6 with one global knee point.}
    \label{fig:PMOP6_M3_A2}
\end{figure*}

\begin{figure*}[htbp]
    \includegraphics[width=1.0\linewidth]{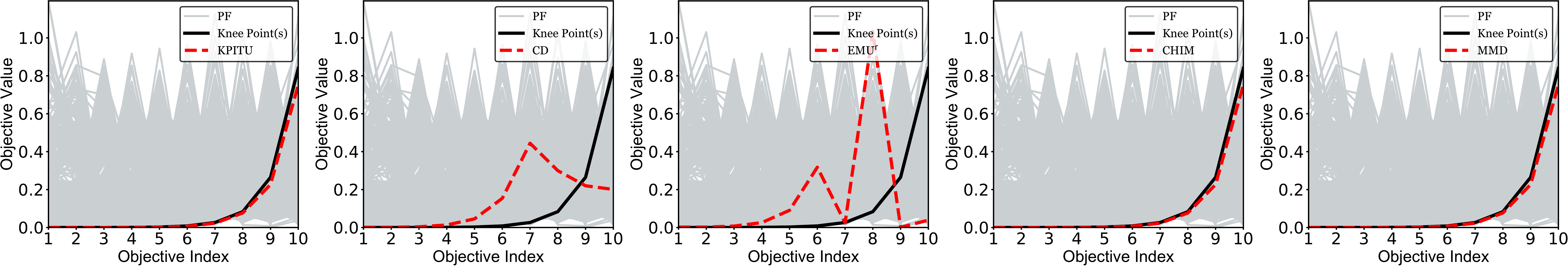}
    \caption{Knee points identified by different KPI methods on 10-objective PMOP9 with one global knee point.}
    \label{fig:PMOP9_M10_A2}
\end{figure*}

\begin{figure*}[htbp]
    \includegraphics[width=1.0\linewidth]{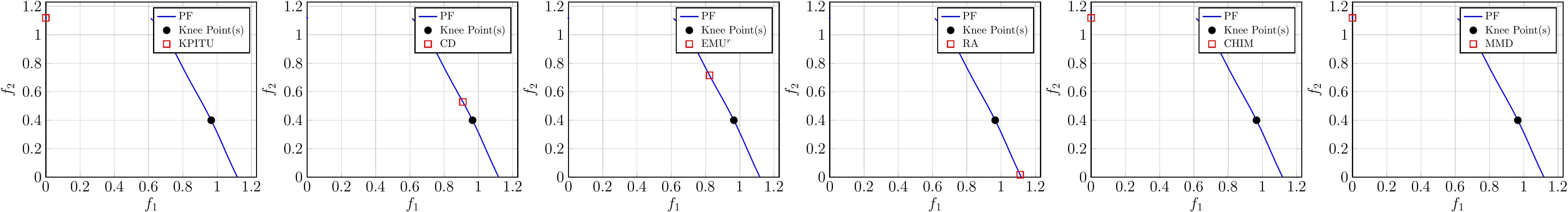}
    \caption{Knee points identified by different KPI methods on 2-objective PMOP2 with one global knee point.}
    \label{fig:PMOP2_M2_A2}
\end{figure*}

It is interesting, even surprising, to note that the performance of CHIM is exactly the same as that of MMD as shown in~\pref{tab:global_knee_results}. Let us analyse the working principles of these two KPI methods as follows. The basic idea of CHIM is to find the solution from $\mathrm{P}$ having the largest perpendicular distance to a hyperplane. Specifically, an affine hyperplane can be represented as:
\begin{equation}
    a_1f_1(\mathbf{x})+a_2f_2(\mathbf{x})+\cdots+a_mf_m(\mathbf{x})=b,
\end{equation}
where $\mathbf{x}\in\mathrm{P}$, $a_i>0$, $i\in\{1,\cdots,m\}$ and $b>0$. The perpendicular distance between $\mathbf{x}$ to the affine hyperplane is calculated as:
\begin{equation}
    \frac{|a_1f_1(\mathbf{x})+a_2f_2(\mathbf{x})+\cdots+a_mf_m(\mathbf{x})-b|}{\sqrt{a_1^2+a_2^2+\cdots+a_m^2}}.
\end{equation}
In practice, the objective values are normalised to $[0,1]$, thus we have $b=1$ and $a_i=1$ for $i\in\{1,\cdots,m\}$. Accordingly, the knee point is determined as:
\begin{equation}\label{eq:new_chim}
    \argmax_{\mathbf{x}\in\mathrm{P}}\frac{|f_1(\mathbf{x})+f_2(\mathbf{x})+\cdots+f_m(\mathbf{x})-1|}{\sqrt{m}}.
\end{equation}
As for MMD, the knee point is identified according to the~\pref{eq:mmd} where the ideal point $\mathbf{z}^{\ast}$ is set to the origin in practice. In this case, the \pref{eq:mmd} can be re-written as:
\begin{equation}\label{eq:new_mmd}
    \argmin_{\mathbf{x}\in S}\|\mathbf{F}(\mathbf{x})\|_1=|f_1(\mathbf{x})+f_2(\mathbf{x})+\cdots+f_m(\mathbf{x})|.
\end{equation}
Comparing equations~(\ref{eq:new_chim}) and~(\ref{eq:new_mmd}), we can infer that they are in effect identical except some constant terms. This explain the same behaviour of CHIM and MMD observed in~\pref{tab:global_knee_results}.

\subsubsection{Results on Problems with Local Knee Points}
\label{sec:local_knee_results}

\begin{table*}[htbp]
    \centering
    \caption{Comparison Results of KPITU with Other Four KPI Methods on Problems with Local Knee Points}
    \resizebox{\textwidth}{!}{
        \begin{tabular}{c|c|cccccc||c|c|cccccc}
            \cline{2-8}\cline{10-16}
            & $m$     & \texttt{KPITU} & \texttt{CD}    & \texttt{EMU$^r$}  & \texttt{RA}    & \texttt{CHIM}  & \texttt{MMD}   &       & $m$     & \texttt{KPITU} & \texttt{CD}    & \texttt{EMU$^r$}  & \texttt{RA}    & \texttt{CHIM}  & \texttt{MMD} \\
            \hline
            \texttt{DO2DK} & 2     & \cellcolor[rgb]{ .749,  .749,  .749}\textbf{1.250E-1} & 7.250E-1 & 4.116E-1 & 1.204E+0 & 1.228E+0 & 1.228E+0 & \texttt{CKP}   & 2     & \cellcolor[rgb]{ .749,  .749,  .749}\textbf{4.210E-1} & 2.643E+0 & 4.690E-1 & 1.625E+0 & 2.474E+0 & 2.474E+0 \\
            \hline\hline
            \texttt{DEB2DK} & 2     & \cellcolor[rgb]{ .749,  .749,  .749}\textbf{0.000E+0} & 1.488E+0 & \multicolumn{1}{r}{2.268E-1} & \multicolumn{1}{r}{1.428E-1} & \cellcolor[rgb]{ .749,  .749,  .749}\textbf{0.000E+0} & \cellcolor[rgb]{ .749,  .749,  .749}\textbf{0.000E+0} & \texttt{DEB3DK} & 3     & \cellcolor[rgb]{ .749,  .749,  .749}\textbf{3.549E-1} & 2.264E+0 & 3.656E-1 & \diagbox{}{} & 1.836E+0 & 1.836E+0 \\
            \hline\hline
            \multirow{5}[1]{*}{\texttt{PMOP1}} & 2     & \cellcolor[rgb]{ .749,  .749,  .749}\textbf{0.000E+0} & 6.580E-2 & 6.940E-2 & 1.050E-2 & \cellcolor[rgb]{ .749,  .749,  .749}\textbf{0.000E+0} & \cellcolor[rgb]{ .749,  .749,  .749}\textbf{0.000E+0} & \multirow{5}[1]{*}{\texttt{PMOP7}} & 2     & \cellcolor[rgb]{ .749,  .749,  .749}\textbf{0.000E+0} & 2.804E-1 & 3.453E-1 & 1.542E+0 & \cellcolor[rgb]{ .749,  .749,  .749}\textbf{0.000E+0} & \cellcolor[rgb]{ .749,  .749,  .749}\textbf{0.000E+0} \\
            & 3     & \cellcolor[rgb]{ .749,  .749,  .749}\textbf{0.000E+0} & 3.887E-1 & 6.650E-2 & \diagbox{}{} & \cellcolor[rgb]{ .749,  .749,  .749}\textbf{0.000E+0} & \cellcolor[rgb]{ .749,  .749,  .749}\textbf{0.000E+0} &       & 3     & \cellcolor[rgb]{ .749,  .749,  .749}\textbf{9.930E-2} & 2.357E-1 & 1.141E+0 & \diagbox{}{} & 9.324E-1 & 9.324E-1 \\
            & 5     & \cellcolor[rgb]{ .749,  .749,  .749}\textbf{2.492E-1} & 6.594E-1 & 9.071E-1 & \diagbox{}{} & 1.261E+0 & 1.261E+0 &       & 5     & \cellcolor[rgb]{ .749,  .749,  .749}\textbf{1.285E-1} & 5.797E-1 & 2.917E-1 & \diagbox{}{} & 8.692E-1 & 8.692E-1 \\
            & 8     & \cellcolor[rgb]{ .749,  .749,  .749}\textbf{6.549E-1} & 9.906E-1 & 2.409E+0 & \diagbox{}{} & 1.113E+0 & 1.113E+0 &       & 8     & \cellcolor[rgb]{ .749,  .749,  .749}\textbf{1.618E-1} & 2.506E-1 & 2.499E-1 & \diagbox{}{} & 6.472E-1 & 6.472E-1 \\
            & 10    & 1.017E+0 & \cellcolor[rgb]{ .749,  .749,  .749}\textbf{8.146E-1} & 2.151E+0 & \diagbox{}{} & 1.273E+0 & 1.273E+0 &       & 10    & \cellcolor[rgb]{ .749,  .749,  .749}\textbf{1.732E-1} & 1.750E-1 & 1.844E-1 & \diagbox{}{} & 2.171E-1 & 2.171E-1 \\
            \hline
            \hline
            \multirow{5}[1]{*}{\texttt{PMOP2}} & 2     & \cellcolor[rgb]{ .749,  .749,  .749}\textbf{4.032E-2} & 5.685E-1 & 1.235E-1 & 3.991E-1 & 4.068E-1 & 4.068E-1 & \multirow{5}[1]{*}{\texttt{PMOP8}} & 2     & \cellcolor[rgb]{ .749,  .749,  .749}\textbf{3.300E-2} & \cellcolor[rgb]{ .749,  .749,  .749}\textbf{3.300E-2} & 4.470E-2 & 6.080E-1 & \cellcolor[rgb]{ .749,  .749,  .749}\textbf{3.300E-2} & \cellcolor[rgb]{ .749,  .749,  .749}\textbf{3.300E-2} \\
            & 3     & \cellcolor[rgb]{ .749,  .749,  .749}\textbf{8.310E-2} & 4.176E-1 & 3.370E-1 & \diagbox{}{} & 8.223E-1 & 8.223E-1 &       & 3     & \cellcolor[rgb]{ .749,  .749,  .749}\textbf{3.921E-1} & 4.426E-1 & 4.456E-1 & \diagbox{}{} & 5.418E-1 & 5.418E-1 \\
            & 5     & \cellcolor[rgb]{ .749,  .749,  .749}\textbf{2.179E-1} & 2.406E-1 & 2.879E-1 & \diagbox{}{} & 3.093E-1 & 3.093E-1 &       & 5     & \cellcolor[rgb]{ .749,  .749,  .749}\textbf{5.988E-1} & 6.083E-1 & 6.196E-1 & \diagbox{}{} & 6.627E-1 & 6.627E-1 \\
            & 8     & 1.720E-1 & 2.055E-1 & 2.108E-1 & \diagbox{}{} & \cellcolor[rgb]{ .749,  .749,  .749}\textbf{1.644E-1} & \cellcolor[rgb]{ .749,  .749,  .749}\textbf{1.644E-1} &       & 8     & 4.416E-1 & \cellcolor[rgb]{ .749,  .749,  .749}\textbf{4.382E-1} & 4.388E-1 & \diagbox{}{} & 4.872E-1 & 4.872E-1 \\
            & 10    & \cellcolor[rgb]{ .749,  .749,  .749}\textbf{1.315E-1} & 1.505E-1 & 1.700E-1 & \diagbox{}{} & 1.544E-1 & 1.544E-1 &       & 10    & \cellcolor[rgb]{ .749,  .749,  .749}\textbf{4.239E-1} & 4.263E-1 & 4.855E-1 & \diagbox{}{} & 4.418E-1 & 4.418E-1 \\
            \hline
            \hline
            \multirow{5}[1]{*}{\texttt{PMOP3}} & 2     & 6.301E-1 & 1.432E+0 & \cellcolor[rgb]{ .749,  .749,  .749}\textbf{2.350E-1} & 4.594E-1 & 6.301E-1 & 6.301E-1 & \multirow{5}[1]{*}{\texttt{PMOP9}} & 2     & 2.820E-1 & 2.711E-1 & \cellcolor[rgb]{ .749,  .749,  .749}\textbf{2.610E-1} & 5.995E-1 & 6.096E-1 & 6.096E-1 \\
            & 3     & \cellcolor[rgb]{ .749,  .749,  .749}\textbf{2.029E-1} & 4.839E-1 & 1.040E+0 & \diagbox{}{} & 1.036E+0 & 1.036E+0 &       & 3     & \cellcolor[rgb]{ .749,  .749,  .749}\textbf{0.000E+0} & 6.310E-2 & 1.286E-1 & \diagbox{}{} & 3.206E-1 & 3.206E-1 \\
            & 5     & 4.636E-1 & 7.512E-1 & \cellcolor[rgb]{ .749,  .749,  .749}\textbf{2.833E-1} & \diagbox{}{} & 8.905E-1 & 8.905E-1 &       & 5     & \cellcolor[rgb]{ .749,  .749,  .749}\textbf{5.930E-2} & 1.656E-1 & 2.604E-1 & \diagbox{}{} & 2.002E-1 & 2.002E-1 \\
            & 8     & 5.023E-1 & \cellcolor[rgb]{ .749,  .749,  .749}\textbf{1.302E-1} & 5.103E-1 & \diagbox{}{} & 5.318E-1 & 5.318E-1 &       & 8     & 7.613E-1 & \cellcolor[rgb]{ .749,  .749,  .749}\textbf{6.262E-1} & 6.887E-1 & \diagbox{}{} & 6.772E-1 & 6.772E-1 \\
            & 10    & \cellcolor[rgb]{ .749,  .749,  .749}\textbf{6.930E-2} & 1.188E-1 & 2.884E-1 & \diagbox{}{} & 3.792E-1 & 3.792E-1 &       & 10    & \cellcolor[rgb]{ .749,  .749,  .749}\textbf{8.439E-1} & 9.118E-1 & 8.875E-1 & \diagbox{}{} & 8.804E-1 & 8.804E-1 \\
            \hline\hline
            \multirow{5}[1]{*}{\texttt{PMOP4}} & 2     & \cellcolor[rgb]{ .749,  .749,  .749}\textbf{0.000E+0} & 2.905E-1 & \cellcolor[rgb]{ .749,  .749,  .749}\textbf{0.000E+0} & 5.921E-1 & 5.445E-1 & 5.445E-1 & \multirow{5}[2]{*}{\texttt{PMOP10}} & 2     & \cellcolor[rgb]{ .749,  .749,  .749}\textbf{0.000E+0} & 2.037E-1 & 1.500E-2 & 7.751E-1 & \cellcolor[rgb]{ .749,  .749,  .749}\textbf{0.000E+0} & \cellcolor[rgb]{ .749,  .749,  .749}\textbf{0.000E+0} \\
            & 3     & \cellcolor[rgb]{ .749,  .749,  .749}\textbf{3.110E-2} & 7.777E-1 & 3.905E-1 & \diagbox{}{} & 9.645E-1 & 9.645E-1 &       & 3     & \cellcolor[rgb]{ .749,  .749,  .749}\textbf{3.480E-2} & 8.820E-2 & 5.140E-2 & \diagbox{}{} & 1.342E-1 & 1.342E-1 \\
            & 5     & \cellcolor[rgb]{ .749,  .749,  .749}\textbf{3.705E-1} & 6.180E-1 & 1.203E+0 & \diagbox{}{} & 1.293E+0 & 1.293E+0 &       & 5     & \cellcolor[rgb]{ .749,  .749,  .749}\textbf{1.194E-1} & 1.730E-1 & 1.730E-1 & \diagbox{}{} & 1.531E-1 & 1.531E-1 \\
            & 8     & 1.347E+0 & 2.099E+0 & 1.374E+0 & \diagbox{}{} & \cellcolor[rgb]{ .749,  .749,  .749}\textbf{1.180E+0} & \cellcolor[rgb]{ .749,  .749,  .749}\textbf{1.180E+0} &       & 8     & \cellcolor[rgb]{ .749,  .749,  .749}\textbf{5.460E-2} & 5.470E-2 & 8.060E-2 & \diagbox{}{} & 7.710E-2 & 7.710E-2 \\
            & 10    & 1.801E+0 & 2.174E+0 & 1.783E+0 & \diagbox{}{} & \cellcolor[rgb]{ .749,  .749,  .749}\textbf{1.620E+0} & \cellcolor[rgb]{ .749,  .749,  .749}\textbf{1.620E+0} &       & 10    & \cellcolor[rgb]{ .749,  .749,  .749}\textbf{2.600E-2} & 3.230E-2 & 3.730E-2 & \diagbox{}{} & 3.460E-2 & 3.460E-2 \\
            \hline\hline
            \multirow{5}[1]{*}{\texttt{PMOP5}} & 2     & \cellcolor[rgb]{ .749,  .749,  .749}\textbf{0.000E+0} & 1.201E-1 & 1.920E-2 & 4.580E-1 & 1.170E-1 & 1.170E-1 & \multirow{5}[1]{*}{\texttt{PMOP11}} & 2     & \cellcolor[rgb]{ .749,  .749,  .749}\textbf{1.150E-2} & 1.887E-2 & 1.370E-2 & 1.980E-1 & 1.942E-1 & 1.942E-1 \\
            & 3     & \cellcolor[rgb]{ .749,  .749,  .749}\textbf{1.260E-2} & 1.008E+0 & 1.660E-2 & \diagbox{}{} & 7.320E-2 & 7.320E-2 &       & 3     & \cellcolor[rgb]{ .749,  .749,  .749}\textbf{1.070E-2} & 2.370E-1 & 1.774E-1 & \diagbox{}{} & 2.327E-1 & 2.327E-1 \\
            & 5     & \cellcolor[rgb]{ .749,  .749,  .749}\textbf{1.480E-2} & 3.220E-2 & 6.500E-2 & \diagbox{}{} & 4.940E-2 & 4.940E-2 &       & 5     & 7.960E-2 & \cellcolor[rgb]{ .749,  .749,  .749}\textbf{7.230E-2} & 1.488E-1 & \diagbox{}{} & 1.867E-1 & 1.867E-1 \\
            & 8     & \cellcolor[rgb]{ .749,  .749,  .749}\textbf{4.800E-3} & 2.150E-2 & 5.400E-3 & \diagbox{}{} & 9.000E-3 & 9.000E-3 &       & 8     & 8.670E-2 & 8.730E-2 & \cellcolor[rgb]{ .749,  .749,  .749}\textbf{8.630E-2} & \diagbox{}{} & 9.320E-2 & 9.320E-2 \\
            & 10    & \cellcolor[rgb]{ .749,  .749,  .749}\textbf{3.600E-3} & 3.700E-3 & 4.100E-3 & \diagbox{}{} & 3.700E-3 & 3.700E-3 &       & 10    & 1.500E-2 & 1.470E-2 & \cellcolor[rgb]{ .749,  .749,  .749}\textbf{1.450E-2} & \diagbox{}{} & \cellcolor[rgb]{ .749,  .749,  .749}\textbf{1.450E-2} & \cellcolor[rgb]{ .749,  .749,  .749}\textbf{1.450E-2} \\
            \hline\hline
            \multirow{5}[1]{*}{\texttt{PMOP6}} & 2     & 6.190E-2 & 3.471E-1 & \cellcolor[rgb]{ .749,  .749,  .749}\textbf{2.920E-2} & 7.952E-1 & 8.215E-1 & 8.215E-1 & \multirow{5}[1]{*}{\texttt{PMOP12}} & 2     & 7.780E-2 & 1.401E-1 & \cellcolor[rgb]{ .749,  .749,  .749}\textbf{2.420E-2} & 8.350E-2 & 7.780E-2 & 7.780E-2 \\
            & 3     & \cellcolor[rgb]{ .749,  .749,  .749}\textbf{5.230E-2} & 2.542E-1 & \cellcolor[rgb]{ .749,  .749,  .749}\textbf{5.230E-2} & \diagbox{}{} & 6.637E-1 & 6.637E-1 &       & 3     & \cellcolor[rgb]{ .749,  .749,  .749}\textbf{3.200E-2} & 3.590E-2 & 3.590E-2 & \diagbox{}{} & 9.060E-2 & 9.060E-2 \\
            & 5     & \cellcolor[rgb]{ .749,  .749,  .749}\textbf{2.076E-1} & 3.922E-1 & 3.266E-1 & \diagbox{}{} & 2.600E-1 & 2.600E-1 &       & 5     & \cellcolor[rgb]{ .749,  .749,  .749}\textbf{4.400E-3} & 6.100E-3 & 4.900E-3 & \diagbox{}{} & 2.160E-2 & 2.160E-2 \\
            & 8     & 1.578E+0 & 1.671E+0 & \cellcolor[rgb]{ .749,  .749,  .749}\textbf{1.408E+0} & \diagbox{}{} & 1.596E+0 & 1.596E+0 &       & 8     & \cellcolor[rgb]{ .749,  .749,  .749}\textbf{1.000E-3} & 7.500E-3 & 3.800E-3 & \diagbox{}{} & 1.000E-2 & 1.000E-2 \\
            & 10    & \cellcolor[rgb]{ .749,  .749,  .749}\textbf{1.194E+2} & 1.220E+2 & 1.292E+2 & \diagbox{}{} & 1.250E+2 & 1.250E+2 &       & 10    & \cellcolor[rgb]{ .749,  .749,  .749}\textbf{6.000E-4} & 7.000E-4 & 7.000E-4 & \diagbox{}{} & 4.200E-3 & 4.200E-3 \\
            \hline
        \end{tabular}}%
        \label{tab:local_knee_results}%
\end{table*}

All benchmark test problems introduced in~\pref{sec:problems}, except PMOP13 and PMOP14, are able to set to have $2^{m-1}$ local knee points. From the comparison results shown in~\pref{tab:local_knee_results} and the bar graphs shown in~\pref{fig:bar_graph_l}, we clearly observe the superiority of our proposed KPITU against the other five state-of-the-art KPI methods on problems with more than one knee point. In particular, KPITU shows the best performance on 48 out of 64 ($\approx$ 75\%) test problem instances, 8 of which were perfectly approximated. Due to the increase of the number of knee points, we can envisage the decreased chance for perfectly approximating all local knee points comparing to the case of having only one global knee point. As discussed in~\pref{sec:global_knee_results}, the performance of CHIM and MMD are still the same when having many local knee points. However, different from the observations in~\pref{sec:global_knee_results}, they become less competitive with the increase of the number of knee points. In particular, they are similar to that of CD and EMU$^r$ in this case. This might be because both of CHIM and MMD are designed to find the point that maximises the distance to a hyperplane formed by extreme points. By this means, they normally ignore the local information along the PF. RA is still the worst KPI method and it is not scalable to problems with more than two objectives. Comparing to the results on finding the global knee point, the performance of EMU$^r$ becomes relatively more competitive on problems having many local knee points. This might be the benefit of using weight vectors which always leads to the identification of some local knee points, especially when the underlying benchmark test problem has a regular PF similar to a simplex. From two selected examples shown in~\pref{fig:PMOP5_M2_A2} and~\pref{fig:PMOP2_M3_A4}, we can see that only our KPITU make a reasonable approximation to all local knee points whilst the \lq knee points\rq\ found by the other KPI methods are deviated from the ground truth. Furthermore, from the example shown in~\pref{fig:PMOP3_M10_A4}, we also appreciate that approximating multiple knee points in the high-dimensional space becomes more difficult due to the sparse distribution of knee points along with the huge objective space.

\begin{figure}[htbp]
    \centering
    \includegraphics[width=.4\linewidth]{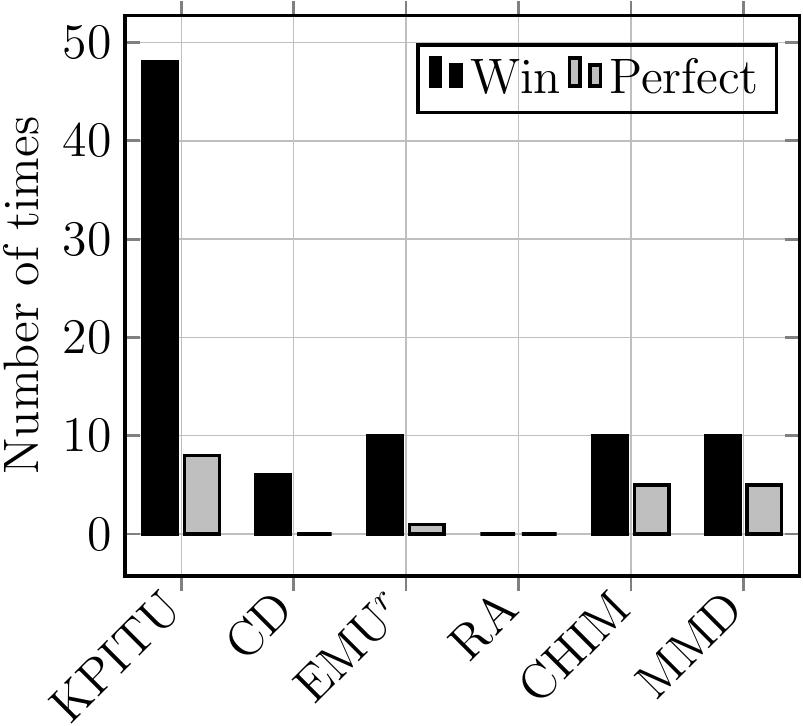}
    \caption{Bar graphs of the number of times that different algorithms obtain the best $\mathbb{I}(\mathrm{S})$ metric values and perfectly identify all local knee points.}
    \label{fig:bar_graph_l}
\end{figure}

\begin{figure*}[htbp]
    \includegraphics[width=1.0\linewidth]{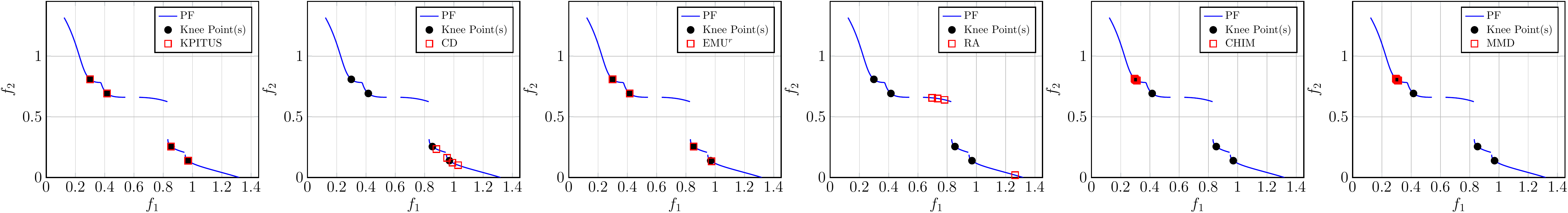}
    \caption{Knee points identified by different KPI methods on 2-objective PMOP5 with local knee points.}
    \label{fig:PMOP5_M2_A2}
\end{figure*}

\begin{figure*}[htbp]
    \includegraphics[width=1.0\linewidth]{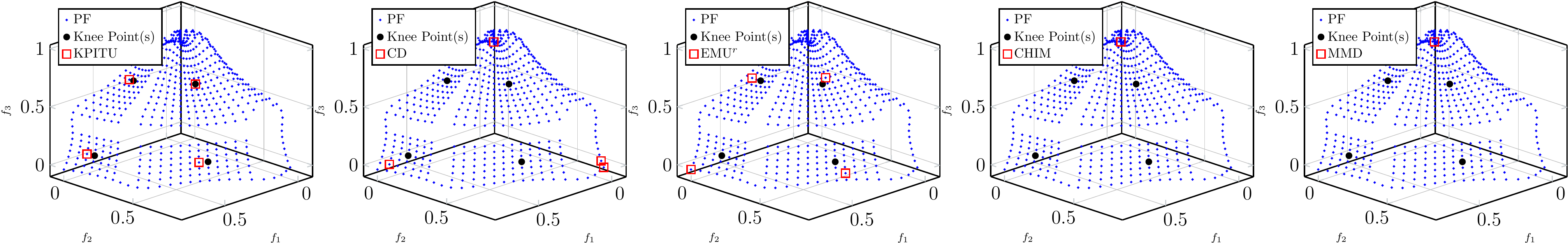}
    \caption{Knee points identified by different KPI methods on 2-objective PMOP2 with local knee points.}
    \label{fig:PMOP2_M3_A4}
\end{figure*}

\begin{figure*}[htbp]
    \includegraphics[width=1.0\linewidth]{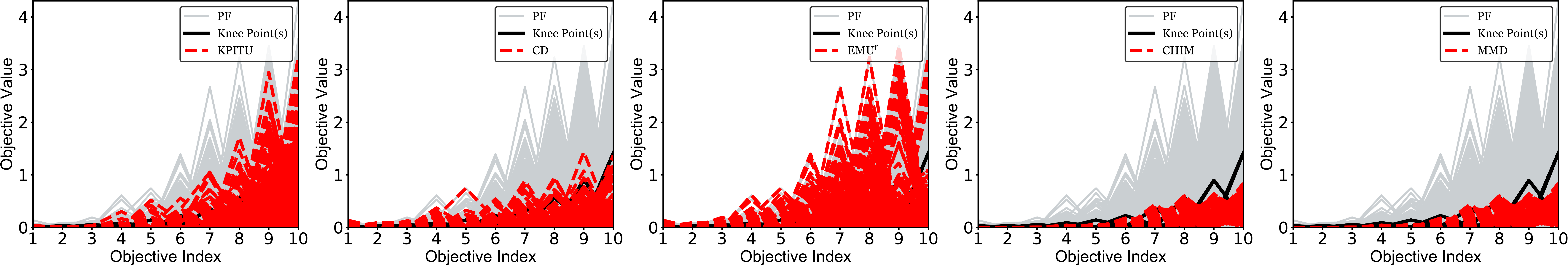}
    \caption{Knee points identified by different KPI methods on 10-objective PMOP3 with local knee points.}
    \label{fig:PMOP3_M10_A4}
\end{figure*}

\subsubsection{Results on Problems with Degenerate Knee Region(s)}
\label{sec:degenerate_knee_results}

In this subsection, we discuss the experimental results on PMOP13 and PMOP14 that have degenerate knee region(s). Different from the other benchmar test problems considered in the previous subsections, PMOP13 and PMOP14 are featured with an infinite number of knee points located at the corresponding knee region(s). As discussed in~\cite{YuJO19a}, the degenerate characteristics of PMOP13 and PMOP14 are caused by decoupling the trade-off relationship between some chosen objective functions with others.

From the comparison results given in~\pref{tab:PMOP1314} and the bar graphs shown in~\pref{fig:bar_graph_d}, it is clear to see that our proposed KPITU is the most competitive KPI method even when having infinitely many knee points. In particular, it obtains the best $\mathbb{I}(\mathrm{S})$ metric values on 12 out of 16 (75\%) test problem instances where 3 of them have been perfectly identified. In contrast, EMU$^r$ is the worst KPI method that mistakenly identifies many points outside the knee region. This might be because EMU$^r$ highly relies on the use of weight vectors evenly sampled from a canonical simplex whereas the shapes of the PF of neither PMOP13 nor PMOP14 follows a canonical simplex. Figs.~\ref{fig:PMOP13_M3_A1} to~\ref{fig:PMOP13_M5_A1} give some selected examples of knee points identified by different KPI methods. In particular, as for the 3-objective PMOP13 with two knee regions shown in~\pref{fig:PMOP13_M3_A3}, we can see that both CHIM and MMD can only identify knee points within one knee region. It is also interesting to see that all KPI methods, except EMU$^r$, perfectly identify the knee points on 5-objective PMOP13 with one knee region shown in~\pref{fig:PMOP13_M5_A1}.
\begin{figure}[htbp]
    \centering
    \includegraphics[width=.4\linewidth]{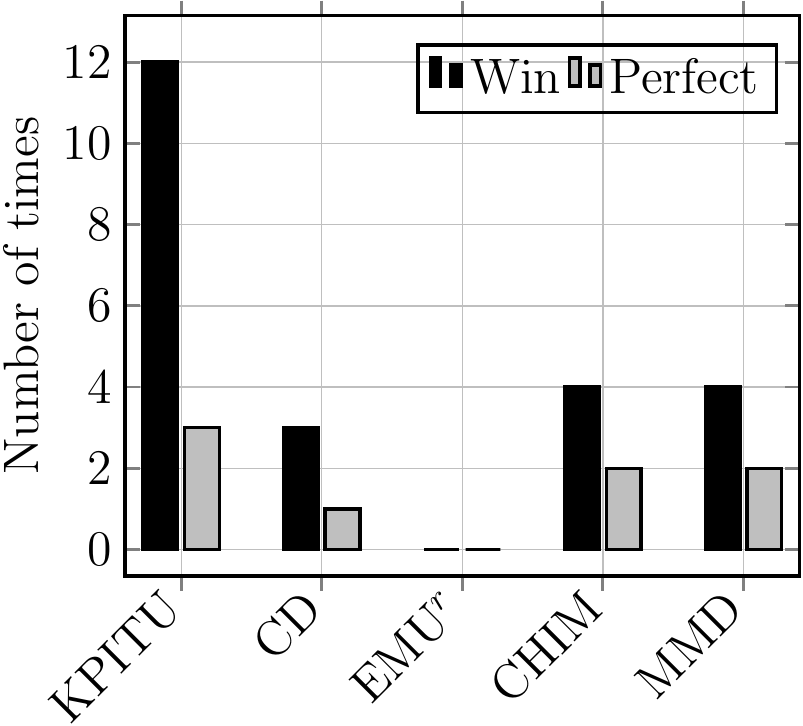}
    \caption{Bar graphs of the number of times that different algorithms obtain the best $\mathbb{I}(\mathrm{S})$ metric values and perfectly identify all knee points with in the underlying knee region(s).}
    \label{fig:bar_graph_d}
\end{figure}

\begin{table*}[htbp]
    \centering
    \caption{Comparison Results of KPITU with Other Four KPI Methods on Problems with Infinitely Many Knee Points}
    \resizebox{\textwidth}{!}{
    \begin{tabular}{c|c|ccccc||c|c|ccccc}
        \cline{2-7}\cline{9-14}
        & $m$     & \texttt{KPITU} & \texttt{CD}    & \texttt{EMU$^r$}  & \texttt{CHIM}  & \texttt{MMD}   &       & $m$     & \texttt{KPITU} & \texttt{CD}    & \texttt{EMU$^r$}  & \texttt{CHIM}  & \texttt{MMD} \\
        \hline
        \multirow{3}[4]{*}{\texttt{PMOP13$_1$}} & 3     & \cellcolor[rgb]{ .749,  .749,  .749}\textbf{0.000E+0} & 9.900E-2 & 1.420E-1 & 1.636E-3 & 1.636E-3 & \multirow{3}[4]{*}{\texttt{PMOP14$_1$}} & 3     & \cellcolor[rgb]{ .749,  .749,  .749}\textbf{0.000E+0} & 3.879E-1 & 5.476E-1 & 1.700E-3 & 1.700E-3 \\
        & 5     & \cellcolor[rgb]{ .749,  .749,  .749}\textbf{0.000E+0} & \cellcolor[rgb]{ .749,  .749,  .749}\textbf{0.000E+0} & 1.890E-1 & \cellcolor[rgb]{ .749,  .749,  .749}\textbf{0.000E+0} & \cellcolor[rgb]{ .749,  .749,  .749}\textbf{0.000E+0} &       & 5     & \cellcolor[rgb]{ .749,  .749,  .749}\textbf{3.380E-2} & 1.174E-1 & 2.778E-1 & 1.367E-1 & 1.367E-1 \\
        & 8     & \cellcolor[rgb]{ .749,  .749,  .749}\textbf{4.210E-1} & 4.561E-1 & 9.177E-1 & 4.842E-1 & 4.842E-1 &       & 8     & \cellcolor[rgb]{ .749,  .749,  .749}\textbf{7.990E-2} & 1.010E-1 & 2.052E-1 & 8.130E-2 & 8.130E-2 \\
        & 10    & \cellcolor[rgb]{ .749,  .749,  .749}\textbf{4.305E-1} & 4.541E-1 & 5.432E-1 & 4.564E-1 & 4.564E-1 &       & 10    & 8.687E-3 & \cellcolor[rgb]{ .749,  .749,  .749}\textbf{2.361E-3} & 9.214E-3 & \cellcolor[rgb]{ .749,  .749,  .749}\textbf{2.361E-3} & \cellcolor[rgb]{ .749,  .749,  .749}\textbf{2.361E-3} \\\hline\hline
        \multirow{3}[4]{*}{\texttt{PMOP13$_2$}} & 3     & \cellcolor[rgb]{ .749,  .749,  .749}\textbf{9.500E-3} & 1.090E-1 & 1.535E-1 & 7.995E-1 & 7.995E-1 & \multirow{3}[4]{*}{\texttt{PMOP14$_2$}} & 3     & 5.200E-3 & 3.360E-2 & 5.870E-2 & \cellcolor[rgb]{ .749,  .749,  .749}\textbf{0.000E+0} & \cellcolor[rgb]{ .749,  .749,  .749}\textbf{0.000E+0} \\
          & 5     & \cellcolor[rgb]{ .749,  .749,  .749}\textbf{7.700E-2} & 9.150E-2 & 4.875E-1 & 8.490E-1 & 8.490E-1 &       & 5     & \cellcolor[rgb]{ .749,  .749,  .749}\textbf{6.790E-2} & 7.910E-2 & 2.219E-1 & 2.232E-1 & 2.232E-1 \\
          & 8     & \cellcolor[rgb]{ .749,  .749,  .749}\textbf{2.711E-1} & 3.369E-1 & 8.639E-1 & 6.119E-1 & 6.119E-1 &       & 8     & \cellcolor[rgb]{ .749,  .749,  .749}\textbf{4.140E-2} & 2.001E-1 & 4.899E-1 & 1.855E-1 & 1.855E-1 \\
          & 10    & 9.156E-1 & \cellcolor[rgb]{ .749,  .749,  .749}\textbf{1.000E-2} & 1.217E+0 & 5.367E-1 & 5.367E-1 &       & 10    & 1.067E-1 & 4.182E-1 & 2.553E-1 & \cellcolor[rgb]{ .749,  .749,  .749}\textbf{3.960E-2} & \cellcolor[rgb]{ .749,  .749,  .749}\textbf{3.960E-2} \\
        \hline
    \end{tabular}
    }
\begin{tablenotes}
\item[1] \texttt{PMOP13}$_i$ and \texttt{PMOP14}$_i$ mean that the underlying test problem instance has $i\in\{1,2\}$ knee region(s).
\end{tablenotes} 
    \label{tab:PMOP1314}%
\end{table*}

\begin{figure*}[htbp]
    \includegraphics[width=1.0\linewidth]{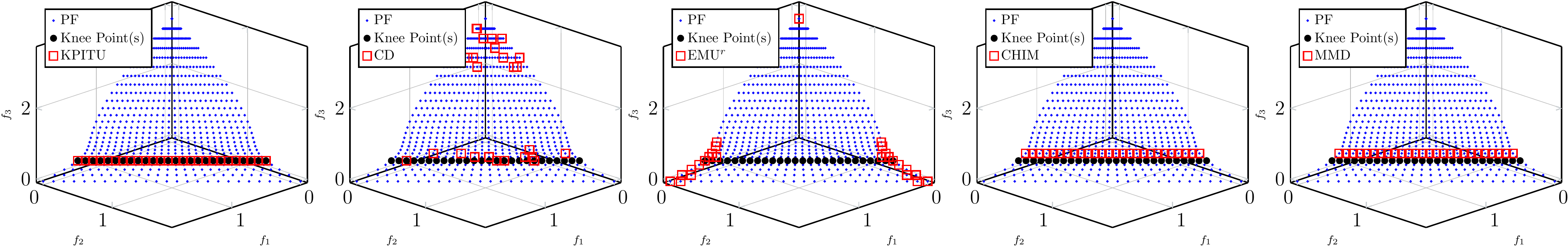}
    \caption{Knee points identified by different KPI methods on 3-objective PMOP13 with one knee region.}
    \label{fig:PMOP13_M3_A1}
\end{figure*}

\begin{figure*}[htbp]
    \includegraphics[width=1.0\linewidth]{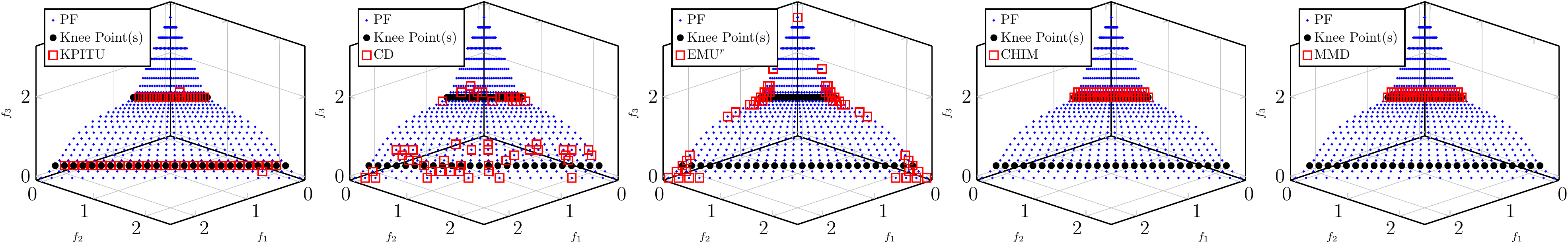}
    \caption{Knee points identified by different KPI methods on 3-objective PMOP13 with two knee regions.}
    \label{fig:PMOP13_M3_A3}
\end{figure*}

\begin{figure*}[htbp]
    \includegraphics[width=1.0\linewidth]{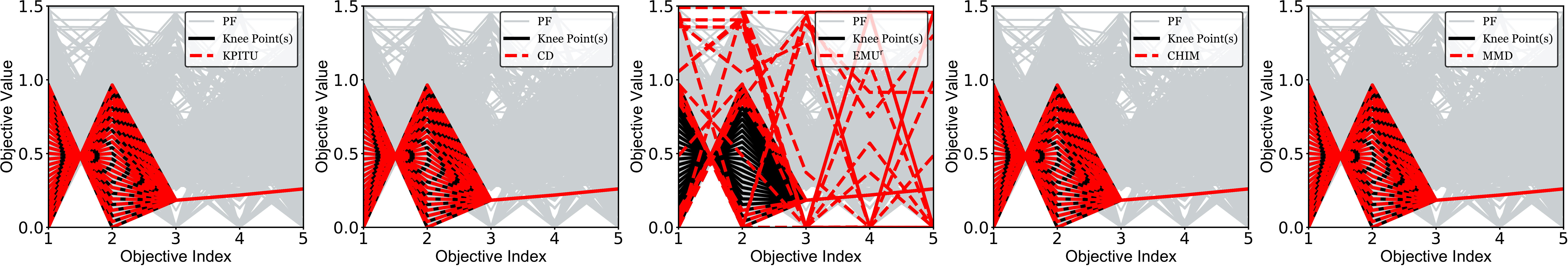}
    \caption{Knee points identified by different KPI methods on 5-objective PMOP13 with one knee region.}
    \label{fig:PMOP13_M5_A1}
\end{figure*}

\subsubsection{Incorporating KPITU in an EMO Algorithm}
\label{sec:nsga2_kpitu}

In the previous experiments, we have shown the effectiveness of KPITU as a standalone method for identifying knee point(s) from a set of trade-off alternatives. Given the simplicity of KPITU, we expect this method can be used to drive an EMO algorithm to search for knee point(s) directly. In this subsection, we use NSGA-II~\cite{DebAPM02}, one of the most popular EMO algorithms in the literature, as the baseline to validate this assertion. Our basic idea is to use KPITU to replace the crowding distance calculation in NSGA-II. \pref{alg:NSGA-II-KPITU} gives the pseudo-code of using KPITU to guide NSGA-II to search for knee point(s), dubbed $\mathtt{NSGA}$-$\mathtt{II}$-$\mathtt{KPITU}$. Specifically, let us consider the $t$-th generation of $\mathtt{NSGA}$-$\mathtt{II}$-$\mathtt{KPITU}$ where the parent population is denoted as $P_t$ and the offspring population is denoted as $Q_t$ (both of them have the same size $N$). The environmental selection first uses the non-dominated sorting to divide the hybrid population of the parents and offspring into several non-domination levels ($F_1$, $F_2$ and so on). Thereafter, solutions in the first several levels have a higher priority to be chosen to construct the next parent population until its size is equal to $N$ or for the first time exceeds $N$ (lines 5 to 7). Let us denote the last acceptable non-domination level as $F_l$. Instead of calculating the crowding distance as in the original NSGA-II, here we use $\mathtt{KPITU}(F_l)$ shown in~\pref{alg:KPITU} to identify the \textit{potential} knee point(s) (denoted as $\mathrm{K}$) from $F_l$. If the solutions in $\mathrm{K}$ have already filled the next parent population, the environmental selection at the $t$-th generation terminates (lines 14 and 15). On the other hand, if the size of $\mathrm{K}$ is smaller than the size of the remaining slot, we will use KPITU to identify the knee point(s) from $F_l\setminus\mathrm{K}$ (lines 19 to 28); otherwise, we use $\mathtt{Sort}(\mathrm{K})$ to sort the solutions in $\mathrm{K}$ according to their accumulative trade-off utility and the first several ones will be used to fill the remaining slot (lines 16 to 18).

In our experiments, we run $\mathtt{NSGA}$-$\mathtt{II}$-$\mathtt{KPITU}$ on all benchmark test problems considered in~\pref{sec:global_knee_results} and~\pref{sec:local_knee_results}, where $m\in\{2,3,5,8,10\}$. Three knee point driven algorithms, i.e., $\mathtt{NSGA}$-$\mathtt{II}$-$\mathtt{CD}$~\cite{Ramirez-Atencia17}, $\mathtt{NSGA}$-$\mathtt{II}$-$\mathtt{EMU}$~\cite{YuJO19a} and $\mathtt{NSGA}$-$\mathtt{II}$-$\mathtt{CHIM}$~\cite{YuJO19a}, are used as peer algorithms for comparison purpose. The population size is set to 100, 120, 160, 200 and 600 for different number of objectives, respectively. The settings of the number of generations can be found in Table 1 the supplementary document. As in~\pref{sec:global_knee_results} and~\pref{sec:local_knee_results}, the $\mathbb{I}(\mathrm{S})$ is used as the performance indicator. To have a statistical sound comparison, each algorithm is run for 31 times and the Wilcoxon's rank sum test with a 5\% significance level is applied to validate the statistical significance of the better/worse results achieved by $\mathtt{NSGA}$-$\mathtt{II}$-$\mathtt{KPITU}$. The median and interquartile range (IQR) of $\mathbb{I}(\mathrm{S})$ achieved by different algorithms are shown in~\pref{tab:nsga2_global} to~\pref{tab:nsga2_infinite}. From these results, we can see that our our proposed NSGA-II-KPITU is the most competitive one to search for knee point(s). In particular, it obtains the best $\mathbb{I}(\mathrm{S})$ median values in 95 out of 134 ($\approx$ 71\%) test problem instances whilst the second best algorithm is $\mathtt{NSGA}$-$\mathtt{II}$-$\mathtt{EMU}$ who obtains the best $\mathbb{I}(\mathrm{S})$ median values in 38 out of 134 ($\approx$ 28\%) test problem instances. \pref{fig:knea_PMOP7_M3_A1} to \pref{fig:knea_PMOP5_M10_A1} present the non-dominated solutions found by $\mathtt{NSGA}$-$\mathtt{II}$-$\mathtt{KPITU}$ and the other three peer algorithms whilst the complete results are put in the supplementary document. From these figures, we can see that although the $\mathbb{I}(\mathrm{S})$ values obtained by $\mathtt{NSGA}$-$\mathtt{II}$-$\mathtt{EMU}$ are competitive, its solutions tend to approximate the whole PF. In contrast, solutions obtained by the other three algorithms mainly focus on the knee point(s) whilst those obtained by $\mathtt{NSGA}$-$\mathtt{II}$-$\mathtt{CD}$ and $\mathtt{NSGA}$-$\mathtt{II}$-$\mathtt{CHIM}$ have some offsets with respect to the targeted knee point(s).

\begin{algorithm}[t]
    \caption{The $t$-th generation of $\mathtt{NSGA}$-$\mathtt{II}$-$\mathtt{KPITU}$}
    \label{alg:NSGA-II-KPITU}
    \KwIn{parent population $P_t$}
    \KwOut{$P_{t+1}$}
    $S_t\leftarrow\emptyset$,$i\leftarrow 1$;\\
    Use crossover and mutation to generate $Q_t$;\\
    $R_t\leftarrow P_t\cup Q_t$;\\
    Use non-dominated sorting to divide $R_t$ into several non-domination levels $F_1$, $F_2$, $\cdots$;\\
    \Repeat{$|S_t|\geq N$}{
        $S_t\leftarrow S_t\cup F_i$, $i\leftarrow i+1$;
    }
    $l\leftarrow i-1$;\\
    \uIf{$|S_t|==N$}{
        $P_{t+1}\leftarrow S_t$;
    }\Else{
        $P_{t+1}\leftarrow\cup_{i=1}^{l-1}F_i$;\\
        $\mathrm{K}\leftarrow\mathtt{KPITU}(F_l)$;\\
        \uIf{$|\mathrm{K}|==N-|P_{t+1}|$}{
            $P_{t+1}\leftarrow P_{t+1}\cup\mathrm{K}$;
        }\uElseIf{$|\mathrm{K}|>N-|P_{t+1}|$}{
            $F_l\leftarrow$ the first $N-|P_{t+1}|$ solutions in $\mathrm{K}$;\\
            $P_{t+1}\leftarrow P_{t+1}\cup F_l$;
        }\Else{
            \Repeat{$|\mathrm{K}|\geq N-|P_{t+1}|$}{
                $P_{t+1}\leftarrow P_{t+1}\cup\mathrm{K}$, $F_l\leftarrow F_l\setminus\mathrm{K}$;\\
                $\mathrm{K}\leftarrow\mathtt{KPITU}(F_l)$;
            }
            \uIf{$|\mathrm{K}|==N-|P_{t+1}|$}{
                $P_{t+1}\leftarrow P_{t+1}\cup\mathrm{K}$;
            }\Else{
                $F_l\leftarrow$ the first $N-|P_{t+1}|$ solutions in $\mathrm{K}$;\\
                $P_{t+1}\leftarrow P_{t+1}\cup F_l$;
            }
        }
    }
    \Return $P_{t+1}$
\end{algorithm}

\begin{table*}[htbp]
  \centering
  \caption{Comparison Results of $\mathbb{I}(\mathrm{S})$ Metric Values Obtained by $\mathtt{NSGA}$-$\mathtt{II}$-$\mathtt{KPITU}$ and Other Three Other Peer Algorithms on Problems with Only One Global Knee Point}
  \resizebox{\textwidth}{!}{
  \begin{threeparttable}
    \begin{tabular}{c|c|cccc||c|c|cccc}
	\cline{2-6}\cline{8-12}
          & $m$     & \texttt{KPITU} & \texttt{CD}    & \texttt{EMU}   & \texttt{CHIM}  &       & $m$     & \texttt{KPITU} & \texttt{CD}    & \texttt{EMU}   & \texttt{CHIM} \\
	\hline
    \texttt{DO2DK} & 2     & 1.711E-1(2.19E-4) & 5.126E-1(1.83E-3)$^{\dag}$ & \cellcolor[rgb]{ .753,  .753,  .753}\textbf{4.047E-4(6.42E-4)$^{\ddag}$} & 2.819E-1(2.34E+0)$^{\dag}$ &  \texttt{CKP}   & 2     & 4.995E-2(3.22E-4) & 6.691E-1(2.93E-3)$^{\dag}$ & \cellcolor[rgb]{ .753,  .753,  .753}\textbf{2.355E-2(3.03E-2)} & 2.692E-2(3.96E-2)$^{\dag}$ \\
    \hline
    \hline
     \texttt{DEB2DK} & 2     & \cellcolor[rgb]{ .753,  .753,  .753}\textbf{3.741E-5(1.13E-4)} & 3.780E-1(1.45E-3)$^{\dag}$ & 1.112E-3(1.51E-3)$^{\dag}$ & 3.522E-2(8.56E-2)$^{\dag}$ &  \texttt{DEB3DK} & 3     & \cellcolor[rgb]{ .753,  .753,  .753}\textbf{5.545E-2(6.14E-4)} & 1.605E-1(1.76E-2)$^{\dag}$ & 5.902E-2(2.44E-2) & 4.545E+0(7.71E-4)$^{\dag}$ \\
    \hline
    \hline
    \multirow{5}[2]{*}{\texttt{PMOP1}} & 2     & \cellcolor[rgb]{ .753,  .753,  .753}\textbf{1.052E-5(1.70E-5)} & 4.668E-2(3.26E-4)$^{\dag}$ & 8.274E-5(1.46E-4)$^{\dag}$ & 3.535E-3(6.06E-3)$^{\dag}$ & \multirow{5}[2]{*}{\texttt{PMOP7}} & 2     & \cellcolor[rgb]{ .753,  .753,  .753}\textbf{3.508E-5(7.10E-5)} & 2.707E+0(7.43E-3)$^{\dag}$ & 9.626E-4(9.40E-4)$^{\dag}$ & 3.588E-1(5.25E-1)$^{\dag}$ \\
          & 3     & \cellcolor[rgb]{ .753,  .753,  .753}\textbf{9.551E-3(2.76E-4)} & 8.357E-2(2.97E-3)$^{\dag}$ & 1.493E-2(1.09E-2) & 2.007E+0(1.93E-4)$^{\dag}$ &       & 3     & \cellcolor[rgb]{ .753,  .753,  .753}\textbf{8.067E-3(1.65E-4)} & 1.105E+0(2.11E-2)$^{\dag}$ & 3.717E-2(2.22E-2)$^{\dag}$ & 1.696E+0(6.17E-7)$^{\dag}$ \\
          & 5     & \cellcolor[rgb]{ .753,  .753,  .753}\textbf{3.254E-2(3.28E-3)} & 8.087E-2(9.18E-5) & 1.083E+0(6.23E-1)$^{\dag}$ & 2.726E+0(4.04E-4)$^{\dag}$ &       & 5     & \cellcolor[rgb]{ .753,  .753,  .753}\textbf{4.033E-2(2.81E-5)} & 5.320E-1(5.61E-2)$^{\dag}$ & 4.142E-1(2.82E-1)$^{\dag}$ & 1.195E+0(2.54E-1)$^{\dag}$ \\
          & 8     & \cellcolor[rgb]{ .753,  .753,  .753}\textbf{1.093E-1(7.70E-5)} & 2.587E-1(1.34E-1)$^{\dag}$ & 2.162E+0(8.94E-1)$^{\dag}$ & 3.507E+0(2.20E+0)$^{\dag}$ &       & 8     & \cellcolor[rgb]{ .753,  .753,  .753}\textbf{3.721E-2(3.57E-5)} & 5.513E-1(1.21E-1)$^{\dag}$ & 1.887E-1(6.74E-2)$^{\dag}$ & 1.147E+0(5.57E-8)$^{\dag}$ \\
          & 10    & \cellcolor[rgb]{ .753,  .753,  .753}\textbf{1.463E-1(3.52E-4)} & 6.014E-1(3.41E-1)$^{\dag}$ & 2.510E+0(4.38E-1)$^{\dag}$ & 2.951E+0(2.75E+0)$^{\dag}$ &       & 10    & \cellcolor[rgb]{ .753,  .753,  .753}\textbf{3.921E-2(6.41E-5)} & 3.000E-1(2.87E-1)$^{\dag}$ & 2.343E-1(5.27E-2)$^{\dag}$ & 1.224E+0(1.14E-7)$^{\dag}$ \\
    \hline
    \hline
    \multirow{5}[2]{*}{\texttt{PMOP2}} & 2     & 4.280E-1(1.71E-10) & 4.280E-1(1.38E-10) & \cellcolor[rgb]{ .753,  .753,  .753}\textbf{4.299E-3(7.39E-3)$^{\ddag}$} & 3.016E-1(4.17E-2)$^{\dag}$ & \multirow{5}[2]{*}{\texttt{PMOP8}} & 2     & \cellcolor[rgb]{ .753,  .753,  .753}\textbf{4.532E-6(1.12E-5)} & 1.322E-1(3.60E-4)$^{\dag}$ & 2.773E-4(4.84E-4)$^{\dag}$ & 5.589E-3(2.66E-2)$^{\dag}$ \\
          & 3     & 4.444E-1(2.19E-9) & 4.444E-1(3.71E-10)$^{\dag}$ & \cellcolor[rgb]{ .753,  .753,  .753}\textbf{4.554E-2(6.61E-2)$^{\ddag}$} & 4.443E-1(8.04E-2)$^{\dag}$ &       & 3     & \cellcolor[rgb]{ .753,  .753,  .753}\textbf{7.872E-3(4.98E-3)} & 1.618E-1(3.95E-3)$^{\dag}$ & 9.839E-2(3.06E-4)$^{\dag}$ & 1.869E-1(4.53E+0)$^{\dag}$ \\
          & 5     & 6.946E-1(1.05E-9) & 5.193E-1(6.70E-8)$^{\dag}$ & \cellcolor[rgb]{ .753,  .753,  .753}\textbf{1.540E-1(7.34E-2)$^{\ddag}$} & 7.224E-1(3.05E-2)$^{\dag}$ &       & 5     & \cellcolor[rgb]{ .753,  .753,  .753}\textbf{1.526E-1(1.99E-1)} & 2.599E-1(1.44E-2)$^{\dag}$ & 2.247E-1(6.14E-5) & 7.042E+0(4.33E-1)$^{\dag}$ \\
          & 8     & 5.992E-1(1.34E-9) & 6.747E-1(1.12E-5)$^{\dag}$ & \cellcolor[rgb]{ .753,  .753,  .753}\textbf{1.871E-1(4.02E-2)$^{\ddag}$} & 6.234E-1(1.52E-4)$^{\dag}$ &       & 8     & \cellcolor[rgb]{ .753,  .753,  .753}\textbf{3.892E-1(2.02E-4)} & 7.848E-1(5.12E-2)$^{\dag}$ & 4.431E-1(2.05E-2)$^{\dag}$ & 2.101E+1(2.89E-6)$^{\dag}$ \\
          & 10    & 5.771E-1(3.59E-9) & 6.497E-1(2.84E-2)$^{\dag}$ & \cellcolor[rgb]{ .753,  .753,  .753}\textbf{1.963E-1(4.81E-2)$^{\ddag}$} & 6.012E-1(9.06E-3)$^{\dag}$ &       & 10    & \cellcolor[rgb]{ .753,  .753,  .753}\textbf{1.820E+0(1.01E-1)} & 1.895E+0(2.01E-1) & 5.006E+0(3.90E+0)$^{\dag}$ & 4.919E+0(1.22E-1)$^{\dag}$ \\
    \hline
    \hline
    \multirow{5}[2]{*}{\texttt{PMOP3}} & 2     & \cellcolor[rgb]{ .753,  .753,  .753}\textbf{2.358E-5(1.51E-4)} & 1.221E+0(9.62E-4)$^{\dag}$ & 1.301E-3(2.20E-1)$^{\dag}$ & 9.719E-1(8.65E-1)$^{\dag}$ & \multirow{5}[2]{*}{\texttt{PMOP9}} & 2     & 8.774E-1(2.49E-7) & 8.774E-1(4.91E-5)$^{\dag}$ & \cellcolor[rgb]{ .753,  .753,  .753}\textbf{7.763E-1(6.46E-2)$^{\ddag}$} & 8.658E-1(2.08E-2)$^{\dag}$ \\
          & 3     & 5.079E-1(3.38E-4) & 4.792E-1(4.51E-1) & \cellcolor[rgb]{ .753,  .753,  .753}\textbf{1.179E-1(7.00E-2)$^{\ddag}$} & 2.460E+0(1.00E+0)$^{\dag}$ &       & 3     & \cellcolor[rgb]{ .753,  .753,  .753}\textbf{1.699E-6(1.22E-5)} & 6.799E-3(5.00E-5)$^{\dag}$ & 7.577E-5(1.14E-4)$^{\dag}$ & 4.983E-4(9.04E-4)$^{\dag}$ \\
          & 5     & \cellcolor[rgb]{ .753,  .753,  .753}\textbf{4.189E-1(4.62E-5)} & 4.907E-1(1.79E-2)$^{\dag}$ & 4.523E-1(3.07E-1)$^{\dag}$ & 2.113E+0(4.12E-1)$^{\dag}$ &       & 5     & \cellcolor[rgb]{ .753,  .753,  .753}\textbf{9.793E-3(7.33E-4)} & 1.083E-2(1.09E-4)$^{\dag}$ & 1.081E-2(7.89E-6)$^{\dag}$ & 1.074E-2(1.83E-1)$^{\dag}$ \\
          & 8     & 6.482E-1(2.34E-5) & 6.022E-1(2.08E-2)$^{\dag}$ & \cellcolor[rgb]{ .753,  .753,  .753}\textbf{2.842E-1(2.35E-1)$^{\ddag}$} & 1.873E+0(4.21E-1)$^{\dag}$ &       & 8     & \cellcolor[rgb]{ .753,  .753,  .753}\textbf{7.419E-3(1.76E-6)} & 5.860E-2(5.22E-2)$^{\dag}$ & 8.757E-2(3.59E-2)$^{\dag}$ & 1.471E-1(2.28E-1)$^{\dag}$ \\
          & 10    & 6.506E-1(1.32E-5) & 6.190E-1(1.29E-2)$^{\dag}$ & \cellcolor[rgb]{ .753,  .753,  .753}\textbf{3.874E-1(1.91E-1)$^{\ddag}$} & 1.820E+0(4.82E-1)$^{\dag}$ &       & 10    & \cellcolor[rgb]{ .753,  .753,  .753}\textbf{4.298E-1(3.32E-7)} & 5.297E-1(8.17E-5)$^{\dag}$ & 4.305E-1(4.30E-2)$^{\dag}$ & 5.212E-1(3.28E-3)$^{\dag}$ \\
    \hline
    \hline
    \multirow{5}[2]{*}{\texttt{PMOP4}} & 2     & \cellcolor[rgb]{ .753,  .753,  .753}\textbf{7.776E-3(1.19E-2)} & 1.253E-2(3.67E-2) & 2.723E-2(3.85E-2)$^{\dag}$ & 1.036E-2(2.90E-2) & \multirow{5}[2]{*}{\texttt{PMOP11}} & 2     & 2.397E-2(2.58E-5) & \cellcolor[rgb]{ .753,  .753,  .753}\textbf{3.215E-5(5.64E-5)$^{\ddag}$} & 2.129E-3(3.08E-3)$^{\dag}$ & 2.759E-2(8.23E-3) \\
          & 3     & \cellcolor[rgb]{ .753,  .753,  .753}\textbf{1.433E-2(1.42E-1)} & 2.802E+0(1.38E+0)$^{\dag}$ & 8.058E-1(5.27E-1)$^{\dag}$ & 9.575E-1(1.66E+0)$^{\dag}$ &       & 3     & 2.679E-2(7.69E-5) & 4.651E-2(7.41E-3)$^{\dag}$ & \cellcolor[rgb]{ .753,  .753,  .753}\textbf{4.077E-3(3.75E-3)$^{\ddag}$} & 1.976E-1(6.15E-5)$^{\dag}$ \\
          & 5     & \cellcolor[rgb]{ .753,  .753,  .753}\textbf{6.565E-1(1.58E-1)} & 7.145E-1(2.43E-1)$^{\dag}$ & 5.308E+0(2.08E+0)$^{\dag}$ & 2.389E+0(2.72E+0)$^{\dag}$ &       & 5     & 1.111E-2(1.09E-5) & 1.637E-2(1.72E-1)$^{\dag}$ & \cellcolor[rgb]{ .753,  .753,  .753}\textbf{8.390E-3(8.00E-3)} & 8.442E-2(4.71E-7)$^{\dag}$ \\
          & 8     & \cellcolor[rgb]{ .753,  .753,  .753}\textbf{3.656E+0(4.01E-1)} & 5.458E+0(4.60E+0)$^{\dag}$ & 1.179E+1(4.08E+0)$^{\dag}$ & 8.519E+0(5.89E+0)$^{\dag}$ &       & 8     & \cellcolor[rgb]{ .753,  .753,  .753}\textbf{1.442E-3(1.04E-6)} & 1.683E-3(7.70E-4)$^{\dag}$ & 6.197E-3(1.46E-2)$^{\dag}$ & 1.245E-2(1.43E-9)$^{\dag}$ \\
          & 10    & \cellcolor[rgb]{ .753,  .753,  .753}\textbf{3.020E+0(9.28E-1)} & 6.417E+0(1.41E-1)$^{\dag}$ & 2.188E+1(1.89E+1)$^{\dag}$ & 1.522E+1(8.29E+0)$^{\dag}$ &       & 10    & \cellcolor[rgb]{ .753,  .753,  .753}\textbf{3.288E-4(4.01E-7)} & 3.786E-4(1.08E-4)$^{\dag}$ & 5.970E-3(5.23E-3)$^{\dag}$ & 2.909E-3(1.63E-9)$^{\dag}$ \\
    \hline
    \hline
    \multirow{5}[2]{*}{\texttt{PMOP6}} & 2     & \cellcolor[rgb]{ .753,  .753,  .753}\textbf{3.262E-6(1.97E-5)} & 6.100E-2(5.09E-2)$^{\dag}$ & 1.270E-4(2.66E-4)$^{\dag}$ & 3.417E-3(5.11E-3)$^{\dag}$ & \multirow{5}[2]{*}{\texttt{PMOP12}} & 2     & \cellcolor[rgb]{ .753,  .753,  .753}\textbf{1.360E-5(3.20E-5)} & 2.884E-1(6.67E-4)$^{\dag}$ & 4.549E-4(6.09E-4)$^{\dag}$ & 3.095E-1(3.04E-1)$^{\dag}$ \\
          & 3     & \cellcolor[rgb]{ .753,  .753,  .753}\textbf{8.298E-3(6.04E-5)} & 4.629E-2(2.48E-1)$^{\dag}$ & 1.205E-2(1.35E-3)$^{\dag}$ & 7.946E-1(3.46E-1)$^{\dag}$ &       & 3     & 4.141E-2(5.22E-5) & 7.941E-3(1.57E-3)$^{\dag}$ & \cellcolor[rgb]{ .753,  .753,  .753}\textbf{6.692E-3(4.21E-3)$^{\ddag}$} & 3.623E-1(4.44E-2)$^{\dag}$ \\
          & 5     & \cellcolor[rgb]{ .753,  .753,  .753}\textbf{2.055E-2(2.38E-5)} & 2.102E-2(6.21E-1)$^{\dag}$ & 4.840E-1(2.82E-1)$^{\dag}$ & 1.604E+0(1.23E+0)$^{\dag}$ &       & 5     & \cellcolor[rgb]{ .753,  .753,  .753}\textbf{1.137E-2(2.98E-5)} & 1.249E-2(2.65E-3) & 1.551E-2(7.21E-3)$^{\dag}$ & 1.133E-1(5.34E-2)$^{\dag}$ \\
          & 8     & \cellcolor[rgb]{ .753,  .753,  .753}\textbf{1.195E-1(1.88E-5)} & 7.942E-1(1.08E+0) & 1.681E+0(7.77E-1)$^{\dag}$ & 2.375E+0(3.51E+0)$^{\dag}$ &       & 8     & \cellcolor[rgb]{ .753,  .753,  .753}\textbf{5.414E-3(4.66E-5)} & 6.920E-3(6.23E-4)$^{\dag}$ & 1.065E-2(3.87E-3)$^{\dag}$ & 5.026E-2(3.86E-5)$^{\dag}$ \\
          & 10    & \cellcolor[rgb]{ .753,  .753,  .753}\textbf{5.754E-1(5.46E-5)} & 2.780E+0(1.67E+0) & 3.873E+0(4.08E+0)$^{\dag}$ & 1.072E+1(8.49E+0)$^{\dag}$ &       & 10    & \cellcolor[rgb]{ .753,  .753,  .753}\textbf{5.193E-3(2.70E-4)} & 5.213E-3(7.48E-4) & 8.158E-3(4.72E-3)$^{\dag}$ & 3.648E-2(2.72E-5)$^{\dag}$ \\
    \hline
    \end{tabular}%
    
    \begin{tablenotes}
        \item[1] $^{\dag}$ denotes the performance of $\mathtt{NSGA}$-$\mathtt{II}$-$\mathtt{KPITU}$ is significantly better than the other peers according to the Wilcoxon's rank sum test at a 0.05 significance level whilst $^{\ddag}$ denotes the opposite case.
      \end{tablenotes}
    \end{threeparttable}}
   
  \label{tab:nsga2_global}
\end{table*}

\begin{table*}[htbp]
  \centering
  \caption{Comparison Results of $\mathbb{I}(\mathrm{S})$ Metric Values Obtained by $\mathtt{NSGA}$-$\mathtt{II}$-$\mathtt{KPITU}$ and Other Three Other Peer Algorithms on Problems with Local Knee Points}
  \resizebox{\textwidth}{!}{
  \begin{threeparttable}
    \begin{tabular}{c|c|cccc||c|c|cccc}
    \cline{2-6}\cline{8-12}
          & $m$     & \texttt{KPITU} & \texttt{CD}    & \texttt{EMU}   & \texttt{CHIM}  &       & $m$     & \texttt{KPITU} & \texttt{CD}    & \texttt{EMU}   & \texttt{CHIM} \\
    \hline
    \texttt{DO2DK} & 2     & 1.256E-2(1.26E-4) & 1.473E+0(1.03E-3)$^{\dag}$ & \cellcolor[rgb]{ .753,  .753,  .753}\textbf{6.668E-3(5.11E-3)$^{\ddag}$} & 1.533E+0(2.05E-1)$^{\dag}$ & \texttt{CKP}   & 2     & 1.533E+0(1.39E-3) & 1.802E+0(3.37E-4)$^{\dag}$ & \cellcolor[rgb]{ .753,  .753,  .753}\textbf{5.803E-2(9.54E-2)$^{\ddag}$} & 2.043E+0(1.63E-1)$^{\dag}$ \\
    \hline
    \hline
    \texttt{DEB2DK} & 2     & \cellcolor[rgb]{ .753,  .753,  .753}\textbf{1.783E-2(4.23E-4)} & 1.787E+0(3.07E-4)$^{\dag}$ & 1.928E-2(3.01E-2)$^{\dag}$ & 1.840E+0(1.65E-1)$^{\dag}$ & \texttt{DEB3DK} & 3     & \cellcolor[rgb]{ .753,  .753,  .753}\textbf{4.102E-1(2.33E-2)} & 2.449E+0(1.07E-1)$^{\dag}$ & 7.976E-1(6.46E-1) & 4.628E+0(4.81E-4)$^{\dag}$ \\
    \hline
    \hline
    \multirow{5}[2]{*}{\texttt{PMOP1}} & 2     & \cellcolor[rgb]{ .753,  .753,  .753}\textbf{4.744E-4(1.18E-4)} & 5.850E-1(6.05E-4)$^{\dag}$ & 4.014E-3(3.37E-3)$^{\dag}$ & 4.996E-1(2.92E-2)$^{\dag}$ & \multirow{5}[2]{*}{\texttt{PMOP7}} & 2     & \cellcolor[rgb]{ .753,  .753,  .753}\textbf{4.168E-3(1.59E-6)} & 2.365E+0(8.37E-4)$^{\dag}$ & 5.648E-3(1.16E-2)$^{\dag}$ & 1.481E+0(6.29E-2)$^{\dag}$ \\
          & 3     & \cellcolor[rgb]{ .753,  .753,  .753}\textbf{5.546E-3(1.62E-4)} & 8.955E-1(2.01E-2)$^{\dag}$ & 1.357E-1(2.18E-1)$^{\dag}$ & 2.012E+0(2.71E-4)$^{\dag}$ &       & 3     & \cellcolor[rgb]{ .753,  .753,  .753}\textbf{6.227E-2(2.96E-4)} & 1.078E+0(1.06E-1)$^{\dag}$ & 8.634E-2(4.80E-2) & 1.314E+0(2.15E-6)$^{\dag}$ \\
          & 5     & \cellcolor[rgb]{ .753,  .753,  .753}\textbf{9.937E-1(3.58E-1)} & 1.642E+0(1.22E-1)$^{\dag}$ & 1.182E+0(4.02E-1) & 3.083E+0(1.35E+0)$^{\dag}$ &       & 5     & \cellcolor[rgb]{ .753,  .753,  .753}\textbf{1.847E-1(7.19E-3)} & 8.088E-1(1.38E-1)$^{\dag}$ & 2.047E-1(6.37E-2) & 9.349E-1(1.60E-1)$^{\dag}$ \\
          & 8     & \cellcolor[rgb]{ .753,  .753,  .753}\textbf{6.162E-1(2.12E-1)} & 3.178E+0(4.93E-1)$^{\dag}$ & 2.393E+0(8.20E-1)$^{\dag}$ & 5.704E+0(2.25E+0)$^{\dag}$ &       & 8     & \cellcolor[rgb]{ .753,  .753,  .753}\textbf{1.514E-1(1.21E-2)} & 7.629E-1(9.13E-2)$^{\dag}$ & 1.830E-1(5.50E-2) & 8.202E-1(1.02E-1)$^{\dag}$ \\
          & 10    & \cellcolor[rgb]{ .753,  .753,  .753}\textbf{1.417E+0(3.19E-1)} & 3.931E+0(7.98E-1)$^{\dag}$ & 1.854E+0(3.73E-1) & 4.407E+0(2.33E+0)$^{\dag}$ &       & 10    & \cellcolor[rgb]{ .753,  .753,  .753}\textbf{1.743E-1(3.92E-2)} & 6.880E-1(7.16E-2)$^{\dag}$ & 2.146E-1(9.06E-2) & 7.930E-1(1.37E-1)$^{\dag}$ \\
    \hline
    \hline
    \multirow{5}[2]{*}{\texttt{PMOP2}} & 2     & 2.952E-1(1.74E-7) & 5.783E-1(1.56E-4)$^{\dag}$ & \cellcolor[rgb]{ .753,  .753,  .753}\textbf{8.646E-3(8.83E-3)$^{\ddag}$} & 3.685E-1(5.80E-2)$^{\dag}$ & \multirow{5}[2]{*}{\texttt{PMOP8}} & 2     & 1.303E-2(5.23E-5) & 6.232E-1(2.26E-4)$^{\dag}$ & \cellcolor[rgb]{ .753,  .753,  .753}\textbf{5.996E-3(3.70E-3)$^{\ddag}$} & 7.343E-1(3.11E-2)$^{\dag}$ \\
          & 3     & 4.505E-1(1.15E-7) & 6.521E-1(7.03E-3)$^{\dag}$ & \cellcolor[rgb]{ .753,  .753,  .753}\textbf{5.136E-2(2.83E-2)$^{\ddag}$} & 5.695E-1(4.00E-2)$^{\dag}$ &       & 3     & 5.833E-1(7.48E-3) & 5.988E-1(3.35E-3)$^{\dag}$ & \cellcolor[rgb]{ .753,  .753,  .753}\textbf{3.982E-1(5.44E-3)$^{\ddag}$} & 2.217E+0(2.50E-3)$^{\dag}$ \\
          & 5     & 2.112E-1(2.72E-9) & 6.726E-1(1.61E-3)$^{\dag}$ & \cellcolor[rgb]{ .753,  .753,  .753}\textbf{1.916E-1(4.33E-2)} & 4.731E-1(1.43E-1) &       & 5     & \cellcolor[rgb]{ .753,  .753,  .753}\textbf{6.746E-1(2.32E-5)} & 7.826E-1(3.68E-3)$^{\dag}$ & 6.970E-1(4.27E-1)$^{\dag}$ & 7.631E-1(1.46E-4)$^{\dag}$ \\
          & 8     & \cellcolor[rgb]{ .753,  .753,  .753}\textbf{1.135E-1(2.36E-9)} & 5.453E-1(1.81E-2)$^{\dag}$ & 1.955E-1(4.03E-2)$^{\dag}$ & 4.244E-1(6.03E-2)$^{\dag}$ &       & 8     & \cellcolor[rgb]{ .753,  .753,  .753}\textbf{4.067E-1(3.13E-5)} & 7.138E-1(7.57E-3)$^{\dag}$ & 5.834E-1(2.85E-1)$^{\dag}$ & 8.778E-1(5.09E-7)$^{\dag}$ \\
          & 10    & \cellcolor[rgb]{ .753,  .753,  .753}\textbf{1.315E-1(2.33E-4)} & 5.026E-1(4.15E-2)$^{\dag}$ & 1.332E-1(3.88E-2) & 4.697E-1(6.27E-2)$^{\dag}$ &       & 10    & \cellcolor[rgb]{ .753,  .753,  .753}\textbf{4.826E-1(1.12E-4)} & 1.025E+0(5.56E-3)$^{\dag}$ & 6.040E-1(1.50E-1) & 4.040E+0(3.06E-4)$^{\dag}$ \\
    \hline
    \hline
    \multirow{5}[2]{*}{\texttt{PMOP3}} & 2     & 5.103E-1(5.78E-5) & 1.580E+0(5.43E-5)$^{\dag}$ & \cellcolor[rgb]{ .753,  .753,  .753}\textbf{5.739E-3(1.69E-1)$^{\ddag}$} & 1.539E+0(3.56E-1)$^{\dag}$ & \multirow{5}[2]{*}{\texttt{PMOP9}} & 2     & 5.443E-1(9.11E-5) & 5.441E-1(4.64E-5)$^{\dag}$ & \cellcolor[rgb]{ .753,  .753,  .753}\textbf{2.236E-4(2.76E-4)$^{\ddag}$} & 5.441E-1(4.02E-3) \\
          & 3     & 8.211E-1(2.29E-4) & 1.220E+0(8.00E-2)$^{\dag}$ & \cellcolor[rgb]{ .753,  .753,  .753}\textbf{9.003E-2(7.60E-2)$^{\ddag}$} & 2.108E+0(4.17E-1)$^{\dag}$ &       & 3     & 2.760E-1(1.21E-3) & 4.450E-1(7.55E-2)$^{\dag}$ & \cellcolor[rgb]{ .753,  .753,  .753}\textbf{1.748E-2(6.94E-3)$^{\ddag}$} & 7.342E-1(3.74E-1)$^{\dag}$ \\
          & 5     & \cellcolor[rgb]{ .753,  .753,  .753}\textbf{8.040E-2(7.66E-5)} & 9.756E-1(9.24E-3)$^{\dag}$ & 3.430E-1(1.97E-1)$^{\dag}$ & 1.809E+0(9.14E-1)$^{\dag}$ &       & 5     & 2.203E-1(2.89E-5) & 4.698E-1(4.41E-3)$^{\dag}$ & \cellcolor[rgb]{ .753,  .753,  .753}\textbf{1.710E-1(5.23E-2)$^{\ddag}$} & 4.033E-1(9.92E-3)$^{\dag}$ \\
          & 8     & \cellcolor[rgb]{ .753,  .753,  .753}\textbf{1.013E-1(2.12E-5)} & 9.955E-1(3.96E-3)$^{\dag}$ & 2.575E-1(1.89E-1)$^{\dag}$ & 1.260E+0(7.46E-1)$^{\dag}$ &       & 8     & 7.014E-1(6.48E-6) & 7.910E-1(9.00E-4)$^{\dag}$ & \cellcolor[rgb]{ .753,  .753,  .753}\textbf{6.378E-1(1.27E-1)} & 7.631E-1(6.56E-1) \\
          & 10    & \cellcolor[rgb]{ .753,  .753,  .753}\textbf{1.209E-1(5.84E-6)} & 9.960E-1(1.37E-3)$^{\dag}$ & 2.079E-1(4.34E-2)$^{\dag}$ & 1.682E+0(7.98E-1)$^{\dag}$ &       & 10    & 8.901E-1(1.77E-5) & 1.287E+0(2.28E-4)$^{\dag}$ & \cellcolor[rgb]{ .753,  .753,  .753}\textbf{8.345E-1(1.31E-1)$^{\ddag}$} & 1.022E+0(1.08E-2)$^{\dag}$ \\
    \hline
    \hline
    \multirow{5}[2]{*}{\texttt{PMOP4}} & 2     & \cellcolor[rgb]{ .753,  .753,  .753}\textbf{1.640E-2(5.19E-3)} & 1.014E-1(1.19E-1)$^{\dag}$ & 1.825E-2(2.02E-2) & 3.357E-1(1.05E-2)$^{\dag}$ & \multirow{5}[2]{*}{\texttt{PMOP10}} & 2     & \cellcolor[rgb]{ .753,  .753,  .753}\textbf{5.310E-3(2.89E-6)} & 1.470E+0(4.83E-1)$^{\dag}$ & 7.706E-3(5.18E-2) & 2.292E-1(5.99E-2)$^{\dag}$ \\
          & 3     & \cellcolor[rgb]{ .753,  .753,  .753}\textbf{1.390E-1(8.96E-2)} & 2.013E+0(1.56E+0)$^{\dag}$ & 2.867E-1(7.40E-2) & 9.302E-1(7.46E-1)$^{\dag}$ &       & 3     & \cellcolor[rgb]{ .753,  .753,  .753}\textbf{4.878E-2(7.27E-5)} & 1.277E+0(4.19E-1)$^{\dag}$ & 7.711E-2(4.09E-2) & 8.525E-1(3.62E-1) \\
          & 5     & \cellcolor[rgb]{ .753,  .753,  .753}\textbf{9.060E-1(4.53E-2)} & 3.311E+0(1.17E+0)$^{\dag}$ & 1.034E+0(1.57E-2) & 2.635E+0(1.39E+0)$^{\dag}$ &       & 5     & \cellcolor[rgb]{ .753,  .753,  .753}\textbf{1.234E-1(1.41E-5)} & 1.461E-1(3.08E-3)$^{\dag}$ & 1.497E-1(1.46E-2)$^{\dag}$ & 5.695E-1(5.44E-1)$^{\dag}$ \\
          & 8     & \cellcolor[rgb]{ .753,  .753,  .753}\textbf{2.124E+0(5.50E-2)} & 7.703E+0(2.28E+0)$^{\dag}$ & 6.240E+0(5.65E-2)$^{\dag}$ & 6.007E+0(2.19E+0)$^{\dag}$ &       & 8     & \cellcolor[rgb]{ .753,  .753,  .753}\textbf{6.835E-2(8.18E-6)} & 8.871E-2(1.75E-3)$^{\dag}$ & 9.911E-2(2.36E-2)$^{\dag}$ & 9.185E-1(2.34E-7)$^{\dag}$ \\
          & 10    & \cellcolor[rgb]{ .753,  .753,  .753}\textbf{2.820E+0(7.30E-1)} & 1.335E+1(4.32E+0)$^{\dag}$ & 8.042E+0(2.95E-1)$^{\dag}$ & 1.454E+1(1.08E-2)$^{\dag}$ &       & 10    & \cellcolor[rgb]{ .753,  .753,  .753}\textbf{4.719E-2(3.85E-6)} & 6.694E-2(4.07E-4)$^{\dag}$ & 8.133E-2(1.28E-2)$^{\dag}$ & 7.044E-1(1.75E-5)$^{\dag}$ \\
    \hline
    \hline
    \multirow{5}[2]{*}{\texttt{PMOP5}} & 2     & \cellcolor[rgb]{ .753,  .753,  .753}\textbf{1.254E-2(4.35E-3)} & 1.590E-1(2.27E-2)$^{\dag}$ & 1.789E-2(3.20E-2) & 1.649E-1(2.75E-2)$^{\dag}$ & \multirow{5}[2]{*}{\texttt{PMOP11}} & 2     & \cellcolor[rgb]{ .753,  .753,  .753}\textbf{1.470E-2(2.93E-5)} & 1.969E-1(2.86E-3)$^{\dag}$ & 3.219E-2(1.59E-1) & 2.070E-1(2.86E-3)$^{\dag}$ \\
          & 3     & 3.829E-2(2.94E-2) & 1.038E-1(3.20E-2)$^{\dag}$ & \cellcolor[rgb]{ .753,  .753,  .753}\textbf{3.411E-2(3.33E-2)} & 1.556E-1(4.32E-1)$^{\dag}$ &       & 3     & 7.565E-2(1.15E-4) & 3.120E-1(2.44E-3)$^{\dag}$ & \cellcolor[rgb]{ .753,  .753,  .753}\textbf{3.731E-2(6.09E-2)$^{\ddag}$} & 3.832E-1(3.30E-3)$^{\dag}$ \\
          & 5     & \cellcolor[rgb]{ .753,  .753,  .753}\textbf{4.202E-2(6.72E-3)} & 5.912E-2(3.03E-2) & 1.228E-1(6.63E-2)$^{\dag}$ & 2.704E-1(7.59E-2)$^{\dag}$ &       & 5     & \cellcolor[rgb]{ .753,  .753,  .753}\textbf{6.965E-2(1.00E-5)} & 2.477E-1(1.55E-3)$^{\dag}$ & 8.800E-2(2.73E-2)$^{\dag}$ & 2.963E-1(3.03E-5)$^{\dag}$ \\
          & 8     & \cellcolor[rgb]{ .753,  .753,  .753}\textbf{2.231E-2(5.25E-3)} & 2.533E-2(8.54E-3) & 4.842E-2(6.13E-2)$^{\dag}$ & 1.194E-1(6.91E-2)$^{\dag}$ &       & 8     & \cellcolor[rgb]{ .753,  .753,  .753}\textbf{8.198E-2(3.78E-6)} & 1.046E-1(2.08E-2)$^{\dag}$ & 8.997E-2(1.73E-2)$^{\dag}$ & 1.286E-1(3.21E-8)$^{\dag}$ \\
          & 10    & \cellcolor[rgb]{ .753,  .753,  .753}\textbf{1.007E-2(1.82E-3)} & 1.199E-2(3.47E-4) & 1.892E-2(6.15E-2)$^{\dag}$ & 5.454E-2(2.18E-2)$^{\dag}$ &       & 10    & \cellcolor[rgb]{ .753,  .753,  .753}\textbf{2.523E-2(1.13E-5)} & 4.970E-2(7.61E-3)$^{\dag}$ & 2.652E-2(6.41E-3) & 6.226E-2(6.44E-5)$^{\dag}$ \\
    \hline
    \hline
    \multirow{5}[2]{*}{\texttt{PMOP6}} & 2     & \cellcolor[rgb]{ .753,  .753,  .753}\textbf{4.134E-3(8.91E-6)} & 8.616E-1(1.63E-2)$^{\dag}$ & 4.861E-3(2.10E-3)$^{\dag}$ & 8.427E-1(1.24E-2)$^{\dag}$ & \multirow{5}[2]{*}{\texttt{PMOP12}} & 2     & 4.026E-1(1.68E-5) & 4.004E-1(9.91E-5)$^{\dag}$ & \cellcolor[rgb]{ .753,  .753,  .753}\textbf{1.017E-3(2.80E-4)$^{\ddag}$} & 3.950E-1(1.44E-1)$^{\dag}$ \\
          & 3     & 9.777E-2(2.38E-4) & 7.560E-1(5.45E-2)$^{\dag}$ & \cellcolor[rgb]{ .753,  .753,  .753}\textbf{6.959E-2(3.00E-2)} & 1.335E+0(2.85E-1)$^{\dag}$ &       & 3     & 8.165E-2(1.88E-5) & 1.159E-1(1.25E-2) & \cellcolor[rgb]{ .753,  .753,  .753}\textbf{6.997E-3(3.61E-3)$^{\ddag}$} & 1.829E-1(4.74E-2)$^{\dag}$ \\
          & 5     & \cellcolor[rgb]{ .753,  .753,  .753}\textbf{2.297E-1(1.68E-4)} & 1.112E+0(5.64E-2)$^{\dag}$ & 5.264E-1(2.49E-1)$^{\dag}$ & 1.989E+0(1.02E-1)$^{\dag}$ &       & 5     & \cellcolor[rgb]{ .753,  .753,  .753}\textbf{8.176E-3(8.35E-5)} & 3.012E-2(1.19E-4)$^{\dag}$ & 2.280E-2(1.46E-2)$^{\dag}$ & 7.627E-2(1.12E-2)$^{\dag}$ \\
          & 8     & \cellcolor[rgb]{ .753,  .753,  .753}\textbf{1.834E+0(5.09E-1)} & 1.121E+1(3.44E+0)$^{\dag}$ & 2.145E+0(1.59E-1)$^{\dag}$ & 1.766E+1(4.78E+0)$^{\dag}$ &       & 8     & \cellcolor[rgb]{ .753,  .753,  .753}\textbf{6.208E-3(1.99E-5)} & 1.191E-2(2.23E-5)$^{\dag}$ & 9.411E-3(1.67E-3) & 2.711E-2(7.56E-5)$^{\dag}$ \\
          & 10    & 9.023E+2(2.83E+1) & 2.440E+2(1.39E+1)$^{\dag}$ & \cellcolor[rgb]{ .753,  .753,  .753}\textbf{7.069E+1(2.97E+1)} & 4.447E+2(9.24E+1)$^{\dag}$ &       & 10    & \cellcolor[rgb]{ .753,  .753,  .753}\textbf{2.071E-3(1.14E-5)} & 8.014E-3(1.64E-5)$^{\dag}$ & 3.877E-3(1.35E-3) & 1.793E-2(3.83E-5)$^{\dag}$ \\
    \hline
    \end{tabular}
    
    \begin{tablenotes}
        \item[1] $^{\dag}$ denotes the performance of $\mathtt{NSGA}$-$\mathtt{II}$-$\mathtt{KPITU}$ is significantly better than the other peers according to the Wilcoxon's rank sum test at a 0.05 significance level whilst $^{\ddag}$ denotes the opposite case.
      \end{tablenotes}
    \end{threeparttable}}
  \label{tab:nsga2_local}
\end{table*}

\begin{table*}[htbp]
  \centering
  \caption{Comparison Results of $\mathbb{I}(\mathrm{S})$ Metric Values Obtained by $\mathtt{NSGA}$-$\mathtt{II}$-$\mathtt{KPITU}$ and Other Three Other Peer Algorithms on Problems with Infinitely Many Knee Points}
  \resizebox{\textwidth}{!}{
  \begin{threeparttable}
    \begin{tabular}{c|c|cccc||c|c|cccc}
    \cline{2-6}\cline{8-12}
          & $m$     & \texttt{KPITU} & \texttt{CD}    & \texttt{EMU}   & \texttt{CHIM}  &       & $m$     & \texttt{KPITU} & \texttt{CD}    & \texttt{EMU}   & \texttt{CHIM} \\
    \hline
    \multirow{4}[2]{*}{\texttt{PMOP13}} & 3     & \cellcolor[rgb]{ .753,  .753,  .753}\textbf{1.831E-1(2.77E-4)} & 7.411E-1(1.65E-2)$^{\dag}$ & 6.508E-1(4.79E-1)$^{\dag}$ & 1.410E+0(2.58E-1)$^{\dag}$ & \multirow{4}[2]{*}{\texttt{PMOP14}} & 3     & \cellcolor[rgb]{ .753,  .753,  .753}\textbf{1.189E-1(8.23E-5)} & 2.462E-1(2.11E-3)$^{\dag}$ & 1.525E-1(2.76E-2)$^{\dag}$ & 1.089E+0(1.55E-4)$^{\dag}$ \\
          & 5     & \cellcolor[rgb]{ .753,  .753,  .753}\textbf{3.296E-1(8.94E-3)} & 1.193E+0(5.23E-3)$^{\dag}$ & 8.893E-1(5.75E-1)$^{\dag}$ & 1.487E+0(5.12E-1)$^{\dag}$ &       & 5     & 1.639E-1(2.68E-5) & 1.403E-1(5.10E-3) & \cellcolor[rgb]{ .753,  .753,  .753}\textbf{1.591E-1(2.28E-2)} & 9.933E-1(2.01E-1)$^{\dag}$ \\
          & 8     & \cellcolor[rgb]{ .753,  .753,  .753}\textbf{5.107E-1(7.52E-3)} & 1.244E+0(6.19E-2)$^{\dag}$ & 1.045E+0(8.24E-1)$^{\dag}$ & 1.721E+0(8.32E-1)$^{\dag}$ &       & 8     & \cellcolor[rgb]{ .753,  .753,  .753}\textbf{4.590E-2(1.54E-5)} & 4.746E-2(4.00E-2) & 5.470E-2(3.33E-2) & 6.300E-2(1.63E-2)$^{\dag}$ \\
          & 10    & \cellcolor[rgb]{ .753,  .753,  .753}\textbf{4.977E-1(1.94E-3)} & 9.362E-1(1.42E-1)$^{\dag}$ & 1.306E+0(5.69E-1)$^{\dag}$ & 1.613E+0(6.05E-1)$^{\dag}$ &       & 10    & \cellcolor[rgb]{ .753,  .753,  .753}\textbf{9.602E-2(2.65E-5)} & 1.029E-1(1.95E-2) & 4.445E-1(3.95E-1)$^{\dag}$ & 2.338E-1(6.21E-2)$^{\dag}$ \\
    \hline
    \hline
    \multirow{4}[2]{*}{\texttt{PMOP13}} & 3     & 3.339E-1(4.23E-4) & 1.432E+0(1.25E-2)$^{\dag}$ & \cellcolor[rgb]{ .753,  .753,  .753}\textbf{2.326E-1(1.67E-2)$^{\ddag}$} & 3.653E+0(1.09E-2)$^{\dag}$ & \multirow{4}[2]{*}{\texttt{PMOP14}} & 3     & 2.526E-1(1.40E-4) & 5.318E-1(1.69E-2)$^{\dag}$ & \cellcolor[rgb]{ .753,  .753,  .753}\textbf{1.145E-1(1.23E-2)$^{\ddag}$} & 7.794E-1(8.02E-2)$^{\dag}$ \\
          & 5     & \cellcolor[rgb]{ .753,  .753,  .753}\textbf{3.973E-1(7.52E-2)} & 1.909E+0(1.19E-2)$^{\dag}$ & 6.884E-1(3.25E-1)$^{\dag}$ & 2.448E+0(8.25E-1)$^{\dag}$ &       & 5     & \cellcolor[rgb]{ .753,  .753,  .753}\textbf{1.495E-1(9.99E-2)} & 5.409E-1(1.41E-1)$^{\dag}$ & 2.843E-1(1.46E-1)$^{\dag}$ & 5.626E-1(2.94E-1)$^{\dag}$ \\
          & 8     & \cellcolor[rgb]{ .753,  .753,  .753}\textbf{8.846E-1(1.74E-2)} & 2.436E+0(1.94E-1)$^{\dag}$ & 1.487E+0(7.99E-1) & 2.812E+0(1.37E+0)$^{\dag}$ &       & 8     & \cellcolor[rgb]{ .753,  .753,  .753}\textbf{2.981E-1(8.21E-2)} & 6.041E-1(1.24E-2)$^{\dag}$ & 3.335E-1(1.11E-1) & 5.134E-1(6.53E-2)$^{\dag}$ \\
          & 10    & \cellcolor[rgb]{ .753,  .753,  .753}\textbf{1.345E+0(4.63E-2)} & 2.564E+0(3.29E-1)$^{\dag}$ & 1.626E+0(8.14E-1) & 3.094E+0(1.68E+0)$^{\dag}$ &       & 10    & 3.621E-1(6.29E-2) & 4.926E-1(5.99E-2)$^{\dag}$ & \cellcolor[rgb]{ .753,  .753,  .753}\textbf{3.381E-1(1.40E-1)} & 5.407E-1(4.25E-2)$^{\dag}$ \\
    \hline
    \end{tabular}
    
    \begin{tablenotes}
        \item[1] $^{\dag}$ denotes the performance of $\mathtt{NSGA}$-$\mathtt{II}$-$\mathtt{KPITU}$ is significantly better than the other peers according to the Wilcoxon's rank sum test at a 0.05 significance level whilst $^{\ddag}$ denotes the opposite case.
      \end{tablenotes}
    \end{threeparttable}}
  \label{tab:nsga2_infinite}
\end{table*}

\begin{figure*}[htbp]
    \includegraphics[width=1.0\linewidth]{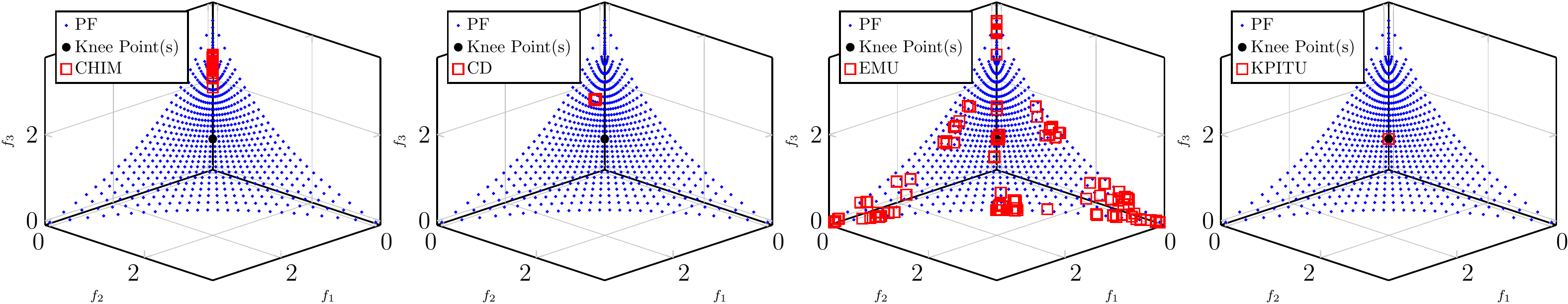}
    \caption{Population distribution of solutions found by NSGA-II-KPITU, NSGA-II-EMU, NSGA-II-CHIM and NSGA-II-CD on 3-objective PMOP7 with one global knee point.}
    \label{fig:knea_PMOP7_M3_A1}
\end{figure*}

\begin{figure*}[htbp]
    \includegraphics[width=1.0\linewidth]{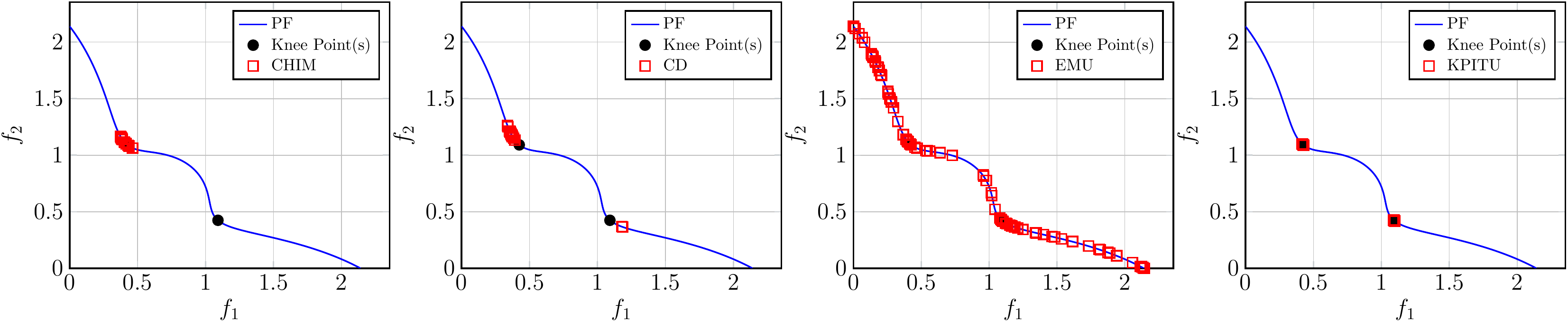}
    \caption{Population distribution of solutions found by NSGA-II-KPITU, NSGA-II-EMU, NSGA-II-CHIM and NSGA-II-CD on 2-objective PMOP1 with local knee points.}
    \label{fig:knea_PMOP1_M2_A2}
\end{figure*}

\begin{figure*}[htbp]
    \includegraphics[width=1.0\linewidth]{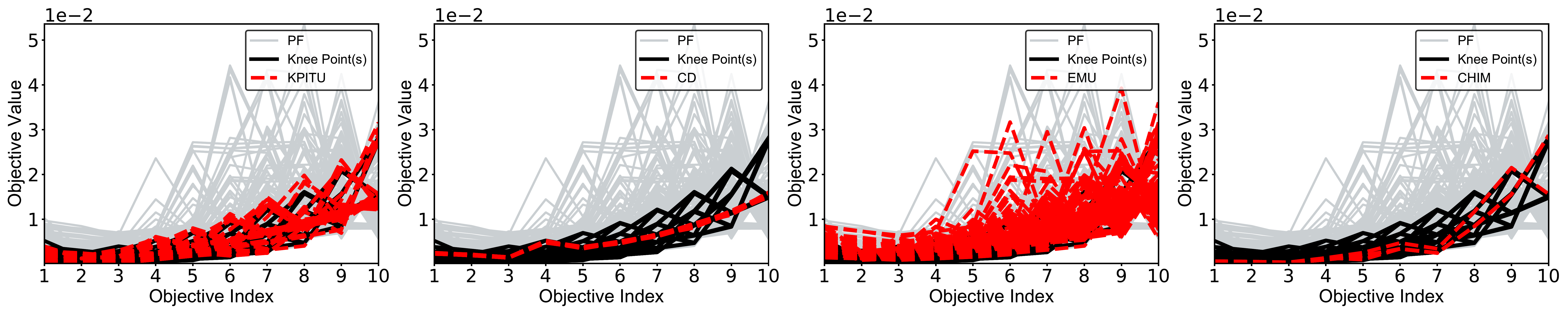}
    \caption{Population distribution of solutions found by NSGA-II-KPITU, NSGA-II-EMU, NSGA-II-CHIM and NSGA-II-CD on 10-objective PMOP5 with local knee points.}
    \label{fig:knea_PMOP5_M10_A1}
\end{figure*}



\section{Conclusion}
\label{sec:conclusion}

Knee region, where knee point(s) are located in, is a part of the PF experiencing the smallest trade-off loss at all objectives. Due to this defining characteristic, knee point(s) is(are) of a distinctive importance in MCDM. In contrast, Pareto-optimal solutions outside the knee region(s) are less attractive to DMs since a small improvement on one objective can lead to a significant degradation on at least one of the other objectives. This paper proposed a simple and effective KPI method based on trade-off utility, dubbed KPITU, to help DMs identify knee point(s) from a given set of trade-off solutions. Generally speaking, the KPI process of KPITU is mainly based on pair-wise comparisons between each solution of the trade-off solution set and solutions within its neighbourhood. In particular, a solution is a knee point if and only if it has the best trade-off utility among its neighbours. This vanilla KPI process of KPITU is independent from each solution and has a quadratic worst-case complexity. To amend this, we implement a GPU version which is able to carry out the KPI process in a parallel manner and it reduces the worst-case complexity from quadratic to linear. Experiments on 134 test problems instances clearly demonstrate the outstanding performance of KPITU against five state-of-the-art KPI methods especially on problems with many local knee points. It is worth noting that KPITU does not have any additional control parameters and is naturally scalable to any number of objectives. In addition, we validate the usefulness of KPITU as an operator to guide an EMO algorithm to search for knee point(s) by incorporating it into NSGA-II.

Supporting a better and explainable decision-making when encountering multiple conflicting objectives has become increasingly important to bridge the gap between EMO research and its wider industrial applications. As a piece of future directions, in addition to a numerical representation of raw objective function values, it is interesting and important to develop a meaningful visualisation method to assist the DM's cognitive understanding of trade-off alternatives and thus facilitate the MCDM. Furthermore, trade-off solutions obtained by EMO algorithms usually imply abundant knowledge including the relationship between objective functions and decision variables. An effective data mining over those trade-off solutions is able to uncover innovative design principles thus automate the decision-making process and improve the interpretability of the optimisation itself~\cite{Deb13}.

\noindent\textbf{Data availability:} The supplementary document and source codes can be found from our project page: \url{https://github.com/COLA-Laboratory/kpi}.

\section*{Acknowledgment}
K. Li was supported by UKRI Future Leaders Fellowship (Grant No. MR/S017062/1) and Royal Society (Grant No. IEC/NSFC/170243). X. Yao was supported by the Guangdong Provincial Key Laboratory of Brain-inspired Intelligent Computation, the Program for Guangdong Introducing Innovative and Enterpreneurial Teams (Grant No. 2017ZT07X386), Shenzhen Science and Technology Program (Grant No. KQTD2016112514355531) and the Program for University Key Laboratory of Guangdong Province (Grant No. 2017KSYS008).

\bibliographystyle{IEEEtran}
\bibliography{IEEEabrv,knee}

\end{document}